\def\E{{\mathbb E }}
\def\eps{{\varepsilon}}
\def\Bbb E{\mathbb{E}}
\def\Bbb R{\mathbb{R}}
\newtheorem{example}{Example}
\newtheorem{corollary}{Corollary}[section]
\makeatletter \@addtoreset{equation}{section}
\newtheorem{lemma}{Lemma}[section]
\newtheorem{theorem}{Theorem}[section]
\newtheorem{proposition}{Proposition}[section]
\newtheorem{remark}{Remark}[section]
\newcommand\inner[2]{\langle #1, #2 \rangle}
\font\tencmmib=cmmib10 \skewchar\tencmmib '60
\font\tenmsb=msbm10 
\def\Bbb#1{\hbox{\tenmsb#1}}
\def\lessim{\ \lower4pt\hbox{$
\buildrel{\displaystyle <}\over\sim$}\ }
\def\gessim{\ \lower4pt\hbox{$\buildrel{\displaystyle >}
\over\sim$}\ }
\def\eps{\varepsilon}
\def\go0{\to 0}
\def\leftitem#1{\item{\hbox to\parindent{\enspace#1\hfill}}}
\def\QED{{$\hfill \bbox$}}
\def\sg{\sigma}
\def\sg2{\sigma^2}
\def\__{_{\infty}}
\def \EB{E}
\def \T {\mathcal{T}}
\def \B {\mathcal{B}}
\def \I {\mathcal{I}}
\def \Ind{I}
\def \QED{\hfill$\square$}
\numberwithin{equation}{section} 
\newcommand{\1}{{\rm 1}\kern-0.24em{\rm I}}
\def\E{\mathbb E}
\def\R{\mathbb R}
\DeclareMathOperator{\spann}{span}
\newtheorem{assumption}{Assumption}
\begin{document}

\begin{frontmatter}
\title{Efficient Estimation of Smooth Functionals in Gaussian Shift Models}
\runtitle{Estimation of smooth functionals}

\begin{aug}
\author{\fnms{Vladimir} \snm{Koltchinskii}\thanksref{t1}\ead[label=e1]{vlad@math.gatech.edu}} and 
\author{\fnms{Mayya} \snm{Zhilova}\thanksref{m1}\ead[label=e2]{mzhilova@math.gatech.edu}}
\thankstext{t1}{Supported in part by NSF Grants DMS-1810958, DMS-1509739 and CCF-1523768}
\thankstext{m1}{Supported by NSF Grant DMS-1712990}
\runauthor{V. Koltchinskii and M. Zhilova}

\affiliation{Georgia Institute of Technology\thanksmark{m1}}

\address{School of Mathematics\\
Georgia Institute of Technology\\
Atlanta, GA 30332-0160\\
\printead{e1}\\
\printead*{e2}
}
\end{aug}
\vspace{0.2cm}
{\small \today}
\vspace{0.2cm}

\begin{abstract}
We study a problem of estimation of smooth functionals of parameter $\theta $ of Gaussian shift model 
$$
X=\theta +\xi,\ \theta \in E,
$$
where $E$ is a separable Banach space and $X$ is an observation of unknown vector $\theta$ 
in Gaussian noise $\xi$ with zero mean and known covariance operator $\Sigma.$ 
In particular, we develop estimators $T(X)$ of $f(\theta)$ for functionals $f:E\mapsto {\mathbb R}$ 
of H\"older smoothness $s>0$ such that 
$$
\sup_{\|\theta\|\leq 1} {\mathbb E}_{\theta}(T(X)-f(\theta))^2 \lesssim \Bigl(\|\Sigma\| \vee ({\mathbb E}\|\xi\|^2)^s\Bigr)\wedge 1,
$$
where $\|\Sigma\|$ is the operator norm of $\Sigma,$
and show that this mean squared error rate is minimax optimal at least in the case 
of standard Gaussian shift model ($E={\mathbb R}^d$ equipped with the canonical Euclidean norm, 
$\xi =\sigma Z,$ $Z\sim {\mathcal N}(0;I_d)$). Moreover, we determine a sharp threshold on the smoothness 
$s$ of functional $f$ such that, for all $s$ above the threshold, $f(\theta)$ can be estimated efficiently
with a mean squared error rate of the order $\|\Sigma\|$  
in a ``small noise" setting (that is, when ${\mathbb E}\|\xi\|^2$ is small). 
The construction of efficient estimators is crucially based on a ``bootstrap chain" method
of bias reduction. The results could be applied to a variety of special high-dimensional and 
infinite-dimensional Gaussian models (for vector, matrix and functional data).   
\end{abstract}

\begin{keyword}[class=AMS]
\kwd[Primary ]{62H12} \kwd[; secondary ]{62G20, 62H25, 60B20}
\end{keyword}

\begin{keyword}
\kwd{Efficiency} \kwd{Smooth functionals} \kwd{Gaussian shift model} \kwd{Bootstrap} 
\kwd{Effective rank} 
\kwd{Concentration inequalities} \kwd{Normal approximation} 
\end{keyword}

\end{frontmatter}


\newpage


\section{Introduction}
\label{Intro}

The problem of estimation of functionals of ``high complexity" parameters of statistical models often occurs 
both in high-dimensional and in nonparametric statistics, where it is of importance to identify some features 
of a complex parameter that could be estimated efficiently with a fast (sometimes, parametric) convergence 
rates. Such problems are very important in the case of vector, matrix or functional parameters in a variety of 
applications including functional data analysis and kernel machine learning 
(\cite{RamsaySilverman}, \cite{BlanchardBousquetZwald}). In this paper, we study a very basic version 
of this problem in the case of rather general Gaussian models with unknown mean.  
Consider the following {\it Gaussian shift model}
\begin{align}
\label{Gauss_shift}
X=\theta + \xi,\  \theta \in E,
\end{align}
where $E$ is a separable Banach space, $\theta$ is an unknown parameter and $\xi$ is a mean zero Gaussian random variable in $E$
(the noise) with {\it known} covariance operator $\Sigma.$ 
In other words, an observation $X\sim {\mathcal N}(\theta;\Sigma)$ in Gaussian shift model \eqref{Gauss_shift} 
is a Gaussian vector in $E$ with unknown mean $\theta$ and known covariance 
$\Sigma.$ 
Recall that $\Sigma$ is an operator from 
the dual space $E^{\ast}$ into $E$ such that $\Sigma u := {\mathbb E}\langle \xi,u\rangle \xi, u\in E^{\ast}.$
Here and in what follows, $\langle x, u\rangle$ denotes the value of a linear functional $u\in E^{\ast}$
on a vector $x\in E$ (although, in some parts of the paper, with a little abuse of notation, $\langle \cdot, \cdot \rangle$ will also 
denote the inner product of Euclidean spaces). It is well known that the covariance operator $\Sigma$ of a Gaussian 
vector in $E$ is bounded and, moreover, it is nuclear. 

Our goal is to study the problem of estimation of $f(\theta)$ for smooth functionals $f:E \mapsto {\mathbb R}.$
The problem of estimation of smooth functionals of parameters of infinite-dimensional (nonparametric) models 
has been studied for several decades. It is considerably harder than in the classical finite-dimensional 
parametric i.i.d. models, where under standard regularity assumptions, $f(\hat \theta)$ ($\hat \theta$ being the maximum 
likelihood estimator) is an asymptotically efficient (in the sense of H\`ajek-LeCam) estimator of $f(\theta)$ with 
$\sqrt{n}$-rate for continuously differentiable functions $f.$ In the nonparametric case, classical convergence 
rates do not necessarily hold in functional estimation problems and minimax optimal convergence rates
have to be determined. Moreover, even when the classical convergence rates do hold, the construction of efficient estimator is often a challenging problem. Such problems have been often studied for special models 
(Gaussian white noise model, nonparametric density estimation model, etc) and for special functionals
(with a number of nontrivial results even in the case of linear and quadratic functionals).
Early results in this direction are due to Levit \cite{Levit_1, Levit_2}
and Ibragimov and Khasminskii \cite{Ibragimov}. Further important references include Ibragimov, Nemirovski and 
Khasminskii \cite{Ibragimov_Khasm_Nemirov}, Donoho and Liu \cite{Donoho_1, Donoho_2}, Bickel and Ritov \cite{Bickel_Ritov}, Donoho and Nussbaum \cite{Donoho_Nussbaum}, Nemirovski \cite{Nemirovski_1990, Nemirovski}, Birg\'e and Massart \cite{Birge}, Laurent \cite{Laurent}, 
Lepski, Nemirovski and Spokoiny \cite{Lepski}, Cai and Low \cite{Cai_Low_2005a, Cai_Low_2005b}, Klemel\"a \cite{Klemela} as well as a vast literature on semiparametric efficiency (see, e.g., \cite{BKRW} and references therein).
Early results on consistent and asymptotically 
normal estimation of smooth functionals of high-dimensional parameters are due to Girko \cite{Girko, Girko-1}.  More recently, there has been a lot of interest in efficient and minimax optimal estimation of functionals of parameters of high-dimensional 
models including a variety of problems related to semiparametric efficiency of regularized estimators  
(see \cite{vdgBuhlmannRitovDezeureAOS}, \cite{Montanari}, \cite{Zhang_Zhang}), on minimax optimal 
rates of estimation of special functionals (see \cite{C_C_Tsybakov}), on efficient estimation of smooth 
functionals of covariance in Gaussian models \cite{Koltchinskii_Nickl, Koltchinskii_2017}.

Throughout the paper, given nonnegative $A,B,$ $A\lesssim B$ means that $A\leq CB$ for a numerical 
constant $C,$ $A\gtrsim B$ is equivalent to $B\lesssim A$ and $A\asymp B$ is equivalent to $A\lesssim B\lessim A.$
Sometimes signs of relationships $\lessim, \gtrsim$ and $\asymp$ will be provided with subscripts (say, $A\lesssim_{\gamma} B$ or $A\asymp_{\gamma} B$), indicating possible dependence of the constants on the corresponding parameters. 

In what follows, exponential bounds on random variables (say, on $\zeta$) are often stated in the following form: there exists a constant $C>0$ such that, for all $t\geq 1$ with probability at least $1-e^{-t},$ 
$\zeta\leq Ct.$ The proof could often result in a slightly different bound, for instance, $\zeta \leq Ct$ with probability 
$1-5e^{-t}.$ However, replacing constant $C$ with $C'= 2\log (5)C,$ it is easy to obtain the probability 
bound in the initial form $1-e^{-t}.$ In such cases, we say that ,``adjusting the constants" allows us to write the probability as $1-e^{-t}$ (without providing further details).

We will now briefly discuss the results of Ibragimov, Nemirovski and 
Khasminskii \cite{Ibragimov_Khasm_Nemirov} and follow up results 
of  Nemirovski \cite{Nemirovski_1990, Nemirovski} that are especially 
close to our approach to the problem. In \cite{Ibragimov_Khasm_Nemirov},
the following model was studied 
$$
dX^{(n)}(t)= \theta(t)dt +\frac{1}{\sqrt{n}}dw(t), t\in [0,1],
$$
in which a ``signal" $\theta \in \Theta\subset L_2([0,1])$ 
is observed in a Gaussian white noise ($w$ being a standard Brownian motion on $[0,1]$). 
The complexity of the parameter space $\Theta$ was characterized by Kolmogorov widths:
$$
d_m(\Theta):= \inf_{L\subset L_2([0,1]), {\rm dim}(L)\leq m} \sup_{\theta\in \Theta}\|\theta-P_L\theta\|^2,
$$
where $P_L$ denotes the orthogonal projection onto subspace $L.$ Assuming that $\Theta\subset U:=\{\theta\in L_2([0,1]): \|\theta\|\leq 1\}$ and, for some $\beta>0,$
$$
d_m(\Theta)\lesssim m^{-\beta}, m\geq 1,
$$
the goal of the authors was to determine a ``smoothness threshold" $s(\beta)>0$ such that, for all 
$s>s(\beta)$ and for all functionals $f$ on $L_2([0,1])$ of smoothness $s,$ $f(\theta)$ could be estimated 
efficiently with rate $n^{-1/2}$ based on observation $X^{(n)}$ (whereas for $s<s(\beta)$ there exist functionals 
$f$ of smoothness $s$ such that $f(\theta)$ could not be estimated with parametric rate $n^{-1/2}$).  It turned 
out that the main difficulties in this problem are related to a proper definition of the smoothness 
of the functional $f.$ In particular, even such simple functional as $f(\theta)=\|\theta\|^2$ could not be 
estimated efficiently on some sets $\Theta$ with $\beta\leq 1/4.$ The smoothness of functionals on Hilbert 
space $L_2([0,1])$ is usually defined in terms of their H\"older type norms that, in turn, depend on a way 
in which the norm of Fr\'echet derivatives $f^{(k)}(\theta)$ is defined. The $k$-th order Fr\'echet derivative 
is a symmetric $k$-linear form on $L_2([0,1]).$ The most common definition of the norm of such a form 
$M(h_1,\dots, h_k), h_1,\dots, h_k\in L_2([0,1])$ is the operator norm:
$
\|M\|:= \sup_{h_1,\dots, h_k\in U}|M(h_1,\dots, h_k)|.
$
Other possibilities include Hilbert--Schmidt norm $\|M\|_{HS}$ and 
``hybrid" norms $\|M\|_{(j)}:=\sup_{h_1,\dots, h_j\in U}\|M(h_1,\dots, h_j, \cdot, \dots, \cdot)\|_{HS}, 0\leq j\leq k.$
The H\"older classes in \cite{Ibragimov_Khasm_Nemirov} were defined in terms of the following 
norms: for $s=k+\gamma,$ $k\geq 0,$ $\gamma \in (0,1],$
$$
\|f\|_{\tilde C^s} := \max_{0\leq j\leq k-1}\sup_{\theta\in 2U} \|f^{(j)}(\theta)\|_{HS}\bigvee \sup_{\theta\in 2U}\|f^{(k)}(\theta)\|_{(1)}
\bigvee \sup_{\theta,\theta'\in 2U,\theta\neq \theta'}\frac{\|f^{(k)}(\theta)-f^{(k)}(\theta')\|}{\|\theta-\theta'\|}.
$$ 
With this somewhat complicated definition, it was proved that, if $\|f\|_{\tilde C^s}<\infty$ and,
either $k\leq 2$ and $s>\frac{1}{2\beta}+1,$ or $k\geq 3$ and $s>\frac{1}{2\beta},$ then there 
exists an asymptotically efficient estimator of $f(\theta)$ with convergence rate $n^{-1/2}.$ 
The construction of such estimators was based on the development of a method of unbiased 
estimation of Hilbert--Schmidt polynomials on $L_2([0,1])$ and on Taylor expansion of $f(\theta)$
in a neighborhood of an estimator $\hat \theta$ of $\theta$ with an optimal nonparametric error 
rate. It was later shown in \cite{Nemirovski_1990, Nemirovski} that the smoothness thresholds 
described above are optimal.

We will study similar problems for Gaussian shift model \eqref{Gauss_shift} trying to determine 
smoothness thresholds for efficient estimation in terms of proper complexity characteristics
for this model.  

Among the simplest smooth functionals on $E$ are bounded linear 
functionals $E\ni\theta\mapsto \langle \theta, u\rangle, u\in E^{\ast}.$ 
For a straightforward estimator $\langle X,u\rangle$ of such a functional,
$$
{\mathbb E}_{\theta}(\langle X,u\rangle- \langle\theta,u\rangle)^2= {\mathbb E}\langle \xi,u\rangle^2 
=\langle \Sigma u,u\rangle,
$$
and, for functionals $u$ from the unit ball of $E^{\ast}$ the largest possible mean squared 
error is equal to the operator norm of $\Sigma:$
$$
\|\Sigma\| = \sup_{\|u\|,\|v\|\leq 1} {\mathbb E}\langle \xi,u \rangle \langle \xi,v\rangle = 
\sup_{\|u\|\leq 1} {\mathbb E}\langle \xi,u\rangle^2.
$$
It is also not hard to prove the following proposition.

\begin{proposition}
Let 
$$
\hat T(X):=
\begin{cases}
\langle X,u\rangle\ {\rm for}\ \|\Sigma\|\leq 1\\
0\ \ \ \ \ \ \  {\rm for }\ \|\Sigma\|>1.
\end{cases}
$$
Then 
$$
\sup_{\|u\|\leq 1}\sup_{\|\theta\|\leq 1} {\mathbb E}_{\theta}(\hat T(X)-\langle \theta,u\rangle)^2 \leq 
\|\Sigma\| \wedge 1
$$
and 
\begin{equation}
\label{lower_MS}
\sup_{\|u\|\leq 1}\inf_{T}\sup_{\|\theta\|\leq 1} {\mathbb E}_{\theta}(T(X)-\langle \theta,u\rangle)^2 
\gtrsim 
\|\Sigma\|\wedge 1.
\end{equation}
\end{proposition}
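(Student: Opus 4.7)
The plan is to prove the upper and lower bounds separately. The upper bound is a direct calculation; the lower bound uses a Le Cam two-point argument.

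\textbf{Upper bound.} I would split on the definition of $\hat T(X).$ On $\{\|\Sigma\|\leq 1\}$ the estimator $\langle X,u\rangle$ has squared error $\langle\xi,u\rangle^2$ with expectation $\langle\Sigma u,u\rangle\leq\|\Sigma\|\|u\|^2\leq\|\Sigma\|.$ On $\{\|\Sigma\|>1\}$ the estimator $0$ has squared error $\langle\theta,u\rangle^2\leq\|\theta\|^2\|u\|^2\leq 1.$ In both cases the bound is $\|\Sigma\|\wedge 1,$ and taking the supremum over $\|u\|,\|\theta\|\leq 1$ finishes this half.

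\textbf{Lower bound.} I would apply Le Cam's two-point method. Using the identity $\|\Sigma\|=\sup_{\|u\|\leq 1}\langle\Sigma u,u\rangle$ displayed just before the proposition, pick $u_0\in E^\ast$ with $\|u_0\|\leq 1$ and $\langle\Sigma u_0,u_0\rangle\geq \tfrac12\|\Sigma\|.$ Then contrast the two parameter values $\theta_0=0$ and $\theta_1=c\,\Sigma u_0$ for a scalar $c>0$ to be chosen. Since $\theta_1$ lies in the range of $\Sigma,$ it belongs to the Cameron--Martin space of ${\mathcal N}(0,\Sigma)$ and the Kullback--Leibler divergence between ${\mathcal N}(\theta_1,\Sigma)$ and ${\mathcal N}(0,\Sigma)$ equals $\tfrac12 c^2\langle\Sigma u_0,u_0\rangle.$

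The scalar $c$ must satisfy two constraints: the admissibility $\|\theta_1\|_E\leq 1,$ and the indistinguishability $\mathrm{KL}\leq 1/2.$ The bridge between these is the Cauchy--Schwarz inequality for the positive bilinear form $\Sigma,$ giving $\|\Sigma u_0\|_E=\sup_{\|v\|\leq 1}|\langle\Sigma u_0,v\rangle|\leq\sqrt{\|\Sigma\|\langle\Sigma u_0,u_0\rangle}.$ Choosing $c=\min\bigl(1,\|\Sigma\|^{-1/2}\bigr)\langle\Sigma u_0,u_0\rangle^{-1/2}$ ensures both constraints and yields
$$
\langle\theta_1,u_0\rangle=c\langle\Sigma u_0,u_0\rangle=\min\bigl(1,\|\Sigma\|^{-1/2}\bigr)\sqrt{\langle\Sigma u_0,u_0\rangle}\geq \sqrt{(\|\Sigma\|\wedge 1)/2}.
$$
Pinsker's inequality then bounds the total variation distance by $1/2,$ and Le Cam's two-point lemma gives
$$
\inf_T\sup_{\|\theta\|\leq 1}{\mathbb E}_\theta(T(X)-\langle\theta,u_0\rangle)^2\gtrsim |\langle\theta_1,u_0\rangle|^2\gtrsim \|\Sigma\|\wedge 1,
$$
which, after the outer $\sup_{\|u\|\leq 1},$ is exactly \eqref{lower_MS}.

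\textbf{Main obstacle.} The delicate point is the joint tuning of $c$ so the alternative remains in the unit ball while the two Gaussian measures stay sufficiently close. The estimate $\|\Sigma u_0\|\leq\sqrt{\|\Sigma\|\langle\Sigma u_0,u_0\rangle},$ which uses nothing beyond positivity of the covariance form and therefore works in the Banach setting, is the key observation that merges the regimes $\|\Sigma\|\leq 1$ and $\|\Sigma\|>1$ into the single uniform lower bound $\|\Sigma\|\wedge 1.$
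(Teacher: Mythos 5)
Your proof is correct. The paper leaves this proposition unproved (``It is also not hard to prove...''), so there is no official argument to compare against, but your two-point construction is the natural one: the upper bound is the obvious case split, and for the lower bound the choice $\theta_1=c\,\Sigma u_0$ with $c=\min(1,\|\Sigma\|^{-1/2})\langle\Sigma u_0,u_0\rangle^{-1/2}$ does satisfy both the Cameron--Martin/KL constraint ($\mathrm{KL}=\tfrac12\min(1,\|\Sigma\|^{-1})\leq\tfrac12$) and the unit-ball constraint (via the Cauchy--Schwarz estimate $\|\Sigma u_0\|\leq\sqrt{\|\Sigma\|\langle\Sigma u_0,u_0\rangle}$, which you rightly identify as the step that makes the argument work in a general Banach space and unifies the two regimes). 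The separation $\langle\theta_1,u_0\rangle\geq\sqrt{(\|\Sigma\|\wedge1)/2}$ then gives the claimed $\gtrsim\|\Sigma\|\wedge1$ by Le Cam's lemma; the only (trivial) edge case is $\Sigma=0$, where the bound holds vacuously.
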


In what follows, the complexity of estimation problem will 
be characterized by two parameters of the noise $\xi.$ One is the operator norm $\|\Sigma\|,$  
which is involved in the minimax mean squared error for estimation of linear functionals. 
It will be convenient to view $\|\Sigma\|$ as the {\it weak variance} of $\xi.$
Another complexity parameter is the {\it strong variance} of $\xi$ defined as 
$$
{\mathbb E}\|\xi\|^2 = {\mathbb E}\sup_{\|u\|,\|v\|\leq 1}\langle \xi,u \rangle \langle \xi,v\rangle
={\mathbb E}\sup_{\|u\|\leq 1} \langle \xi,u\rangle^2.
$$
Clearly, ${\mathbb E}\|\xi\|^2\geq \|\Sigma\|.$ The ratio of these two parameters, 
$${\bf r}(\Sigma):= \frac{{\mathbb E}\|\xi\|^2}{\|\Sigma\|},$$ 
is called the {\it effective rank} of $\Sigma$ and it was used earlier in concentration bounds 
for sample covariance operators and their spectral projections \cite{Koltchinskii_Lounici_arxiv, Koltchinskii_Lounici_bilinear}.    
The following properties 
of ${\bf r}(\Sigma)$ are obvious:
$$
{\bf r}(\Sigma) \geq 1\ {\rm and}\ {\bf r}(\lambda \Sigma)= {\bf r}(\Sigma), \lambda >0.
$$
Thus, the effective rank is invariant with respect to rescaling of $\Sigma$ (or rescaling of the noise).
In this sense, $\|\Sigma\|$ and ${\bf r}(\Sigma)$ can be viewed as complementary parameters 
of the noise. It is easy to check that, if $E$ is a Hilbert space, then ${\bf r}(\Sigma)=\frac{{\rm tr}(\Sigma)}{\|\Sigma\|},$
which implies that 
$
{\bf r}(\Sigma)\leq {\rm rank}(\Sigma)\leq {\rm dim}(E).
$ 
Clearly, ${\bf r}(\Sigma)$ could be viewed as a way to measure the dimensionality 
of the noise. In particular, for the maximum likelihood estimator $X$ of $\theta$ 
in the Gaussian shift model \eqref{Gauss_shift}, we have 
$
{\mathbb E}_{\theta}\|X-\theta\|^2 = {\mathbb E}\|\xi\|^2= \|\Sigma\| {\bf r}(\Sigma),
$
resembling a standard formula $\sigma^2 d$ for the risk of estimation of a vector 
in ${\mathbb R}^d$ observed in a ``white noise" with variance $\sigma^2.$ 

We discuss below several simple examples of the general Gaussian shift model \eqref{Gauss_shift}.

\begin{example}
\normalfont
\label{Example 1}
{\bf Standard Gaussian shift model.} Let $E={\mathbb R}^d$ be equipped with the canonical 
Euclidean inner product and the corresponding norm (the $\ell_2$-norm), and let $\xi=\sigma Z,$ where 
$\sigma>0$ is a known constant and $Z\sim {\mathcal N}(0;I_d).$ In this case, $\Sigma=\sigma^2 I_d,$ $\|\Sigma\|=\sigma^2,$
${\mathbb E}\|\xi\|^2 = \sigma^2 d$ and ${\bf r}(\Sigma)=d.$ Note that the size of effective rank ${\bf r}(\Sigma)$
crucially depends on the choice of underlying norm of the linear space. For instance, if 
$E={\mathbb R}^d={\ell}_{\infty}^d$ is equipped with the $\ell_{\infty}$-norm instead of $\ell_2$-norm,  
then we still have $\|\Sigma\|= \sigma^2,$ but 
$$
{\mathbb E}\|\xi\|_{\ell_{\infty}}^2 \asymp \sigma^2 \log d,
$$
implying that ${\bf r}(\Sigma)\asymp \log d.$
\end{example}

\begin{example}
\normalfont
\label{Example 2}
{\bf Matrix Gaussian shift models.} Let $E$ be the space of all symmetric $d\times d$ matrices 
equipped with the operator norm and let $\xi=\sigma Z$ with known parameter $\sigma>0$ and $Z$ sampled 
from the Gaussian orthogonal ensemble (that is, $Z=(Z_{ij})_{i,j=1}^d$ is a symmetric random matrix, 
$Z_{ij}, i\leq j$ are independent r.v., $Z_{ij}\sim {\mathcal N}(0,1), i<j,$ $Z_{ii}\sim {\mathcal N}(0;2)$).  
In this case, $\|\Sigma\|\asymp \sigma^2$ and 
$$
{\mathbb E}\|\xi\|^2 = \sigma^2 {\mathbb E}\|Z\|^2 \asymp \sigma^2 d,
$$
implying that ${\bf r}(\Sigma)\asymp d.$ As before, the effective rank would be different 
for a different choice of norm on $E.$ For instance, if $E$ is equipped with the Hilbert--Schmidt 
norm, then ${\bf r}(\Sigma)\asymp d^2$ (compare this with Example 1). 
\end{example}

\begin{example}
\normalfont
\label{Example 3}
{\bf Gaussian functional data model.} Let $E=C([0,1]^d), d\geq 1$ be equipped with 
the sup-norm $\|\cdot \|_{\infty}.$ Suppose that $\xi:= \sigma Z,$ where $\sigma>0$ is a known 
parameter and $Z$ is a mean zero Gaussian process on $[0,1]^d$
with the sample paths continuous a.s. (and with known distribution). 
Without loss of generality, 
assume that $\sup_{t\in [0,1]^d} {\mathbb E}Z^2(t)=1.$  
Suppose that, for some $\beta>0,$ 
$$
\tau^2(t,s):={\mathbb E}|Z(t)-Z(s)|^2 \lesssim |t-s|^{\beta},\ t,s\in [0,1]^d.
$$
Then, it is easy to see that the following bound holds for the metric entropy $H_{\tau}([0,1]^d;\eps)$
of $[0,1]^d$ with respect to metric $\tau:$
$$
H_{\tau}([0,1]^d;\eps)\lesssim_{\beta} d \log \frac{1}{\eps}.
$$
It follows from Dudley's entropy bound that
$$
{\mathbb E}\|Z\|_{\infty}^2\lesssim_{\beta} \Bigl(\int_0^1H_{\tau}^{1/2}([0,1]^d;\eps)d\eps\Bigr)^2 
\lesssim  d.
$$
Therefore, it is easy to conclude that 
$\|\Sigma\|\asymp \sigma^2$ and ${\mathbb E}\|\xi\|_{\infty}^2 \lesssim \sigma^2 d,$
implying that ${\bf r}(\Sigma)\lesssim d.$
\end{example}

In the following sections, we develop estimators $T(X)$ of $f(\theta)$ in Gaussian shift model 
with mean squared error of the order 
$$
\sup_{\|\theta\|\leq 1}{\mathbb E}_{\theta}(T(X)-f(\theta))^2 \lesssim \Bigl(\|\Sigma\|\vee ({\mathbb E}\|\xi\|^2)^s\Bigr) \wedge 1,
$$
where $s$ is the degree of smoothness of functional $f.$ We also show that this error rate 
is minimax optimal (at least in the case of standard Gaussian shift model).
Moreover, we determine a sharp threshold on smoothness $s$ such that, for all $s$ above this threshold
and all functionals $f$ of smoothness $s,$
the mean squared error rate of estimation of $f(\theta)$ is of the order $\|\Sigma\|\wedge 1$ 
(as for linear functionals), and, for all $s$ strictly above the threshold,  
we prove the efficiency of our estimators in the ``small noise" case 
(when the strong variance ${\mathbb E}\|\xi\|^2$ is small).  
The key ingredient in the development of such 
estimators is a \it bootstrap chain bias reduction \rm method introduced in \cite{Koltchinskii_2017} 
in the problem of estimation of smooth functionals of covariance operators. We will outline this approach 
in Section \ref{Overview} and develop it in detail in Section \ref{Bias_Reduction}
for Gaussian shift models. 


\section{Overview of Main Results}
\label{Overview}

We will study how the optimal error rate of estimation of $f(\theta)$ for parameter $\theta$ of 
Gaussian shift model \eqref{Gauss_shift} depends on the smoothness of the functional $f:E\mapsto {\mathbb R}$ 
as well 
as on the weak and strong variances, $\|\Sigma\|$ and ${\mathbb E}_{\Sigma}\|\xi\|^2,$ of the noise $\xi$ (or, equivalently, on the parameters $\|\Sigma\|$ and ${\bf r}(\Sigma)$). 
To this end, we define below a Banach space $C^{s,\gamma}(E)$
of functionals $f:E\mapsto {\mathbb R}$ of smoothness $s>0$ such that $f$ and its derivatives 
grow as $\|\theta\|\to \infty$ not faster than $\|\theta\|^{\gamma}$ for some $\gamma\geq 0.$

\subsection{Differentiability}

For Banach spaces $E,F,$ let ${\mathcal M}_k (E;F)$ be the Banach space of symmetric $k$-linear
forms $M:E\times \dots \times E\mapsto F$ with bounded operator norm 
$$
\|M\|:= \sup_{\|h_1\|\leq 1,\dots, \|h_k\|\leq 1}\|M(h_1,\dots, h_k)\|<\infty.
$$
For $k=0,$ ${\mathcal M}_0(E;F)$ is the space of constants
(vectors of $F$).
A function $P:E\mapsto F$ defined by $P(x):=M(x,\dots, x), x\in E,$ where 
$M\in {\mathcal M}_k (E;F),$ is called {\it a bounded homogeneous $k$-polynomial} 
on $E$ with values in $F.$ It is known that $P$ uniquely defines $M\in {\mathcal M}_k (E;F).$
{\it A bounded polynomial} on $E$ with values in $F$ is an arbitrary function $P:E\mapsto F$ represented as 
a finite sum $P(x):=\sum_{j\in I} P_j(x), x\in E, I\subset {\mathbb Z}_+,$ where $P_j$ is a non-zero bounded homogeneous $j$-polynomial. For $I=\emptyset,$ we set $P:=0.$
Polynomials $P_j, j\in I$ are uniquely defined by $P.$ The degree of $P$
is defined as ${\rm deg}(P):=\max (I)$ (with ${\rm deg}(0)=0$). If $P_j(x)=M_j(x,\dots, x)$
for $M_j\in {\mathcal M}_j (E;F),$ define 
$$
\|P\|_{\rm op}:= \sum_{j\in I}\|M_j\|.
$$ 

Recall that a function $f:E\mapsto F$ is called Fr\'echet differentiable 
at a point $x\in E$  iff there exists a bounded linear operator $f^{\prime}(x)$
from $E$ to $F$ (Fr\'echet derivative) such that 
$$
f(x+h)-f(x)= f^{\prime}(x)h + o(\|h\|)\ {\rm as}\ h\to 0.
$$
Higher order Fr\'echet derivatives could be defined by induction. 
The $k$-th order Fr\'echet derivative $f^{(k)}(x)$ at point $x$ is defined 
as the Fr\'echet derivative of the mapping $E\ni x\mapsto f^{(k-1)}(x)\in {\mathcal M}_{k-1}(E;F)$
(assuming its Fr\'echet differentiability). It is a bounded linear operator from $E$ to 
${\mathcal M}_{k-1}(E;F)$ that could be also viewed as a bounded symmetric $k$-linear
form from the space ${\mathcal M}_k(E;F).$ As always, we call $f$ $k$-times (Fr\'echet) 
continuously differentiable if its $k$-th order derivative exists and it is a continuous 
function on $E.$ Clearly, polynomials are $k$ times Fr\'echet differentiable 
for any $k.$ If $P$ is a polynomial and ${\rm deg}(P)=k,$ then $P^{(k)}$ 
is a constant (a $k$-linear symmetric form that does not depend on $x$)
and $P^{(k+1)}=0.$

We will be interested in what follows in classes of smooth functionals $f:E\mapsto {\mathbb R}$
with at most polynomial (with respect to $\|x\|$) growth of their derivatives.  To this end, we describe  
below several useful norms.

First, let $g:E\mapsto F.$ For $\gamma\geq 0,$ let 
$$
\|g\|_{L_{\infty,\gamma}}:= \sup_{x\in E}\frac{\|g(x)\|}{(1\vee \|x\|)^{\gamma}}
$$
and for $\gamma\geq 0, \rho \in (0,1],$ let 
$$
\|g\|_{{\rm Lip}_{\rho,\gamma}}:= \sup_{x'\neq x''}\frac{\|g(x')-g(x'')\|}{(1\vee \|x'\|\vee \|x''\|)^{\gamma}\|x'-x''\|^{\rho}}. 
$$
Assuming that spaces $E,F$ are equipped with their Borel $\sigma$-algebras,
we define 
$L_{\infty,\gamma}(E;F)$ as the space 
of measurable functions $g:E\mapsto F$ with 
$\|g\|_{L_{\infty,\gamma}}<\infty.$
We also define 
$$
{\rm Lip}_{\rho,\gamma}(E;F):=\{g: \|g\|_{{\rm Lip}_{\rho,\gamma}}<\infty\}.
$$ 
In the case of $F={\mathbb R},$ we will write simply $L_{\infty,\gamma}(E)$ and 
${\rm Lip}_{\rho,\gamma}(E);$ for $\gamma=0,$ we write $L_{\infty}, {\rm Lip}_{\rho}$
instead of $L_{\infty,0}, {\rm Lip}_{\rho,0}.$ 

For $k\geq 0,$ we will define the norm 
$$
\|g\|_{C^{k,\gamma}} := \max_{0\leq j\leq k}\|g^{(j)}\|_{L_{\infty,\gamma}}
$$
and the space $C^{k,\gamma}(E;F):=\{g: \|g\|_{C^{k,\gamma}}<\infty\}$
of $k$ times differentiable functions (with the growth rate of derivatives characterized 
by $\gamma$). Finally, for $s=k+\rho$ with $k\geq 0$ and $\rho \in (0,1),$ define 
$$
\|g\|_{C^{s,\gamma}} := \max_{0\leq j\leq k}\|g^{(j)}\|_{L_{\infty,\gamma}}\vee 
\|g^{(k)}\|_{{\rm Lip}_{\rho,\gamma}}
$$
and the space $C^{s,\gamma}(E;F):=\{g: \|g\|_{C^{s,\gamma}}<\infty\}.$
As before, we set $C^s :=C^{s,0}.$
It is easy to see that for any polynomial $P$ such that ${\rm deg}(P)=k$
and for all $s> 0,$ $P\in C^{s,k}(E).$

In what follows, we frequently use bounds on the remainder of the first order Taylor 
expansion 
$$
S_g(x;h):= g(x+h)-g(x)-g^{\prime}(x)(h), x,h\in E
$$
of Fr\'echet differentiable function $g:E\mapsto {\mathbb R}.$
We will skip the proof of the following simple lemma.

\begin{lemma}
\label{Taylor_remainder}
Assume that $g:E\mapsto {\mathbb R}$ is Fr\'echet differentiable in $E$ with 
$g^{\prime}\in {\rm Lip}_{\rho,\gamma}(E;{\mathcal M}_1(E;F)).$ 
Then 
$$
|S_g(x;h)| \lesssim \|g^{\prime}\|_{{\rm Lip}_{\rho,\gamma}} (1\vee \|x\|\vee \|h\|)^{\gamma}\|h\|^{1+\rho}, x,h\in E
$$
and 
$$
|S_g(x;h')-S_g(x;h)|\lesssim \|g^{\prime}\|_{{\rm Lip}_{\rho,\gamma}} (1\vee \|x\|\vee \|h\|\vee \|h'\|)^{\gamma} (\|h\|\vee \|h'\|)^{\rho}\|h'-h\|,\ x,h,h'\in E.
$$
\end{lemma}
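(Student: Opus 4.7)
The plan is to prove both bounds via the fundamental theorem of calculus, applied to $g$ along the appropriate line segments, followed by the Lipschitz hypothesis on $g'$.

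For the first bound, I would write the remainder as
$$
S_g(x;h)=\int_0^1 \bigl[g'(x+th)-g'(x)\bigr](h)\,dt,
$$
which is the standard integral form of the first-order Taylor remainder for a Fr\'echet differentiable function. Taking absolute values, bounding $|[g'(x+th)-g'(x)](h)|\le \|g'(x+th)-g'(x)\|\,\|h\|$, and invoking the definition of $\|\cdot\|_{{\rm Lip}_{\rho,\gamma}}$ gives
$$
\|g'(x+th)-g'(x)\|\le \|g'\|_{{\rm Lip}_{\rho,\gamma}}\bigl(1\vee\|x+th\|\vee\|x\|\bigr)^{\gamma}(t\|h\|)^{\rho}.
$$
The elementary inequality $1\vee \|x+th\|\vee\|x\|\le 2(1\vee\|x\|\vee\|h\|)$ absorbs the shift into the $\gamma$-factor (at the cost of a $2^\gamma$, hidden by $\lesssim$), and integrating $\int_0^1 t^\rho dt=\tfrac{1}{1+\rho}$ yields the first bound.

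For the second bound, I would observe that the linear term $g'(x)(h)$ cancels nicely:
$$
S_g(x;h')-S_g(x;h)=g(x+h')-g(x+h)-g'(x)(h'-h),
$$
and then apply the fundamental theorem of calculus along the segment from $x+h$ to $x+h'$:
$$
S_g(x;h')-S_g(x;h)=\int_0^1\bigl[g'(x+h+t(h'-h))-g'(x)\bigr](h'-h)\,dt.
$$
This is the key maneuver: one parameterizes the difference in a way that exposes $h'-h$ as the ``small'' increment while keeping the comparison point fixed at $x$. The Lipschitz bound then gives
$$
\|g'(x+h+t(h'-h))-g'(x)\|\le \|g'\|_{{\rm Lip}_{\rho,\gamma}}\bigl(1\vee\|x\|\vee\|h\|\vee\|h'\|\bigr)^{\gamma}\|h+t(h'-h)\|^{\rho},
$$
where I've again absorbed the shift using $1\vee\|x+h+t(h'-h)\|\vee\|x\|\lesssim 1\vee\|x\|\vee\|h\|\vee\|h'\|$, using that $h+t(h'-h)$ is a convex combination of $h$ and $h'$ so $\|h+t(h'-h)\|\le \|h\|\vee\|h'\|$. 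Pulling the resulting constants out of the integral and multiplying by $\|h'-h\|$ from the linear form gives the second bound.

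The only thing requiring care is the bookkeeping of the $(1\vee\cdots)^\gamma$ factor under translations of the base point; once one observes that shifting by vectors of size at most $\|h\|\vee\|h'\|$ only changes this quantity by a multiplicative constant depending on $\gamma$, both estimates reduce to a routine integration. There is no serious obstacle here, which is presumably why the authors chose to skip the proof.
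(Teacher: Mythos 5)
Your proof is correct: both bounds follow exactly as you describe from the integral form of the Taylor remainder (justified since the Lipschitz hypothesis makes $t\mapsto g'(x+th)(h)$ continuous, so the fundamental theorem of calculus applies along each segment), the Lipschitz bound on $g'$, and the elementary estimates $\|x+th\|\leq 2(\|x\|\vee\|h\|)$ and $\|(1-t)h+th'\|\leq \|h\|\vee\|h'\|$. The paper explicitly omits the proof of this lemma, and your argument is the standard one the authors evidently had in mind.
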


\subsection{Definition of estimators and risk bounds}

The crucial step in construction of estimator $T_k$ is a bias reduction method developed in detail in Section \ref{Bias_Reduction}
and briefly outlined here. Consider the following linear operator 
$$
\T g(\theta)\coloneqq \E_{\theta}g(X)= \E g(\theta +\xi), \theta \in E
$$
that is well defined on the spaces $L_{\infty, \gamma}(E)$ for $\gamma\geq 0.$  
Given a smooth functional $f:E\mapsto {\mathbb R},$ we would like to find a functional $g$
on $E$ such that the bias of estimator $g(X)$ of $f(\theta)$ is small enough. In other words, we would 
like to find an approximate solution of operator equation $\T g(\theta)=f(\theta), \theta \in E.$ 
Under the assumption that the strong variance ${\mathbb E}\|\xi\|^2$ of the noise $\xi$ is small,
the operator $\T$ is close to the identity operator $\I.$ Define $\B := \T-\I.$ Then, at least formally,
the solution of the equation $\T g(\theta)=f(\theta), \theta\in E$ could be written as a Neumann series:
$$
g= (\I+\B)^{-1} f = (\I-\B+\B^2-\B^3+\dots)f.
$$
We will define an estimator $f_k(X)$ in terms of a partial sum of this series:
$$
f_k(\theta):= \sum_{j=0}^k (-1)^{j} \B^j f(\theta), \theta\in E.
$$
It will be proved in Section \ref{Bias_Reduction}, that, for this estimator, the bias ${\mathbb E}_{\theta} f_k(X)-f(\theta)$
is of the order $\lesssim ({\mathbb E}^{1/2}\|\xi\|^2)^s,$ provided that $f\in C^{s,\gamma}(E)$ for $s=k+1+\rho,$ 
$k\geq 0,$ $\rho\in (0,1]$ and $\|\theta\|$ is bounded by a constant.

We will prove in Section \ref{sec.conc} the following result.

\begin{theorem}
\label{first_result}
Let $s=k+1+\rho$ for some $\rho\in (0,1]$ and let $\gamma\geq 0.$
Suppose that $f\in C^{s,\gamma}(E).$ 
Let 
\begin{equation}
\nonumber
T_k(X):=
\begin{cases}
f_k(X)\ {\rm if}\ {\mathbb E}^{1/2}\|\xi\|^2\leq 1/2\\
0\ \ \ \ \ \ \  {\rm otherwise.}
\end{cases}
\end{equation}
Then
\begin{align}
\label{upper_mean_square}
{\mathbb E}_{\theta}(T_k(X)-f(\theta))^2\lesssim_{\gamma} (k+1)^{\gamma} \|f\|_{C^{s, \gamma}}^2
(1\vee \|\theta\|)^{2\gamma}\Bigl(\Bigl(\|\Sigma\| \vee ({\mathbb E}\|\xi\|^2)^s\Bigr)\wedge 1\Bigr).
\end{align}
\end{theorem}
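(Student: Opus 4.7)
The plan is a bias--variance decomposition, handling the large-noise regime by the truncation and the small-noise regime via the bootstrap-chain construction of $f_k$.

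\emph{Case 1 ($\mathbb{E}^{1/2}\|\xi\|^2 > 1/2$).} Then $T_k(X) \equiv 0$, so the error is $f(\theta)^2 \le \|f\|_{C^{s,\gamma}}^2(1 \vee \|\theta\|)^{2\gamma}$. In this regime $(\mathbb{E}\|\xi\|^2)^s > (1/4)^s$, hence $(\|\Sigma\| \vee (\mathbb{E}\|\xi\|^2)^s) \wedge 1$ is bounded below by a positive constant depending only on $s = k+1+\rho$, and the claim follows by absorbing this constant into the implicit $\lesssim_\gamma$.

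\emph{Case 2 ($\mathbb{E}^{1/2}\|\xi\|^2 \le 1/2$).} Then $T_k(X) = f_k(X) = \sum_{j=0}^k (-1)^j \mathcal{B}^j f(X)$, and I write
$$\mathbb{E}_\theta(f_k(X) - f(\theta))^2 = (\mathbb{E}_\theta f_k(X) - f(\theta))^2 + \mathrm{Var}_\theta(f_k(X)).$$
The squared bias is controlled by the bootstrap-chain bias reduction from Section \ref{Bias_Reduction}, extended to general $\theta$ via the $(1 \vee \|\theta\|)^\gamma$ growth built into $C^{s,\gamma}$ together with Gaussian moment bounds $\mathbb{E}\|\xi\|^{2\gamma} \lesssim_\gamma (\mathbb{E}\|\xi\|^2)^\gamma \le 1$. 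This yields
$$|\mathbb{E}_\theta f_k(X) - f(\theta)| \lesssim_\gamma (k+1)^{\gamma/2}\|f\|_{C^{s,\gamma}}(1 \vee \|\theta\|)^\gamma(\mathbb{E}\|\xi\|^2)^{s/2},$$
i.e.\ a squared-bias contribution $\lesssim_\gamma (k+1)^\gamma \|f\|^2_{C^{s,\gamma}}(1 \vee \|\theta\|)^{2\gamma}(\mathbb{E}\|\xi\|^2)^s$.

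The variance is handled by Gaussian concentration in Banach space. Since $\xi$ is centered Gaussian with covariance $\Sigma$, a Borell--TIS / Poincar\'e-type inequality gives $\mathrm{Var}(F(\theta + \xi)) \lesssim \|\Sigma\| \sup_{x} \|F'(x)\|^2$ for any suitably differentiable $F:E\to\mathbb{R}$; the factor $\|\Sigma\|$ arises because a Lipschitz constant in the ambient $E$-norm is boosted by $\sqrt{\|\Sigma\|}$ when passed to the Cameron--Martin norm of $\Sigma$. Applied with $F = f_k$, this reduces the task to bounding $\|f_k'\|_{L_{\infty,\gamma}}$. Using the representation $\mathcal{B}^j f(\theta) = \sum_{I\subset\{1,\dots,j\}}(-1)^{j-|I|}\mathbb{E}f\bigl(\theta + \sum_{i\in I}\xi_i\bigr)$, differentiating under the expectation, and invoking Gaussian moment estimates together with $\mathbb{E}\|\xi\|^2 \le 1/4$, I obtain $\|f_k'\|_{L_{\infty,\gamma}} \lesssim_\gamma (k+1)^{\gamma/2}\|f\|_{C^{s,\gamma}}$, whence $\mathrm{Var}_\theta(f_k(X)) \lesssim_\gamma (k+1)^\gamma \|f\|^2_{C^{s,\gamma}}(1 \vee \|\theta\|)^{2\gamma}\|\Sigma\|$. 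Adding the bias and variance bounds yields the claim, and in Case 2 one has $\|\Sigma\| \vee (\mathbb{E}\|\xi\|^2)^s \le 1/4 < 1$ so the ``$\wedge 1$'' is automatic.

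The main obstacle is the variance estimate: both ingredients are non-trivial in the Banach-space setting, namely the correct form of Gaussian concentration (which must deliver the factor $\sqrt{\|\Sigma\|}$ rather than $\sqrt{\mathbb{E}\|\xi\|^2}$), and the polynomial-in-$k$ control of $\|f_k'\|_{L_{\infty,\gamma}}$, which requires a careful analysis of the iterates of the bias operator $\mathcal{B}$.
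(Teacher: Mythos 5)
Your overall architecture --- trivial truncation case, then a bias--variance split with the bias coming from the bootstrap chain and the fluctuation controlled at the weak-variance scale $\|\Sigma\|$ --- is the same as the paper's, but the stochastic part is packaged differently: the paper proves a full exponential concentration inequality for $f_k(\theta+\xi)$ (by truncating $f_k$ to make it globally Lipschitz and applying the Gaussian isoperimetric inequality around the median) and then integrates the tails through an Orlicz-norm lemma with $\psi(t)=t^2$, whereas you go directly for the variance via a Poincar\'e-type inequality. For the $L_2$ statement alone your route is more economical (the paper needs the stronger concentration bound later for the normal approximation anyway). One caveat: the inequality $\mathrm{Var}(F(\theta+\xi))\lesssim \|\Sigma\|\sup_x\|F'(x)\|^2$ is vacuous for $\gamma>0$, since $f_k'$ is unbounded; you need the form $\mathrm{Var}(F(\theta+\xi))\leq \E\langle \Sigma F'(\theta+\xi),F'(\theta+\xi)\rangle\leq \|\Sigma\|\,\E\|f_k'(\theta+\xi)\|^2$ (obtainable in a separable Banach space by finite-dimensional approximation through the Kwapie\'n--Szyma\'nski representation), after which $\E\|f_k'(\theta+\xi)\|^2\lesssim_{\gamma}\|f_k'\|_{L_{\infty,\gamma}}^2(1\vee\|\theta\|)^{2\gamma}$ follows from $\E\|\xi\|^{2\gamma}\lesssim_{\gamma}(\E\|\xi\|^2)^{\gamma}\leq 1.$ This is fixable.

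The genuine gap is in your derivation of $\|f_k'\|_{L_{\infty,\gamma}}\lesssim_{\gamma}(k+1)^{\gamma/2}\|f\|_{C^{s,\gamma}}.$ Starting from $\B^jf(\theta)=\sum_{I\subset\{1,\dots,j\}}(-1)^{j-|I|}\E f(\theta+\sum_{i\in I}\xi_i)$ and differentiating under the expectation, bounding each of the $2^j$ summands by $\|f'\|_{L_{\infty,\gamma}}$ times Gaussian moments gives only $\|(\B^jf)'\|_{L_{\infty,\gamma}}\lesssim 2^j(\cdots)$, hence $\|f_k'\|_{L_{\infty,\gamma}}\lesssim 2^{k+1}\|f\|_{C^{s,\gamma}}$, exponentially worse than claimed; the individual terms do not decay in $j$, and the condition $\E\|\xi\|^2\leq 1/4$ does not help term by term. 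The decay comes entirely from cancellation in the alternating sum, which the paper extracts by rewriting the $j$-th order finite difference as an iterated integral of the $j$-th derivative (a generalized Newton--Leibniz formula), yielding $\B^jf(\theta)=\E f^{(j)}\bigl(\theta+\sum_{i\leq j}\tau_i\xi_i\bigr)(\xi_1,\dots,\xi_j)$ with $\tau_i$ i.i.d. uniform on $[0,1]$, and hence $\|(\B^jf)'\|_{L_{\infty,\gamma}}\lesssim_{\gamma} j^{\gamma/2}\|f^{(j+1)}\|_{L_{\infty,\gamma}}(\E^{1/2}\|\xi\|^2)^j.$ The geometric factor $(\E^{1/2}\|\xi\|^2)^j\leq 2^{-j}$ is what makes the sum over $j\leq k$ converge with a $k$-independent constant; the same representation underlies the bias bound $(\E^{1/2}\|\xi\|^2)^s$ that you quote from Section \ref{Bias_Reduction}. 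Once you replace the raw binomial representation by this integral one, both ingredients are available and your argument closes.
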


It follows from bound \eqref{upper_mean_square} that 
\begin{align}
\label{upper_mean_square_A}
\sup_{\|f\|_{C^{s,\gamma}}\leq 1} \sup_{\|\theta\|\leq 1} 
{\mathbb E}_{\theta}(T_k(X)-f(\theta))^2 \lesssim_{s,\gamma} 
\Bigl(\Bigl(\|\Sigma\|\vee ({\mathbb E}\|\xi\|^2)^s\Bigr)\wedge 1\Bigr).
\end{align}
We will show in Section \ref{nemirovski} that, {\it in the case of standard Gaussian shift model}, the above bound is optimal in a minimax sense. More precisely, in this case, the following result holds.

\begin{theorem}
\label{min_max_nemirovski}
Let $E:={\mathbb R}^d$ (equipped with the standard Euclidean norm) and let 
$X\sim {\mathcal N}(\theta; \sigma^2 I_d), \theta \in {\mathbb R}^d$ for some $\sigma^2>0.$
Then 
\begin{align}
\label{min_max_mean_square}
\sup_{\|f\|_{C^s}\leq 1} \inf_{T} \sup_{\|\theta\|\leq 1} {\mathbb E}_{\theta}(T(X)-f(\theta))^2 
& \gtrsim 
\Bigl(\|\Sigma\|\vee \Bigl({\mathbb E}\|\xi\|^2\Bigr)^{s}\Bigr) \wedge 1,
\end{align}
where the infimum is taken over all possible estimators $T(X).$
\end{theorem}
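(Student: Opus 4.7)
\emph{Plan.} The claimed lower bound is $\bigl(\|\Sigma\|\vee(\mathbb E\|\xi\|^2)^s\bigr)\wedge 1$ and is dominated by the sum $(\|\Sigma\|\wedge 1)+((\mathbb E\|\xi\|^2)^s\wedge 1)$, so it suffices to establish these two lower bounds separately.

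\emph{Lower bound by $\|\Sigma\|\wedge 1$.} This is immediate from the Proposition in Section 1. For any $u$ in the unit ball of $\mathbb R^d$ the linear functional $f_u(\theta):=\langle u,\theta\rangle$ satisfies $\|f_u\|_{C^s}\leq 1$ (its first derivative is the constant $u$ of norm at most one, and all higher derivatives vanish; the growth of $f_u$ itself is absorbed into $\|\cdot\|_{L_{\infty,\gamma}}$ for $\gamma\geq 1$). Taking the supremum over such $u$ reduces to the linear-functional problem, and bound \eqref{lower_MS} of the Proposition delivers the result.

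\emph{Lower bound by $(\sigma^2 d)^s\wedge 1$.} Only the small-noise regime $\sigma^2 d\leq 1$ is nontrivial. The approach is a Nemirovski-type fuzzy-hypothesis argument: exhibit a single $f$ with $\|f\|_{C^s}\leq 1$ and two priors $\pi_0,\pi_1$ on $\{\|\theta\|\leq 1\}$ whose mixture marginals $P_i:=\pi_i\ast\mathcal N(0,\sigma^2 I_d)$ satisfy $\|P_0-P_1\|_{\mathrm{TV}}\leq 1/2$, while the gap $\Delta:=\mathbb E_{\pi_0}f(\theta)-\mathbb E_{\pi_1}f(\theta)$ has magnitude $\asymp(\sigma^2 d)^{s/2}$. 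The standard Le Cam two-hypothesis lower bound (with an additional control on the intrinsic variances $\mathrm{Var}_{\pi_i}(f(\theta))$, which must be of lower order than $\Delta^2$) then yields the claimed $(\sigma^2 d)^s$ minimax bound. A natural candidate for $f$ is a rescaled bump $f(\theta)=c\,h^s\phi(\theta/h)$, with $\phi$ a smooth compactly supported function on $\mathbb R^d$ of bounded $C^s$-norm and bandwidth $h\asymp\sqrt{\mathbb E\|\xi\|^2}$ matching the typical magnitude of the noise, together with $c$ chosen so that $\|f\|_{C^s}\leq 1$.

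\emph{Main obstacle.} The delicate step is realizing this scheme with the correct $d^s$ power rather than $d^{s/2}$ or smaller. A Dirac pair at separation $\sigma\sqrt d$ is trivially distinguishable (KL of order $d$), and the uniform law on a single sphere of radius $R$ yields, by rotation invariance and the Laplace transform of the spherical measure, a $\chi^2$-divergence equal to $\exp\{R^4/(d\sigma^4)\}-1$; this is bounded only for $R\lesssim\sigma d^{1/4}$, producing a gap of order $(\sigma d^{1/4})^s$ and missing the target by a factor of $d^{s/2}$. The sharp $d^s$ rate should arise from a prior with richer multi-directional structure, for instance a product Gaussian $\mathcal N(0,\tau^2 I_d)$ truncated to the unit ball (paired with a functional $f$ whose fluctuation under this prior is precisely $h^s$ at the right scale), or a mixture over nested shells; the Stein-identity / van Trees viewpoint confirms that for Lipschitz $f$ the first-derivative contribution alone already produces the correct $\sigma^2 d$ rate. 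Simultaneously verifying the TV- or $\chi^2$-bound on the marginals, the mean-gap identity, and the posterior variance bound for such a prior-and-functional pair is, I expect, the most technical portion of the proof.
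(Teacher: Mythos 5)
Your decomposition of the target into the two pieces $\|\Sigma\|\wedge 1$ and $({\mathbb E}\|\xi\|^2)^s\wedge 1$ is fine, but the proposal has a genuine gap in the main piece, and a smaller one in the easy piece.

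\emph{The $(\sigma^2 d)^s$ piece.} You propose a Le Cam fuzzy-two-hypothesis argument with a \emph{single} functional $f$ and two priors, and then your own ``main obstacle'' paragraph correctly diagnoses that the natural priors fail: Dirac pairs at separation $\sigma\sqrt d$ are trivially distinguishable, and the spherical prior only reaches $(\sigma d^{1/4})^s$. You do not actually produce a prior-and-functional pair achieving $(\sigma^2 d)^{s}$, so the core of the theorem remains unproved. The reason the single-functional route is so hard is that the quantifier order $\sup_f\inf_T\sup_\theta$ permits a different functional for each hypothetical estimator, and the paper exploits exactly this: it builds $d$ functionals $f_l(\theta)=\sum_{\omega\in\Omega}\omega_l\,\eps^s\tilde\varphi((\theta-\theta_\omega)/\eps)$, $l=1,\dots,d$, supported on disjoint $\eps$-balls around a Varshamov--Gilbert $2\eps$-separated set $\Theta=\{\theta_\omega\}$ of cardinality $2^{d/8}$ in the unit ball. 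If every $f_l$ were estimable with MSE $\delta^2$, then (since $f_l(\theta_\omega)=\varphi(0)\eps^s\omega_l$ and the $\ell_2$-aggregate of the $f_l$-errors equals $\tfrac{\varphi(0)\eps^{s-1}}{8}\|\theta-\theta'\|$) one could estimate $\theta\in\Theta$ with MSE $\lesssim\delta^2/\eps^{2(s-1)}$; comparing with the standard Fano-type lower bound $\gtrsim\eps^2$ valid for $\eps^2\lesssim\sigma^2 d\wedge 1$ forces $\delta^2\gtrsim\eps^{2s}\asymp(\sigma^2 d)^s\wedge 1$. It is the aggregation over the $d$ coordinates that upgrades the rate from the $d^{s/2}$-type bounds you were obtaining to the full $d^{s}$. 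Without this (or an equivalently rich construction) the argument does not close.

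\emph{The $\sigma^2\wedge 1$ piece.} Quoting bound \eqref{lower_MS} for linear functionals is not quite legitimate: the theorem uses the norm $\|f\|_{C^s}=\|f\|_{C^{s,0}}$, which requires $\sup_x|f(x)|\leq 1$, and an unbounded linear functional has infinite $C^{s,0}$-norm; changing to $\gamma\geq 1$ changes the statement being proved. The fix is to truncate, e.g.\ $f(\theta)=\langle\theta,u\rangle\varphi(\|\theta\|^2)$ with $\|u\|=\kappa$ small, and then to run either a two-point argument or (as the paper does) the van Trees lower bound of Theorem \ref{min_lower_bd}; the introductory Proposition cannot be invoked verbatim. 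This part is repairable, but as written it is not a proof.
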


At this point, we could not extend 
the lower bound of Theorem \ref{min_max_nemirovski} to general Gaussian shift models in Banach spaces. 

\begin{remark}
\normalfont 
In a very recent paper \cite{Fan_Zhou}, Zhou and Li state a similar result (Theorem 7.2) with 
Besov $B^s_{\infty,1}$-norm instead of $C^s$-norm. There is a mistake in the proof of this 
result (contrary to the claim of the authors, it is impossible to choose function $\varphi$ used 
in the proof so that $\|\tilde \varphi\|_{B^s_{\infty,1}}\leq 1$ and other required properties hold). 
However, if the Besov norm is replaced by $C^s$-norms used in our paper, their proof seems correct.
The method of the proof of Theorem 7.2 in \cite{Fan_Zhou} differs from ours.
\end{remark}

\subsection{Efficiency}

Bound \eqref{upper_mean_square_A} implies that, if the smoothness $s$ of functional $f$ is sufficiently 
large, namely if 
\begin{equation}
\label{critical_smooth}
({\mathbb E}\|\xi\|^2)^s\leq \|\Sigma\|,
\end{equation}
then 
\begin{align}
\label{upper_mean_square_B}
\sup_{\|f\|_{C^{s,\gamma}}\leq 1} \sup_{\|\theta\|\leq 1} 
{\mathbb E}_{\theta}(T_k(X)-f(\theta))^2 \lesssim_{s,\gamma} 
\|\Sigma\|\wedge 1,
\end{align}
which coincides with the largest minimax optimal mean squared error 
for linear functionals from the unit ball in $E^{\ast}.$ 
Assuming that ${\mathbb E}\|\xi\|^2\leq 1,$ condition \eqref{critical_smooth} can be equivalently written 
as 
\begin{equation}
\label{critical_smooth''}
s\geq 1+ \frac{\log {\bf r}(\Sigma)}{\log \frac{1}{\|\Sigma\|}-\log {\bf r}(\Sigma)}.
\end{equation}
If $\sigma^2 :=\|\Sigma\|$ is a small parameter and ${\bf r}(\Sigma)\leq \sigma^{-2\alpha}$
for some $\alpha \in (0,1),$ condition \eqref{critical_smooth''} would follow from 
the condition $s\geq \frac{1}{1-\alpha}.$ On the other hand, it follows from bound \eqref{min_max_mean_square}
that, in the case of standard Gaussian shift model, the smoothness threshold $\frac{1}{1-\alpha}$ is sharp
for estimation with mean squared error rate $\asymp \sigma^2.$ Indeed, in this case, ${\bf r}(\Sigma)=d$
and, if $\sigma$ is small and $d\asymp \sigma^{-2\alpha}$ for some $\alpha\in (0,1),$ then, for any $s<\frac{1}{1-\alpha},$ there exists a functional $f$ with $\|f\|_{C^{s,\gamma}}\leq 1$ such that  
\begin{align*}
\inf_{T}\sup_{\|\theta\|\leq 1} 
{\mathbb E}_{\theta}(T(X)-f(\theta))^2 \gtrsim 
\sigma^{2s(1-\alpha)},
\end{align*}
which is significantly larger than $\sigma^2$ as $\sigma \to 0.$ 
Moreover, if $d\asymp \sigma^{-2},$ then, for any $s>0,$ there exists a functional $f$ with $\|f\|_{C^{s,\gamma}}\leq 1$ such that  
\begin{align*}
\inf_{T}\sup_{\|\theta\|\leq 1} 
{\mathbb E}_{\theta}(T(X)-f(\theta))^2 \gtrsim_s 
1,
\end{align*}
essentially implying that even consistent estimators of $f(\theta)$ do not exist in this case.

In the case when ${\bf r}(\Sigma)\lesssim \sigma^{-2\alpha}$ for some $\alpha\in (0,1)$ and $s>\frac{1}{1-\alpha}$ (or, more generally, when $({\mathbb E}\|\xi\|^2)^s$ is of a smaller order than $\|\Sigma\|$), it is possible to prove that $f_k(X)-f(\theta)$ is close in distribution to normal 
and establish the efficiency of estimator $f_k(X).$ 
More precisely, 
let 
$$
\sigma_{f,\xi}^2 (\theta):= {\mathbb E}(f'(\theta)(\xi))^2 
=\langle \Sigma f'(\theta), f'(\theta)\rangle
$$
For $s\geq 1, \gamma\geq 0,$ denote
$$
K(f;\Sigma;\theta):=K_{s,\gamma}(f;\Sigma;\theta):=
\frac{\|f\|_{C^{s,\gamma}}(1\vee \|\theta\|)^{\gamma}\|\Sigma\|^{1/2}}{\sigma_{f,\xi}(\theta)}.
$$
It is easy to see that 
$$
\sigma_{f,\xi}(\theta)\leq \|\Sigma\|^{1/2}\|f^{\prime}(\theta)\|
\leq \|f^{\prime}\|_{L_{\infty,\gamma}}(1\vee \|\theta\|)^{\gamma}\|\Sigma\|^{1/2}
\leq 
\|f\|_{C^{s,\gamma}}(1\vee \|\theta\|)^{\gamma}\|\Sigma\|^{1/2},
$$
implying that $K_{s,\gamma}(f;\Sigma;\theta)\geq 1.$ We also have that 
$$
K_{s,\gamma}(f;\lambda \Sigma;\theta)=K_{s,\gamma}(f;\Sigma;\theta), \lambda>0,
$$
which means that $K_{s,\gamma}(f;\Sigma;\theta)$ does not depend on the noise level
$\|\Sigma\|^{1/2}.$ In what follows, it will be assumed that the functional 
$K_{s,\gamma}(f;\Sigma;\theta)$ is bounded from above by a constant,
implying that $\sigma_{f,\xi}(\theta)$ is within a constant from its upper 
bound $\|f\|_{C^{s,\gamma}}(1\vee \|\theta\|)^{\gamma}\|\Sigma\|^{1/2}.$
This is the case, for instance, when $\theta$ is in a bounded set and $\sigma_{f,\xi}(\theta)\gtrsim \|\Sigma\|^{1/2}$
(in other words, the standard deviation $\sigma_{f,\xi}(\theta)$ is not too small 
comparing with the noise level $\|\Sigma\|^{1/2}$).

The following result will be proved in Section \ref{sec:norm_appr}.

\begin{theorem}
\label{norm_appr_th_1}
Suppose, for some $s=k+1+\rho,$ $\rho\in (0,1]$ and some $\gamma>0,$ $f\in C^{s,\gamma}(E).$
Suppose also that  ${\mathbb E}^{1/2}\|\xi\|^2\leq 1/2.$ 
Then 
\begin{align}
\label{norm_appr}
&
\nonumber
\sup_{y\in {\mathbb R}}
\biggl |
{\mathbb P}_{\theta}
\biggl\{
\frac{f_k(X) - f(\theta)}{\sigma_{f,\xi}(\theta)}\leq y
\biggr\}
-{\mathbb P}\{Z\leq y\}
\biggr |
\lesssim_{\gamma} 
(k+1)^{\gamma/2} 
K_{s,\gamma}(f;\Sigma;\theta)
\\
&
\biggl(
({\mathbb E}^{1/2}\|\xi\|^2)^{\rho}\sqrt{\log\biggl(\frac{1}{\|\Sigma\|}\biggr)}
\bigvee \|\Sigma\|^{\rho/2}\log^{(1+\rho)/2}\biggl(\frac{1}{\|\Sigma\|}\biggr)
\bigvee \frac{({\mathbb E}^{1/2}\|\xi\|^2)^{s}}{\|\Sigma\|^{1/2}}
\biggr),
\end{align}
where $Z$ is a standard normal r.v.
Moreover,
\begin{align}
\label{L_2_remainder}
&
\nonumber
\biggl\|\frac{f_k(X) - f(\theta)}{\sigma_{f,\xi}(\theta)}-Z\biggr\|_{L_2({\mathbb P})}
\\
&
\lesssim_{\gamma}
(k+1)^{\gamma/2} K_{s,\gamma}(f;\Sigma;\theta)
\biggl(({\mathbb E}^{1/2}\|\xi\|^2)^{\rho}
\bigvee \frac{({\mathbb E}^{1/2}\|\xi\|^2)^s}{\|\Sigma\|^{1/2}}
\biggr).
\end{align}
\end{theorem}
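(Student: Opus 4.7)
The plan is to decompose the standardized error into a centered Gaussian main term, a centered stochastic Taylor remainder, and the deterministic estimator bias, then bound each piece using the bias reduction estimates from Section \ref{Bias_Reduction}, Lemma \ref{Taylor_remainder}, and Gaussian concentration of $\|\xi\|$. Using $\E_\theta\xi = 0$ and the first-order Taylor expansion $f_k(X) = f_k(\theta) + f_k'(\theta)(\xi) + S_{f_k}(\theta;\xi)$ with $S_{f_k}(\theta;\xi) = f_k(\theta+\xi)-f_k(\theta)-f_k'(\theta)(\xi)$, I would write
\begin{equation*}
f_k(X) - f(\theta) \;=\; f_k'(\theta)(\xi) \;+\; \bigl[S_{f_k}(\theta;\xi) - \E_\theta S_{f_k}(\theta;\xi)\bigr] \;+\; \bigl[\E_\theta f_k(X) - f(\theta)\bigr].
\end{equation*}
The first summand is centered Gaussian with variance $\sigma_{f_k,\xi}^2(\theta) = \langle \Sigma f_k'(\theta), f_k'(\theta)\rangle$; the second has mean zero; the third is the bias of $f_k(X)$, which by Section \ref{Bias_Reduction} is of order $\|f\|_{C^{s,\gamma}}(1\vee\|\theta\|)^\gamma(\E^{1/2}\|\xi\|^2)^s$ and, after normalization by $\sigma_{f,\xi}(\theta)\gtrsim \|\Sigma\|^{1/2}/K_{s,\gamma}$, contributes the last factor in \eqref{norm_appr}.

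For the linear Gaussian term, dividing by $\sigma_{f,\xi}(\theta)$ gives $\tau Z$ with $Z\sim\mathcal{N}(0,1)$ and $\tau := \sigma_{f_k,\xi}(\theta)/\sigma_{f,\xi}(\theta)$. From the identity $\sigma_{f_k,\xi}^2 - \sigma_{f,\xi}^2 = \langle \Sigma[f_k'(\theta)-f'(\theta)], f_k'(\theta) + f'(\theta)\rangle$, the Neumann representation $f_k - f = \sum_{j=1}^k(-1)^j\B^j f$, the identity $(\B g)'=\B(g')$ (differentiation under expectation), and the one-step estimate $|\B g(\theta)|\lesssim \E\|\xi\|^2$ following from Taylor applied to $g$, one obtains $\|f_k'(\theta) - f'(\theta)\|\lesssim \E\|\xi\|^2$, whence $|\tau^2 - 1|\lesssim K_{s,\gamma}^2\,\E\|\xi\|^2$. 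Since $\E^{1/2}\|\xi\|^2\leq 1/2$ and $\rho\leq 1$ imply $\E\|\xi\|^2 \leq (\E^{1/2}\|\xi\|^2)^\rho$, the Kolmogorov distance between $\tau Z$ and $Z$ (which is $\lesssim |\tau-1|$) is absorbed into the first factor of \eqref{norm_appr}.

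For the centered Taylor remainder $R := S_{f_k}(\theta;\xi) - \E_\theta S_{f_k}(\theta;\xi)$, applying Lemma \ref{Taylor_remainder} to $f_k$ yields $|S_{f_k}(\theta;\xi)|\lesssim \|f_k\|_{C^{s,\gamma}}(1\vee\|\theta\|\vee\|\xi\|)^\gamma\|\xi\|^{1+\rho}$. Combining with the Gaussian concentration $\|\xi\|\lesssim \E^{1/2}\|\xi\|^2 + \|\Sigma\|^{1/2}\sqrt{t}$ (probability $\geq 1-e^{-t}$) and splitting $\|\xi\|^{1+\rho}$ into a ``bulk'' part (scaling like $\E^{1/2}\|\xi\|^2\cdot(\E^{1/2}\|\xi\|^2)^\rho$) and a ``tail'' part (scaling like $\|\Sigma\|^{(1+\rho)/2}t^{(1+\rho)/2}$), then dividing by $\sigma_{f,\xi}(\theta)\asymp \|\Sigma\|^{1/2}/K_{s,\gamma}$, recovers the first two factors of \eqref{norm_appr}. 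To convert a tail bound $\P(|R|/\sigma_{f,\xi} > \eta_t)\leq e^{-t}$ into Kolmogorov distance, I would use the Gaussian anti-concentration $\P(|\tau Z - y|\leq \eta_t)\lesssim \eta_t$ and optimize by choosing $t\asymp \log(1/\|\Sigma\|)$, producing the $\sqrt{\log(1/\|\Sigma\|)}$ and $\log^{(1+\rho)/2}(1/\|\Sigma\|)$ factors. The $L_2$ bound \eqref{L_2_remainder} is obtained more directly from $\|\tau Z - Z\|_{L_2}\lesssim |\tau - 1|$ and $\|R\|_{L_2}\lesssim (\E\|\xi\|^{2(1+\rho)})^{1/2}\lesssim (\E^{1/2}\|\xi\|^2)^{1+\rho}$ via Gaussian hypercontractivity, which is why no logarithmic factors appear there.

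The main technical obstacle I anticipate is the careful bookkeeping of the $(1\vee\|\theta\|)^\gamma$ growth and the $(k+1)^{\gamma/2}$ prefactors through all three pieces of the decomposition, since the iterated application of $\B$ in constructing $f_k$ must be shown not to worsen the polynomial-growth bounds beyond what is already encoded in $K_{s,\gamma}(f;\Sigma;\theta)$. A secondary difficulty is ensuring that the Gaussian tail split for $\|\xi\|^{1+\rho}$ is performed sharply enough to recover the two separate factors (one involving $(\E^{1/2}\|\xi\|^2)^\rho$ with only a $\sqrt{\log}$, and one involving $\|\Sigma\|^{\rho/2}$ with $\log^{(1+\rho)/2}$), rather than collapsing them into a single weaker bound.
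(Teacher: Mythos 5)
Your decomposition, your treatment of the bias term, your handling of the variance mismatch $\tau=\sigma_{f_k,\xi}(\theta)/\sigma_{f,\xi}(\theta)$, and your conversion of a tail bound into a Kolmogorov bound via anti-concentration with $t\asymp \log(1/\|\Sigma\|)$ all match the paper's argument. But there is a genuine gap in the one step that carries the whole theorem: the control of the centered Taylor remainder $S_{f_k}(\theta;\xi)-\E S_{f_k}(\theta;\xi)$. You propose to bound $|S_{f_k}(\theta;\xi)|$ pointwise by $\lesssim \|f_k\|_{C^{s,\gamma}}(\dots)\|\xi\|^{1+\rho}$ and then invoke concentration of the scalar $\|\xi\|$ around $\E\|\xi\|$. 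On the high-probability event this yields a ``bulk'' term of order $(\E\|\xi\|)^{1+\rho}=(\E\|\xi\|)^{\rho}\cdot\E\|\xi\|$, and after dividing by $\sigma_{f,\xi}(\theta)\asymp \|f\|_{C^{s,\gamma}}(1\vee\|\theta\|)^{\gamma}\|\Sigma\|^{1/2}/K_{s,\gamma}$ you get
$K_{s,\gamma}\,(\E^{1/2}\|\xi\|^2)^{\rho}\,\E^{1/2}\|\xi\|^2/\|\Sigma\|^{1/2}
=K_{s,\gamma}\,(\E^{1/2}\|\xi\|^2)^{\rho}\,\sqrt{{\bf r}(\Sigma)}$,
which exceeds the target $K_{s,\gamma}(\E^{1/2}\|\xi\|^2)^{\rho}\sqrt{\log(1/\|\Sigma\|)}$ by a factor of order $\sqrt{{\bf r}(\Sigma)}$ --- unbounded in exactly the high-effective-rank regime the theorem is designed for (e.g.\ ${\bf r}(\Sigma)\asymp\sigma^{-2\alpha}$). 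Subtracting the mean does not rescue the pointwise argument, since $\E|S_{f_k}(\theta;\xi)|$ is itself of order $(\E\|\xi\|)^{1+\rho}$. The same defect hits your $L_2$ bound: $\|S_{f_k}(\theta;\xi)\|_{L_2}\lesssim(\E^{1/2}\|\xi\|^2)^{1+\rho}$ normalized by $\sigma_{f,\xi}(\theta)$ gives $K_{s,\gamma}(\E^{1/2}\|\xi\|^2)^{\rho}\sqrt{{\bf r}(\Sigma)}$, not the claimed $K_{s,\gamma}(\E^{1/2}\|\xi\|^2)^{\rho}$ of \eqref{L_2_remainder}.

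What is actually needed --- and what the paper's Section \ref{sec.conc} supplies --- is a Gaussian isoperimetric concentration inequality for the map $x\mapsto S_{f_k}(\theta;x)$ itself, exploiting that by the second bound of Lemma \ref{Taylor_remainder} this map has a \emph{local Lipschitz constant} of order $(1\vee\|\theta\|\vee\|x\|\vee\|x'\|)^{\gamma}(\|x\|\vee\|x'\|)^{\rho}$. Gaussian concentration (after the truncation argument of Assumption \ref{Lip_lip}) then bounds the fluctuation around the median/mean by
$L(\E\|\xi\|\vee\|\Sigma\|^{1/2}\sqrt{t})\cdot\|\Sigma\|^{1/2}\sqrt{t}
\asymp(\E\|\xi\|\vee\|\Sigma\|^{1/2}\sqrt{t})^{\rho}\,\|\Sigma\|^{1/2}\sqrt{t}$
(Corollary \ref{conc_S_g}): one full power of $\|\xi\|$ is replaced by the weak-variance scale $\|\Sigma\|^{1/2}\sqrt{t}$ rather than the strong-variance scale $\E\|\xi\|$. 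This replacement is precisely the source of the factors $(\E^{1/2}\|\xi\|^2)^{\rho}\|\Sigma\|^{1/2}\sqrt{t}$ and $\|\Sigma\|^{(1+\rho)/2}t^{(1+\rho)/2}$ in \eqref{norm_appr}, and of the $\|\Sigma\|^{1/2}$ (rather than $\E^{1/2}\|\xi\|^2$) in the $\psi$-norm bound that yields \eqref{L_2_remainder}. Your proof plan as written would only establish the theorem with $\sqrt{\log(1/\|\Sigma\|)}$ replaced by $\sqrt{{\bf r}(\Sigma)}$, which is a materially weaker statement.
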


It follows from bound \eqref{L_2_remainder} that 
\begin{align}
\label{L_2_up}
&
\nonumber
\frac{{\mathbb E}_{\theta}^{1/2}(f_k(X) - f(\theta))^2}{\sigma_{f,\xi}(\theta)}
\\
&
\leq 
1+ c_{\gamma}(k+1)^{\gamma/2} K_{s,\gamma}(f;\Sigma;\theta)
\biggl(({\mathbb E}^{1/2}\|\xi\|^2)^{\rho}
\bigvee \frac{({\mathbb E}^{1/2}\|\xi\|^2)^s}{\|\Sigma\|^{1/2}}
\biggr).
\end{align}
Assume that $\theta$ is in a set $\Theta\subset E$ of parameters where $K_{s,\gamma}(f;\Sigma;\theta)$ is upper bounded by a constant. 
Then, $\frac{{\mathbb E}_{\theta}^{1/2}(f_k(X) - f(\theta))^2}{\sigma_{f,\xi}(\theta)}$ is close to $1$ uniformly in $\Theta$ provided that 
${\mathbb E}\|\xi\|^2$ is small and $({\mathbb E}\|\xi\|^2)^s$ is 
much smaller than $\|\Sigma\|$ (say, if ${\bf r}(\Sigma)\lesssim \sigma^{-2\alpha}$ and $s>\frac{1}{1-\alpha}$).

Finally, in Section \ref{Sec:Lowerbounds}, we will prove the following minimax lower bound. 

\begin{theorem}
\label{min_lower_bd}
Suppose $f\in C^{s,\gamma}(E)$ for some $s\in (1,2]$ and $\gamma\geq 0.$
Let 
$\theta_0\in \overline{{\rm Im}(\Sigma)}.$ 
Then, there exists a constant $D_{\gamma}>0$ such that 
for all $c>0$ and all covariance operators $\Sigma$ satisfying the condition 
$
c\|\Sigma\|^{1/2}\leq 1,
$ 
the following bound holds  
\begin{align*}
&
\inf_{T}\sup_{\|\theta-\theta_0\|\leq c\|\Sigma\|^{1/2}}
\frac{\E_{\theta} (T(X)-f(\theta))^2}{\sigma^{2}_{f,\xi}(\theta)} 
\geq 1-D_{\gamma} K_{s,\gamma}^{2}(f;\Sigma;\theta_0) 
\Bigl(c^{s-1}\|\Sigma\|^{(s-1)/2}+\frac{1}{c^2}\Bigr),
\end{align*}
where the infimum is taken over all possible estimators $T(X).$
\end{theorem}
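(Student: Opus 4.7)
The plan is to reduce the lower bound to a van Trees (Bayesian Cram\'er--Rao) inequality on a one-dimensional Gaussian sub-model whose Fisher information equals $\sigma^2_{f,\xi}(\theta_0)$. Set $h:=f'(\theta_0)\in E^{\ast}$ and $v:=\Sigma h\in E$. Since $v\in\Sigma(E^{\ast})$ lies in the Cameron--Martin space of $\mathcal{N}(0,\Sigma)$ and $\theta_0\in\overline{{\rm Im}(\Sigma)}$, the family $\{\mathcal{N}(\theta_0+\alpha v,\Sigma):\alpha\in\mathbb{R}\}$ is a regular one-parameter sub-model whose log-density (relative to $\mathcal{N}(\theta_0,\Sigma)$) is $\alpha\langle h,X-\theta_0\rangle-\tfrac{\alpha^2}{2}\langle\Sigma h,h\rangle$ and whose Fisher information equals the constant $I_0:=\langle\Sigma h,h\rangle=\sigma^2_{f,\xi}(\theta_0)$. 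A Cauchy--Schwarz bound on the nonnegative bilinear form $\langle\Sigma\cdot,\cdot\rangle$ on $E^{\ast}$ also gives $\|v\|_E\leq\|\Sigma\|^{1/2}\sigma_{f,\xi}(\theta_0)$, so $B:=c\|\Sigma\|^{1/2}/\|v\|\geq c/\sigma_{f,\xi}(\theta_0)$ keeps $\theta_0+\alpha v$ inside the allowed neighborhood for every $|\alpha|\leq B$.

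With a smooth prior $\pi$ on $[-B,B]$ with $J(\pi):=\int(\pi')^2/\pi\asymp B^{-2}$ (a scaled cosine-squared bump, say) and $\psi(\alpha):=f(\theta_0+\alpha v)$, van Trees gives, for every estimator $T$,
\[
\int\E_{\theta_0+\alpha v}(T(X)-\psi(\alpha))^2\pi(\alpha)\,d\alpha\ \geq\ \frac{\bigl(\int\psi'(\alpha)\pi(\alpha)\,d\alpha\bigr)^2}{I_0+J(\pi)}.
\]
To match the variable normalizer $\sigma^2_{f,\xi}(\theta)$ in the conclusion, I would pass to the supremum via the elementary inequality $\sup_\alpha A(\alpha)/B(\alpha)\geq\int A\,\pi/\sup_\alpha B$, which reduces the whole estimate to controlling two relative errors on $|\alpha|\leq B$: $\eta:=\sup|\psi'(\alpha)/I_0-1|$ and $\delta:=\sup|\sigma^2_{f,\xi}(\theta_0+\alpha v)/I_0-1|$.

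Both errors are delivered by Lemma \ref{Taylor_remainder} applied to $f'\in{\rm Lip}_{\rho,\gamma}$ (recall $s=1+\rho$). The bound $|\psi'(\alpha)-I_0|\lesssim_\gamma\|f\|_{C^{s,\gamma}}(1\vee\|\theta_0\|)^{\gamma}\|v\|^{1+\rho}|\alpha|^{\rho}$, together with $|\alpha|\|v\|\leq c\|\Sigma\|^{1/2}$ and $\|v\|\leq\|\Sigma\|^{1/2}\sigma_{f,\xi}(\theta_0)$, yields $\eta\lesssim_\gamma K_{s,\gamma}(f;\Sigma;\theta_0)\,c^{s-1}\|\Sigma\|^{(s-1)/2}$; an analogous expansion of $\sigma^2_{f,\xi}(\theta)=\langle\Sigma f'(\theta),f'(\theta)\rangle$ (using $|\langle\Sigma u,w\rangle|\leq\|\Sigma\|\|u\|_{E^{\ast}}\|w\|_{E^{\ast}}$ and $\|f'\|_{E^{\ast}}\leq\|f\|_{C^{s,\gamma}}(1\vee\|\theta\|)^{\gamma}$ on both factors) produces the extra $K$-factor and gives $\delta\lesssim_\gamma K_{s,\gamma}^2(f;\Sigma;\theta_0)\,c^{s-1}\|\Sigma\|^{(s-1)/2}$. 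Finally $J(\pi)/I_0\lesssim 1/(B^2 I_0)\leq c^{-2}$; assembling these into $(1-\eta)^2/[(1+\delta)(1+J(\pi)/I_0)]\geq 1-(2\eta+\delta+J(\pi)/I_0)+\ldots$ and absorbing $K_{s,\gamma}\geq 1$ yields $1-D_\gamma K_{s,\gamma}^2(c^{s-1}\|\Sigma\|^{(s-1)/2}+c^{-2})$. I expect the main technical obstacle to be justifying the Cameron--Martin admissibility of $v=\Sigma f'(\theta_0)$ and the resulting Gaussian log-density formula in the general Banach setting---this is precisely what forces $\theta_0\in\overline{{\rm Im}(\Sigma)}$ and makes the identity $I_0=\sigma^2_{f,\xi}(\theta_0)$ rigorous; a secondary care point is the bookkeeping of constants so that the errors combine exactly as $K^2_{s,\gamma}$, with the $K^2$ (rather than $K$) arising from the quadratic dependence of $\sigma^2_{f,\xi}$ on $f'$.
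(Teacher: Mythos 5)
Your proposal is correct, and the analytic core (the control of the two relative errors via ${\rm Lip}_{\rho,\gamma}$, the choice of direction $\Sigma f'(\theta_0)$, the prior with $J(\pi)\asymp B^{-2}$, and the final assembly into $1-D_\gamma K^2(\cdot)$) matches the paper's computations almost line for line: your $\delta$-bound is the paper's Lemma \ref{theta>>>theta_0} and your $\eta$-bound is the estimate of the term $I_1$ inside the paper's Lemma \ref{apply_van_Trees}. Where you genuinely diverge is in the measure-theoretic setup. The paper does not apply van Trees in the Banach space directly: it first writes $\xi=\sum_k x_k g_k$ (Theorem \ref{theorem:Kwapien_Szymanski}), projects onto $L_N=\spann\{x_1,\dots,x_N\}$, discards $\xi_N^\perp$ by conditioning and Jensen's inequality, transports the problem to ${\mathbb R}^N$ via the bijection $A_N$ so that the sub-model becomes the textbook location family ${\mathcal N}(A_N^{-1}\theta_t,I_N)$ with Fisher information computed by elementary linear algebra ($\Sigma_N=A_NA_N^*$ gives $I=1$), and finally lets $N\to\infty$. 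You instead stay in $E$ and get the one-parameter family's likelihood ratio from the Cameron--Martin formula for the shift $\alpha v$ with $v=\Sigma f'(\theta_0)$, which is admissible because $v\in\Sigma(E^*)$; this is legitimate since the Gill--Levit van Trees inequality only requires a one-dimensional parameter and a dominated family on an arbitrary sample space, and it buys you a shorter proof with no approximation or limiting argument. One small correction: the hypothesis $\theta_0\in\overline{{\rm Im}(\Sigma)}$ is not what makes your Cameron--Martin step work --- absolute continuity of ${\mathcal N}(\theta_0+\alpha v,\Sigma)$ with respect to ${\mathcal N}(\theta_0,\Sigma)$ depends only on the increment $\alpha v$, not on $\theta_0$ --- so your argument never actually uses that hypothesis; in the paper it is needed only to produce the approximating centers $\theta_{0,N}\in L_N$ with $\theta_{0,N}\to\theta_0$ in the finite-dimensional reduction.
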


The bound of Theorem \ref{min_lower_bd} shows that, when the noise level 
$\|\Sigma\|^{1/2}$ is small and $K_{s,\gamma}(f;\Sigma;\theta_0)$ is upper 
bounded by a constant, the following asymptotic minimax result (in spirit 
of H\`ajek and Le Cam) holds
$$
\lim_{c\to\infty}\liminf_{\|\Sigma\|^{1/2}\to 0}\inf_{T}\sup_{\|\theta-\theta_0\|\leq c\|\Sigma\|^{1/2}}
\frac{\E_{\theta} (T(X)-f(\theta))^2}{\sigma^{2}_{f,\xi}(\theta)} \geq 1
$$ 
locally in a neighborhood of parameter $\theta_0$ of size commensurate with the noise level.
This shows the optimality of the variance $\sigma_{f,\xi}^2(\theta)$ of normal approximation and the efficiency of estimator $f_k(X).$

\begin{remark}
\normalfont
In the case of matrix Gaussian shift model of Example \ref{Example 2} (that is, when $E$ is the space 
of symmetric $d\times d$ matrices equipped with operator norm and $\xi=\sigma Z,$ $Z$ being 
a random matrix from Gaussian orthogonal ensemble), 
the results of the paper 
could be applied, in particular, to bilinear forms of smooth functions of $d\times d$ symmetric matrices:
$f(\theta):=\langle h(\theta) u,v\rangle,$ where $h$ is a smooth function in real line and  
$u,v\in {\mathbb R}^d.$ Namely, it was shown in \cite{Koltchinskii_2017}, Corollary 2 (based on the results of \cite{Peller_87}, \cite{Aleksandrov_Peller}) that the $C^s$-norm of operator function $\theta\mapsto h(\theta)$ can be controlled in terms of Besov $B^s_{\infty,1}$-norm of underlying function of real variable $h:$ $\|h\|_{C^s(E)}\lesssim_s \|h\|_{B_{\infty,1}^s}, s>0.$ This allows one to apply 
all the results stated above to functional $f(\theta)$ provided that $h$ is in a proper Besov space. 
Note that spectral projections of $\theta$ that correspond to subsets of its spectrum separated by 
a positive gap from the rest of the spectrum could be represented as $h(\theta)$ for sufficiently smooth
functions $h,$ which allows one to apply the results to bilinear forms of spectral projections
(see also \cite{Koltchinskii_Xia}). 
In \cite{Koltchinskii_2017}, similar results were obtained for smooth functionals of covariance 
operators. 
\end{remark}

\begin{remark}
\normalfont 
Obviously, the results of the paper can be applied to the model of i.i.d. observations $X_1,\dots, X_n\sim 
{\mathcal N}(\theta;\Sigma),\ \theta \in E.$ If $\bar X:= \frac{X_1+\dots +X_n}{n},$ then it follows 
from Theorem \ref{first_result} that 
\begin{align}
\label{upper_mean_square_barX}
{\mathbb E}_{\theta}(T_k(\bar X)-f(\theta))^2\lesssim_{\gamma} (k+1)^{\gamma} \|f\|_{C^{s, \gamma}}^2
(1\vee \|\theta\|)^{2\gamma}\Bigl(\Bigl(\frac{\|\Sigma\|}{n} \bigvee 
\biggl(\frac{\|\Sigma\|{\bf r}(\Sigma)}{n}\biggr)^s\bigwedge 1\Bigr).
\end{align}
Uniformly in the class of covariances with $\|\Sigma\|\lesssim 1$ and ${\bf r}(\Sigma)\lesssim n^{\alpha}$ for some $\alpha\in (0,1),$
this yields a bound on the mean squared error of the order $O(\frac{1}{n})$ provided that $s\geq \frac{1}{1-\alpha}.$
Moreover, if $s>\frac{1}{1-\alpha},$ estimator $f_k(\bar X)$ is asymptotically normal and asymptotically efficient with convergence rate $\sqrt{n}$
and limit variance $\sigma_{f,\xi}^2(\theta).$
\end{remark}

\section{Bias Reduction}
\label{Bias_Reduction}

A crucial part of our approach to efficient estimation of smooth functionals of $\theta$
is a new bias reduction method based on iterative application of parametric bootstrap. 
Our goal is to construct an estimator of smooth functional $f(\theta)$ of parameter $\theta\in E$
and, to this end, we construct an estimator of the form $g(X)$ for some functional $g:E\mapsto {\mathbb R}$
for which the bias $\E_{\theta} g(X)-f(\theta)$ is negligible comparing with the noise level 
$\|\Sigma\|^{1/2}.$ Define the following linear operator: 
$$
\T g(\theta)\coloneqq \E_{\theta}g(X)= \E g(\theta +\xi), \theta \in E.
$$

\begin{proposition}
For all $\gamma\geq 0,$ $\T$ is a bounded linear operator 
from the space $L_{\infty,\gamma}(E)$ into itself with 
\begin{equation}
\label{bd_norm_T}
\|\T\|_{L_{\infty,\gamma}(E)\mapsto L_{\infty,\gamma}(E)} \leq 
2^{\gamma}(1+\E\|\xi\|^{\gamma}).
\end{equation}
\end{proposition}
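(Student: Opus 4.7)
The plan is a direct computation: bound $|\T g(\theta)|$ pointwise by $\|g\|_{L_{\infty,\gamma}}$ times a quantity of the form $\E(1\vee\|\theta+\xi\|)^{\gamma}$, then extract the factor $(1\vee\|\theta\|)^{\gamma}$ from the Gaussian expectation.

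First I would verify that $\T g$ is well defined and measurable. Since $g$ is Borel measurable on $E$ and $\xi$ is a Gaussian vector in $E$, $(\theta,\omega)\mapsto g(\theta+\xi(\omega))$ is jointly measurable, and Fubini justifies defining $\T g(\theta)=\E g(\theta+\xi)$ as a Borel function of $\theta$. The finiteness of the integral will follow from the estimate below together with Fernique's theorem, which guarantees $\E\|\xi\|^{\gamma}<\infty$ for every $\gamma\geq 0$.

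The main step is the pointwise bound. By the definition of $\|g\|_{L_{\infty,\gamma}}$,
\begin{equation*}
|\T g(\theta)|\leq \E|g(\theta+\xi)|\leq \|g\|_{L_{\infty,\gamma}}\,\E(1\vee\|\theta+\xi\|)^{\gamma}.
\end{equation*}
Using the elementary inequality $1\vee(a+b)\leq (1\vee a)+b$ for $a,b\geq 0$ together with the triangle inequality,
\begin{equation*}
1\vee\|\theta+\xi\|\leq (1\vee\|\theta\|)+\|\xi\|.
\end{equation*}
Combined with $(x+y)^{\gamma}\leq 2^{\gamma}(x^{\gamma}+y^{\gamma})$ (valid for all $x,y\geq 0$ and $\gamma\geq 0$), this yields
\begin{equation*}
\E(1\vee\|\theta+\xi\|)^{\gamma}\leq 2^{\gamma}\bigl((1\vee\|\theta\|)^{\gamma}+\E\|\xi\|^{\gamma}\bigr)\leq 2^{\gamma}\bigl(1+\E\|\xi\|^{\gamma}\bigr)(1\vee\|\theta\|)^{\gamma},
\end{equation*}
where the final step uses $(1\vee\|\theta\|)^{\gamma}\geq 1$.

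Dividing by $(1\vee\|\theta\|)^{\gamma}$ and taking the supremum over $\theta\in E$ gives $\|\T g\|_{L_{\infty,\gamma}}\leq 2^{\gamma}(1+\E\|\xi\|^{\gamma})\|g\|_{L_{\infty,\gamma}}$, which is the claimed operator-norm bound; linearity of $\T$ is immediate from linearity of expectation. There is no real obstacle here — the only point that deserves a line of comment is that $\E\|\xi\|^{\gamma}$ is finite, which is a standard consequence of the Gaussianity of $\xi$ in the Banach space $E$ (Fernique's theorem).
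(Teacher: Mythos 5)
Your proof is correct and follows essentially the same route as the paper: a pointwise bound $|\T g(\theta)|\leq \|g\|_{L_{\infty,\gamma}}\,\E(1\vee\|\theta+\xi\|)^{\gamma}$ followed by an elementary splitting that extracts $2^{\gamma}(1+\E\|\xi\|^{\gamma})(1\vee\|\theta\|)^{\gamma}$ (the paper routes through $(1\vee\|\theta\|\vee\|\xi\|)^{\gamma}$ rather than $((1\vee\|\theta\|)+\|\xi\|)^{\gamma}$, but this is an immaterial variation yielding the same constant). Your added remarks on measurability and on Fernique's theorem guaranteeing $\E\|\xi\|^{\gamma}<\infty$ are correct points the paper leaves implicit.
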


\begin{proof}
Indeed, by the definition of $L_{\infty,\gamma}$-norm, 
$$
|g(\theta +\xi)|\leq 2^{\gamma}\|g\|_{L_{\infty, \gamma}} (1\vee \|\theta\|\vee \|\xi\|)^{\gamma}.
$$
Therefore,
$$
|\T g(\theta)| \leq \E|g(\theta+\xi)| \leq 2^{\gamma}\|g\|_{L_{\infty, \gamma}}  
\E(1\vee \|\theta\|\vee \|\xi\|)^{\gamma}\leq 
2^{\gamma}
[(1\vee \|\theta\|)^{\gamma}+\E\|\xi\|^{\gamma}]
\|g\|_{L_{\infty, \gamma}}, 
$$
which easily implies that 
\begin{equation}
\label{bd_T}
\|\T g\|_{L_{\infty,\gamma}}\leq  2^{\gamma}(1+\E\|\xi\|^{\gamma})
\|g\|_{L_{\infty,\gamma}}.
\end{equation}
Therefore $\T$ is a bounded operator from $L_{\infty,\gamma}(E)$ 
into itself and bound \eqref{bd_norm_T} holds.
\qed
\end{proof}

The following proposition could be easily proved by induction.

\begin{proposition}
\label{T^k}
Let $\xi_1,\xi_2, \dots$ be i.i.d. copies of $\xi$ and let $g\in L_{\infty,\gamma}(E)$
for some $\gamma>0.$ Then, for all $k\geq 1,$
$$
\T^k g(\theta)= \E g\biggl(\theta+\sum_{j=1}^k \xi_j\biggr),
\theta \in E.
$$
\end{proposition}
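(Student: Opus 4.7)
The plan is to prove the identity by induction on $k$, with the base case $k=1$ being nothing more than the definition of $\T$, namely $\T g(\theta) = \E g(\theta + \xi_1)$ (since $\xi_1$ has the same distribution as $\xi$). The induction hypothesis at step $k-1$ gives $\T^{k-1}g(\theta) = \E g(\theta + \sum_{j=1}^{k-1}\xi_j)$; the goal is to chain one more application of $\T$.

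For the inductive step, I first note that the previous proposition guarantees $\T: L_{\infty,\gamma}(E) \to L_{\infty,\gamma}(E)$ is bounded, so $h \coloneqq \T^{k-1}g$ is itself an element of $L_{\infty,\gamma}(E)$ and the composition $\T^k g = \T h$ is well-defined. Writing $\T h(\theta) = \E h(\theta + \xi_k)$ using an independent copy $\xi_k$ of $\xi$, and substituting the inductive formula for $h$ evaluated at the (random) point $\theta + \xi_k$, gives
\begin{equation*}
\T^k g(\theta) = \E_{\xi_k}\Bigl[\E_{\xi_1',\dots,\xi_{k-1}'} g\Bigl(\theta + \xi_k + \sum_{j=1}^{k-1}\xi_j'\Bigr)\Bigr],
\end{equation*}
where $\xi_1',\dots,\xi_{k-1}'$ are the i.i.d. copies used in the inductive formula and are independent of $\xi_k$. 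By Fubini's theorem (which applies because $g \in L_{\infty,\gamma}$ and the finite Gaussian sum has finite $\gamma$-moment, so the integrand is integrable with respect to the joint law), the iterated expectation collapses to a single expectation over the joint distribution of $(\xi_k, \xi_1',\dots,\xi_{k-1}')$. Relabeling this i.i.d. tuple as $(\xi_1,\dots,\xi_k)$ yields the claimed identity $\T^k g(\theta) = \E g(\theta + \sum_{j=1}^k \xi_j)$.

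The only point requiring any care is the Fubini step, i.e.\ checking that one is entitled to interchange or combine the two expectations; this amounts to verifying absolute integrability of $g(\theta + \xi_k + \sum_{j=1}^{k-1}\xi_j')$. This is immediate from $|g(x)| \leq \|g\|_{L_{\infty,\gamma}}(1 \vee \|x\|)^\gamma$ together with $\E\|\sum_{j=1}^k \xi_j\|^\gamma < \infty$ for any $\gamma \geq 0$ (a standard property of Gaussian random vectors in a separable Banach space, by Fernique's theorem). No other obstacle arises.
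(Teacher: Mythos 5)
Your proof is correct and is precisely the inductive argument the paper alludes to when it says the proposition "could be easily proved by induction" without writing out the details. The base case, the use of the previous proposition to justify that $\T^{k-1}g\in L_{\infty,\gamma}(E)$, and the Fubini step (justified by the polynomial growth bound on $g$ together with finiteness of Gaussian moments) are exactly the right ingredients and nothing is missing.
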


Note that, by a simple modification of the proof of bound \eqref{bd_T}, 
we can derive from Proposition \ref{T^k} that 
\begin{equation}
\label{bd_T^k}
\|\T^kg\|_{L_{\infty,\gamma}}\leq  2^{\gamma}(1+k^{\gamma/2}\E\|\xi\|^{\gamma})\|g\|_{L_{\infty,\gamma}}.
\end{equation}

To find an estimator $g(X)$ of $f(\theta)$ with a small bias it suffices to solve (approximately)
the equation $(\T g)(\theta)=f(\theta), \theta \in E.$ Denote 
$\B=:\T-\I .$ For a small level of noise $\xi,$
one can expect operator $\B$ to be ``small". The solution of 
equation $\T g=f$ could be then formally written as a Neumann 
series:
$$
g= (\I +\B)^{-1}f = (\I -\B+\B^2-\dots)f.
$$
We use a partial sum of this series as an approximate solution
$$
f_k(\theta):= \sum_{j=0}^k (-1)^j \B^j f(\theta), \theta \in E
$$
and consider in what follows the estimator $f_k(X)$ of $f(\theta).$

Our main goal in this section is to prove the following theorem that provides 
an upper bound on the bias of estimator $f_k(X).$

\begin{theorem}
\label{bias_control}
Let $s=k+1+\rho$ for some $\rho\in (0,1]$ and let $\gamma\geq 0.$
Suppose that $f\in C^{s,\gamma}(E).$
Denote by ${\frak B} f_k(\theta):= {\mathbb E }_{\theta}f_k(X)-f(\theta), \theta\in E$ 
the bias of estimator $f_k(X).$
Then 
\begin{align*}
\|{\frak B}f_k\|_{L_{\infty,\gamma}}
\lesssim 
2^{\gamma}\|f^{(k+1)}\|_{{\rm Lip}_{\rho,\gamma}} 
(1+ k^{\gamma/2}\E^{1/2}\|\xi\|^{2\gamma})
(1+\E^{1/2}\|\xi\|^{2\gamma})
(\E^{1/2}\|\xi\|^2)^{s}. 
\end{align*} 
\end{theorem}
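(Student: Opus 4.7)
The strategy is to reduce the bias to an iterated operator $\mathcal{B}^{k+1}$, rewrite it as an expected iterated finite difference, and then exploit a symmetry-based cancellation before applying Hölder smoothness of $f^{(k+1)}$.

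First I would verify a telescoping identity. Since $\mathcal{T}=\mathcal{I}+\mathcal{B}$, multiplying $\mathcal{T}$ through the partial sum $f_k=\sum_{j=0}^k (-1)^j\mathcal{B}^j f$ collapses all but the endpoints, giving $\mathcal{T}f_k = f + (-1)^k \mathcal{B}^{k+1}f$, so $\mathfrak{B}f_k = (-1)^k \mathcal{B}^{k+1} f$. Second, I would write $\mathcal{B}g(\theta)=\mathbb{E}[\Delta_\xi g(\theta)]$ where $\Delta_\eta g(\theta):=g(\theta+\eta)-g(\theta)$, and iterate (using Proposition \ref{T^k} and induction) to obtain
$$
\mathcal{B}^{k+1}f(\theta) = \mathbb{E}\bigl[\Delta_{\xi_1}\Delta_{\xi_2}\cdots\Delta_{\xi_{k+1}} f(\theta)\bigr],
$$
for i.i.d.\ copies $\xi_1,\dots,\xi_{k+1}$ of $\xi$. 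Third, the standard integral representation of an iterated finite difference (proved by induction from $\Delta_\eta g(\theta)=\int_0^1 g'(\theta+s\eta)\eta\,ds$) gives
$$
\Delta_{\eta_1}\cdots\Delta_{\eta_{k+1}}f(\theta) = \int_{[0,1]^{k+1}} f^{(k+1)}\Bigl(\theta+\sum_i s_i\eta_i\Bigr)(\eta_1,\dots,\eta_{k+1})\,ds.
$$

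The crucial step is the symmetrization. Since $f^{(k+1)}(\theta)$ is a fixed symmetric $(k+1)$-linear form, it is odd under $\xi_1\mapsto -\xi_1$, while the law of $(\xi_1,\dots,\xi_{k+1})$ is invariant under this sign flip. Hence
$$
\mathbb{E}\bigl[f^{(k+1)}(\theta)(\xi_1,\dots,\xi_{k+1})\bigr]=0,
$$
so writing $\eta:=\sum_i s_i \xi_i$ we may replace $f^{(k+1)}(\theta+\eta)$ by the increment $f^{(k+1)}(\theta+\eta)-f^{(k+1)}(\theta)$ inside the expectation. This is exactly what promotes the naive operator-norm bound of order $(\mathbb{E}^{1/2}\|\xi\|^2)^{k+1}$ to the desired $(\mathbb{E}^{1/2}\|\xi\|^2)^{s}$: the Hölder condition on $f^{(k+1)}$ furnishes the extra factor $\|\eta\|^\rho$.

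For the last step I would apply the operator-norm bound together with the definition of $\|\cdot\|_{\mathrm{Lip}_{\rho,\gamma}}$ (Lemma \ref{Taylor_remainder}-style estimate) to get
$$
\bigl|(f^{(k+1)}(\theta+\eta)-f^{(k+1)}(\theta))(\xi_1,\dots,\xi_{k+1})\bigr|\;\leq\; \|f^{(k+1)}\|_{\mathrm{Lip}_{\rho,\gamma}}(1\vee\|\theta\|\vee\|\theta+\eta\|)^\gamma\|\eta\|^\rho\prod_{i=1}^{k+1}\|\xi_i\|,
$$
split the growth factor via $(1\vee\|\theta\|\vee\|\theta+\eta\|)^\gamma\leq 2^\gamma(1\vee\|\theta\|)^\gamma(1+\|\eta\|^\gamma)$, and apply Cauchy--Schwarz. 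The ingredients are: $\mathbb{E}\prod_i\|\xi_i\|^2=(\mathbb{E}\|\xi\|^2)^{k+1}$ by independence; $\eta$ is Gaussian with covariance $(\sum s_i^2)\Sigma$, so $\eta\stackrel{d}{=}c\xi$ for some $c\leq\sqrt{k+1}$, giving $\mathbb{E}\|\eta\|^{2\gamma}\leq (k+1)^\gamma\mathbb{E}\|\xi\|^{2\gamma}$; and the Jensen bound $\mathbb{E}\|\eta\|^{2\rho}\leq(\mathbb{E}\|\eta\|^2)^\rho\leq(k+1)^\rho(\mathbb{E}\|\xi\|^2)^\rho$ since $\rho\leq 1$. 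Integrating over $s\in[0,1]^{k+1}$ (volume one) and collecting terms yields the stated product-form bound in $(1+k^{\gamma/2}\mathbb{E}^{1/2}\|\xi\|^{2\gamma})(1+\mathbb{E}^{1/2}\|\xi\|^{2\gamma})(\mathbb{E}^{1/2}\|\xi\|^2)^s$.

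The main obstacle is the bookkeeping in the moment calculation: one must distribute the Hölder exponents so that no exponential-in-$k$ constant appears (naive Cauchy--Schwarz on $\mathbb{E}\prod\|\xi_i\|^4$ would introduce $C^{k+1}$ via the Kahane--Khintchine constant at degree four). The cure is to control $\|\eta\|^{2\rho}$ using subadditivity of $x\mapsto x^\rho$ on $[0,\infty)$ for $\rho\leq 1$, together with the dimension-free Gaussian moment equivalence $\mathbb{E}\|\xi\|^{2+2\rho}\lesssim(\mathbb{E}\|\xi\|^2)^{1+\rho}$, so that every use of Hölder's inequality involves only the second moment of $\|\xi\|$ plus a mild $(k+1)^{O(1)}$ factor.
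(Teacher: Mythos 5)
Your proposal is correct in its essentials, and it reaches the bound by a genuinely different route in the final estimation step. Both you and the paper reduce the bias to $(-1)^k\B^{k+1}f$ (your telescoping computation reproduces \eqref{bias_111}) and both pass through the integral representation of an iterated finite difference. The difference is where the crucial cancellation happens. The paper applies the $k$-fold representation to $\B^k f$ (Theorem \ref{repr_B^k}), proves a ${\rm Lip}_{\rho,\gamma}$ bound for $(\B^k f)'$ (Proposition \ref{B^kf'prop}), and then applies one last $\B$ so that $\B^{k+1}f(\theta)=\E S_{\B^k f}(\theta;\xi)$: the linear term vanishes because $\E\xi=0$, and the H\"older increment is taken in the direction of a single fresh copy of $\xi$, independent of everything in the multilinear form. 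You instead go directly to the $(k+1)$-fold representation and cancel the constant term $\E f^{(k+1)}(\theta)(\xi_1,\dots,\xi_{k+1})=0$ (your sign-flip argument works; so does plain multilinearity plus independence, as in Corollary \ref{bias_polynomial}), then invoke H\"older continuity of $f^{(k+1)}$ at the shifted base point $\theta+\eta$, $\eta=\sum_i s_i\xi_i$. Your route is more direct — it bypasses Theorem \ref{repr_deriv_B^k} and Proposition \ref{B^kf'prop} entirely — but it pays a small quantitative price that you should acknowledge: since $\eta$ has, conditionally on $s$, the law of $(\sum_i s_i^2)^{1/2}\xi$, the factor $\E\|\eta\|^{2\rho}$ contributes $(k+1)^{\rho}(\E\|\xi\|^2)^{\rho}$, so your final bound carries an extra $(k+1)^{\rho/2}\leq\sqrt{k+1}$ not present in the theorem as stated (visible already at $\gamma=0$); and because $\|\eta\|^{\rho}$, the growth factor $(1\vee\|\theta\|\vee\|\theta+\eta\|)^{\gamma}$, and $\prod_i\|\xi_i\|$ are all correlated, the one extra Cauchy–Schwarz needed to separate them produces fourth moments of $\|\xi\|^{2\gamma}$-type quantities, so you recover the exact product form $(1+k^{\gamma/2}\E^{1/2}\|\xi\|^{2\gamma})(1+\E^{1/2}\|\xi\|^{2\gamma})$ only up to $\gamma$-dependent constants via Gaussian moment equivalence. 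Neither issue affects the $(\E^{1/2}\|\xi\|^2)^{s}$ rate or the polynomial-in-$k$ prefactor, which is all the downstream results use, but the paper's two-step structure (deterministic Lipschitz norm of $(\B^k f)'$ first, single-$\xi$ Taylor remainder second) is what yields the cleaner constants.
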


By a straightforward simple computation, the bias of 
estimator $f_k(X)$ is equal to 
\begin{equation}
\label{bias_111}
{\frak B}f_k(\theta)=\E f_k(X)-f(\theta)= (-1)^k \B^{k+1}f(\theta).
\end{equation}
This leaves us with the problem of bounding $\B^{k+1}f(\theta)$ 
for a sufficiently smooth function $f.$ By Newton's Binomial Formula,
for all $k\geq 1,$
\begin{equation}
\label{Newton}
\B^k f(\theta)= \sum_{j=0}^k (-1)^{k-j}{k\choose j} T^{j}f(\theta), \theta \in E.
\end{equation}
It follows from representation \eqref{Newton} and bound \eqref{bd_T^k} that 
\begin{equation}
\label{bd_B^k}
\|\B^kg\|_{L_{\infty,\gamma}}\leq  2^{\gamma}
\sum_{j=0}^k {k\choose j}(1+j^{\gamma/2}\E\|\xi\|^{\gamma})\|g\|_{L_{\infty,\gamma}}
\leq 
2^{k+\gamma}\|g\|_{L_{\infty,\gamma}}(1+k^{\gamma/2}\E\|\xi\|^{\gamma}).
\end{equation}

\begin{remark}
\normalfont
Define $\hat \theta^{(k)}:= \theta + \sum_{j=1}^k \xi_j, k\geq 1$ and $\hat \theta^{(0)}:=\theta.$
Then $\hat \theta^{(1)}=\hat \theta =X$ is the maximum likelihood estimator of parameter $\theta,$ 
$\hat \theta^{(2)}$ is a parametric bootstrap estimator corresponding to $\hat \theta,$
and 
$\hat \theta^{(k)}, k\geq 2$ could be viewed as successive iterations of parametric bootstrap for Gaussian shift model $X\sim {\mathcal N}(\theta, \Sigma), \theta \in E.$ Similar sequence of bootstrap estimators (that form a Markov chain)
was studied in \cite{Koltchinskii_2017} in the case of covariance estimation and it was called {\it a bootstrap chain.}
It immediately follows from \eqref{Newton} and Proposition \ref{T^k} that 
\begin{equation}
\label{finite_diff_AAA}
\B^k f(\theta)= {\mathbb E}_{\theta}\sum_{j=0}^k (-1)^{k-j}{k\choose j} f(\hat \theta^{(j)}), \theta \in E,
\end{equation}
which means that $\B^k f(\theta)$ is equal to the expectation of the $k$-th order difference of sequence $f(\hat \theta^{j}), j\geq 0.$ The bias reduction method studied in this section is a special 
case of a more general bootstrap chain bias reduction developed in the case of estimation 
of functionals of covariance in \cite{Koltchinskii_2017}. Operators similar to $\B^k$ were used 
also in \cite{Jiao} in the problem of bias reduction in estimation of $f(\theta),$ where $\theta$
is the parameter of binomial model. In this case, $\T$ maps function $f$ to the corresponding 
Bernstein polynomial and bounds on $\B^k f(\theta)$ could be obtained using some results in approximation theory.
\end{remark}

For sufficiently smooth functions $f,$ we will derive a more convenient 
integral representation of functions $\B^k f$ that would yield 
sharper bounds on their $L_{\infty,\gamma}$ norms.

\begin{theorem}
\label{repr_B^k}
Suppose $f\in C^{k,\gamma}(E)$ for some $\gamma\geq 0.$
Then 
$$
\B^k f(\theta)= \E f^{(k)}\biggl(\theta+ \sum_{j=1}^k \tau_j \xi_j\biggr)(\xi_1,\dots, \xi_k), \theta\in E,
$$  
where $\tau_{1},\dots,\tau_{k}\sim U[0,1]$ are i.i.d. random variables independent of $\xi_{1},\dots,\xi_{k}.$
\end{theorem}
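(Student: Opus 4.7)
The natural approach is induction on $k$, with the key analytic tool being a single application of the fundamental theorem of calculus along a line segment at each step. Introduce, for $k\geq 1$, the auxiliary quantity
$$
F_k(\theta):= \E f^{(k)}\Bigl(\theta + \sum_{j=1}^{k}\tau_j \xi_j\Bigr)(\xi_1,\dots,\xi_k),
$$
with $F_0 := f$. The goal is to show $F_k = \B^{k}f$ for every $k\geq 0$. The polynomial growth built into $f\in C^{k,\gamma}(E)$, together with finiteness of all Gaussian moments $\E\|\xi\|^{p}$, will make every expectation below absolutely convergent and will justify all uses of Fubini's theorem without further comment.

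For the base case $k=1$, apply the FTC to $\phi(t):= f(\theta + t\xi_1)$, which is $C^1$ on $[0,1]$ with $\phi'(t) = f'(\theta+t\xi_1)(\xi_1)$. This gives $f(\theta+\xi_1) - f(\theta) = \int_0^1 f'(\theta+t\xi_1)(\xi_1)\,dt$. Taking $\E$ with respect to $\xi_1$ and rewriting $\int_0^1 \cdot\, dt$ as $\E_{\tau_1}[\cdot]$ for $\tau_1\sim U[0,1]$ independent of $\xi_1$ yields $\B f(\theta) = \T f(\theta) - f(\theta) = F_1(\theta)$.

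For the inductive step, assume $\B^{k}f = F_k$ and write $\B^{k+1} f = \B F_k = \T F_k - F_k$. Fix independent $\xi_{k+1}\sim\LL(\xi)$ and $\tau_{k+1}\sim U[0,1]$, both independent of $(\xi_1,\dots,\xi_k,\tau_1,\dots,\tau_k)$. By definition,
$$
F_k(\theta+\xi_{k+1}) - F_k(\theta) = \E\bigl[f^{(k)}\bigl(\theta + \xi_{k+1} + {\textstyle\sum_{j=1}^k}\tau_j\xi_j\bigr) - f^{(k)}\bigl(\theta + {\textstyle\sum_{j=1}^k}\tau_j\xi_j\bigr)\bigr](\xi_1,\dots,\xi_k),
$$
where the expectation is over $(\xi_1,\dots,\xi_k,\tau_1,\dots,\tau_k)$. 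Apply the FTC to
$$
\psi(t):= f^{(k)}\Bigl(\theta+t\xi_{k+1}+\sum_{j=1}^k\tau_j\xi_j\Bigr)(\xi_1,\dots,\xi_k),
$$
whose derivative, by the chain rule and multilinearity, equals $\psi'(t) = f^{(k+1)}(\theta+t\xi_{k+1}+\sum_{j=1}^{k}\tau_j\xi_j)(\xi_1,\dots,\xi_k,\xi_{k+1})$. Integrating, taking $\E$ over $\xi_{k+1}$ to form $\T F_k - F_k$, and identifying $\int_0^1 dt$ with $\E_{\tau_{k+1}}$ produces exactly $F_{k+1}(\theta)$, completing the induction.

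The only subtlety worth flagging is the justification of the interchange of $\E$ with the integral from $0$ to $1$ (equivalently, with $\E_{\tau_{k+1}}$) in the inductive step, and the interchange of $\E_{\xi_{k+1}}$ with that integral. Both follow from Fubini once one observes that $\|f^{(k+1)}\|_{L_{\infty,\gamma}}<\infty$ (from $f\in C^{k+1,\gamma}$; this can be arranged by applying the theorem to $f$ of one more degree of smoothness, or by a standard mollification argument if one insists on exactly $f\in C^{k,\gamma}$ and interprets $f^{(k)}$ in the $C^{k,\gamma}$-sense) gives
$$
|\psi'(t)|\leq \|f^{(k+1)}\|_{L_{\infty,\gamma}}\bigl(1\vee \|\theta\|\vee \|\xi_{k+1}\|\vee{\textstyle\sum_j}\|\xi_j\|\bigr)^{\gamma}\prod_{j=1}^{k+1}\|\xi_j\|,
$$
whose expectation is finite since Gaussian vectors have moments of every order. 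This is the only technical point; the rest is a bookkeeping computation driven by the FTC.
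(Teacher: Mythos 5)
Your proof is correct, but it takes a genuinely different route from the paper. The paper's argument is non-inductive: it expands $\B^k=(\T-\I)^k$ by the binomial formula, recognizes the resulting alternating sum $\sum_{(t_1,\dots,t_k)\in\{0,1\}^k}(-1)^{k-\sum t_i}\varphi(t_1,\dots,t_k)$ with $\varphi(t_1,\dots,t_k)=f(\theta+\sum_i t_i\xi_i)$ as the $k$-fold mixed finite difference $\Delta^{(1)}\cdots\Delta^{(k)}\varphi$ over the vertices of the cube $[0,1]^k$, and then applies the multivariate Newton--Leibniz formula in one shot to convert this into $\int_{[0,1]^k}\frac{\partial^k\varphi}{\partial t_1\cdots\partial t_k}\,dt_1\cdots dt_k$. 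Your induction replaces the combinatorial identity and the $k$-dimensional fundamental theorem of calculus by $k$ successive applications of the one-dimensional FTC, each consuming one new derivative of $f$ and introducing one new pair $(\xi_{j},\tau_{j})$; this is more elementary step by step, while the paper's version makes the connection to the bootstrap-chain finite-difference representation \eqref{finite_diff_AAA} transparent and delivers the formula without iteration. One remark on the point you flag as the ``only subtlety'': there is in fact nothing to arrange. The statement you are proving at level $k+1$ assumes $f\in C^{k+1,\gamma}(E)$, which contains $C^{k,\gamma}(E)$, so the induction hypothesis applies to $f$ directly and the bound $\|f^{(k+1)}\|_{L_{\infty,\gamma}}<\infty$ needed to differentiate $\psi$ and to invoke Fubini is supplied by the hypothesis of the level-$(k+1)$ statement itself; no mollification or extra degree of smoothness is required. (For the FTC itself, boundedness of $\psi'$ on $[0,1]$ for fixed $\xi_1,\dots,\xi_{k+1}$ makes $\psi$ Lipschitz, hence absolutely continuous, which suffices even without continuity of $f^{(k+1)}$.)
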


\begin{proof}
Define 
$$
\varphi(t_1,\dots, t_k):= f\biggl(\theta+ \sum_{i=1}^k t_i \xi_i\biggr), (t_1,\dots, t_k)\in [0,1]^k.
$$
It immediately follows from Proposition \ref{T^k} that
$$
\T^j f(\theta) = \E\varphi(t_1,\dots, t_k)
$$
for all $j\leq k$ and for all $(t_1,\dots, t_k)\in \{0,1\}^k$ with $\sum_{i=1}^k t_i=j.$
This allows us to rewrite representation \eqref{Newton} as follows:
$$
\B^k f(\theta)= \E\sum_{(t_1,\dots, t_k)\in \{0,1\}^k} (-1)^{k-\sum_{i=1}^k t_i} \varphi (t_1,\dots, t_k).
$$
For functions $\phi :[0,1]^k\mapsto {\mathbb R},$ define the first order difference operators $\Delta^{(i)}, i=1,\dots, k:$ 
$$\Delta^{(i)}\phi(t_{1},\dots,t_{k})\coloneqq\phi(t_{1},\dots,t_{k})\big\rvert_{t_{i}=1}-\phi(t_{1},\dots,t_{k})\big\rvert_{t_{i}=0}.
$$
It is easy to show by induction that 
$$
\Delta^{(1)} \dots \Delta^{(k)} \phi = \sum_{(t_1,\dots, t_k)\in \{0,1\}^k} (-1)^{k-\sum_{i=1}^k t_i} \phi (t_1,\dots, t_k),
$$
implying that 
$$
\B^k f(\theta)= \E\Delta^{(1)} \dots \Delta^{(k)} \varphi.
$$
For $f\in C^{k,\gamma}(E),$ the function $\varphi$ is $k$ times continuously 
differentiable on $[0,1]^k$ with 
$$
\frac{\partial^k \varphi(t_1,\dots, t_k)}{\partial t_1 \dots \partial t_k}= 
f^{(k)}\biggl(\theta+ \sum_{j=1}^k t_j \xi_j\biggr)(\xi_1,\dots, \xi_k).   
$$
By generalized Newton-Leibnitz formula, 
$$
\Delta^{(1)} \dots \Delta^{(k)} \varphi = \int_{0}^1 \dots \int_0^1 \frac{\partial^k \varphi(t_1,\dots, t_k)}{\partial t_1 \dots \partial t_k} dt_1 \dots dt_k. 
$$
Therefore,
$$
\B^k f(\theta)= \E\int_{0}^1 \dots \int_0^1 
f^{(k)}\biggl(\theta+ \sum_{j=1}^k t_j \xi_j\biggr)(\xi_1,\dots, \xi_k)
dt_1 \dots dt_k,
$$
which implies the result.

\qed
\end{proof}

\begin{corollary}
\label{bias_polynomial}
Let $f:E\mapsto {\mathbb R}$ be a polynomial of degree $k+1\geq 1.$ 
Then $\B^{k+1} f =0$ and, as a consequence, $f_{k}(X)$ is an unbiased 
estimator of $f(\theta).$ Moreover, $\B^j f =0$ for all $j>(k+1)/2$ implying 
that $f_{[\frac{k+1}{2}]}(X)$ is an unbiased estimator of  $f(\theta).$
\end{corollary}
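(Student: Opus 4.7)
The plan is to combine Theorem \ref{repr_B^k} with the polynomial structure of $f$, and for the stronger second claim to exploit a simple combinatorial vanishing argument coming from $\E\xi = 0$ and the independence of the $\xi_i$.

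For the first assertion, I apply Theorem \ref{repr_B^k} with $k+1$ in place of $k$ to write
\[
\B^{k+1} f(\theta) = \E\, f^{(k+1)}\!\Bigl(\theta+ \textstyle\sum_{j=1}^{k+1} \tau_j \xi_j\Bigr)(\xi_1,\ldots,\xi_{k+1}).
\]
Since $\deg(f)=k+1$, the derivative $M:=f^{(k+1)}$ is a constant symmetric $(k+1)$-linear form (it does not depend on its evaluation point), so the integrand reduces to $M(\xi_1,\ldots,\xi_{k+1})$. The variables $\xi_1,\ldots,\xi_{k+1}$ are i.i.d., mean zero, and independent of the $\tau_j$; by multilinearity I pull expectation through one slot at a time to obtain $M(\E\xi_1,\ldots,\E\xi_{k+1})=0$. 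The bias identity \eqref{bias_111} then gives ${\frak B}f_k(\theta) = (-1)^k\B^{k+1}f(\theta)=0$, so $f_k(X)$ is unbiased.

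For the stronger claim $\B^j f = 0$ whenever $j > (k+1)/2$, I again invoke Theorem \ref{repr_B^k} and Taylor-expand $f^{(j)}$ around $\theta$. Because $f$ has degree $k+1$, the map $h\mapsto f^{(j)}(\theta+h)$ is a polynomial of degree $k+1-j$, so, identifying $(f^{(j)})^{(\ell)}$ with $f^{(j+\ell)}$ and expanding $h=\sum_i \tau_i\xi_i$ by multilinearity, I obtain
\[
f^{(j)}\!\Bigl(\theta + \textstyle\sum_{i=1}^{j}\tau_i\xi_i\Bigr)(\xi_1,\ldots,\xi_j)
= \sum_{\ell=0}^{k+1-j} \frac{1}{\ell!} \sum_{i_1,\ldots,i_\ell=1}^{j} \tau_{i_1}\cdots\tau_{i_\ell}\, f^{(j+\ell)}(\theta)(\xi_{i_1},\ldots,\xi_{i_\ell},\xi_1,\ldots,\xi_j).
\]
The key observation is that, in the argument tuple of $f^{(j+\ell)}(\theta)$, the variable $\xi_p$ for $p\in\{1,\ldots,j\}$ appears exactly $1 + \#\{\alpha:i_\alpha=p\}$ times. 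If this count equals one for some $p$ (i.e., $p$ is missed by $i_1,\ldots,i_\ell$), then the expression is linear in $\xi_p$ alone while being independent of it in the other slots, so conditioning on the remaining variables and using $\E\xi_p=0$ forces the term to vanish. Thus only indexings $(i_1,\ldots,i_\ell)$ hitting every $p\in\{1,\ldots,j\}$ can contribute, which requires $\ell\geq j$. When $j>(k+1)/2$ we have $k+1-j<j$, so no $\ell$ in the Taylor range satisfies $\ell\geq j$, and hence $\B^j f \equiv 0$. Finally, since $[(k+1)/2]+1 > (k+1)/2$ for every integer $k\geq 0$, the bias identity \eqref{bias_111} gives that $f_{[(k+1)/2]}(X)$ is unbiased.

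The main point requiring care is the multilinear Taylor expansion combined with the ``each $\xi_p$ must appear at least twice'' observation, which upgrades the trivial bound $j > k+1$ (coming from $f^{(j)}\equiv 0$) to the sharper $j>(k+1)/2$; no Gaussian or Wick-type combinatorics are needed, only independence and $\E\xi=0$.
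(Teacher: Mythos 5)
Your proof is correct and follows essentially the same route as the paper: the first claim via the integral representation of $\B^{k+1}f$ with the constant form $M=f^{(k+1)}$ and independence of $\xi_1,\dots,\xi_{k+1}$, and the second via Taylor-expanding $f^{(j)}(\theta+\cdot)$ into multilinear forms of degree $\leq k+1-j<j$ and observing that some $\xi_p$ then appears only linearly, so conditioning and $\E\xi_p=0$ kill every term. The only cosmetic difference is that you write the Taylor expansion explicitly with its $1/\ell!$ coefficients where the paper merely asserts the multilinear decomposition.
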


\begin{proof}
Note that $f\in C^{s,k+1}(E)$ for all $s>0.$
Since $f^{(k+1)}(\theta)=M,\theta \in E$ for some $M\in {\mathcal M}_{k+1}(E;{\mathbb R}),$
we can use independence of $\xi_1,\dots, \xi_{k+1}$ to get 
$$
\B^{k+1} f(\theta)= \E M(\xi_1,\dots, \xi_{k+1})= M(\E \xi_1,\dots, \E \xi_{k+1})=0
$$  
and 
$$
{\frak B}f_{k}(\theta) = (-1)^{k}\B^{k+1} f(\theta)=0.
$$

To prove the second claim, note that, for $j>(k+1)/2,$ $f^{(k)}$ is a polynomial of degree
$k+1-j<j.$ Using Taylor expansion, for a fixed $\theta,$ $x\mapsto f^{(j)}(\theta+x)$ 
is also a polynomial of degree $k+1-j.$ Therefore, we can now represent 
$$   
f^{(j)}\Bigl(\theta+\sum_{i=1}^j \tau_i \xi_i\Bigr)(\xi_1,\dots, \xi_j)
$$
as a sum of multilinear forms
$$
M(\xi_{i_1}, \dots, \xi_{i_l})(\xi_1,\dots, \xi_j),\ l\leq k+1-j <j, 1\leq i_1,\dots, i_l\leq j. 
$$ 
Since $\{i_1,\dots, i_l\}$ is a strict subset of $\{1,\dots, j\},$ we easily get by conditioning 
that 
$$
\E M(\tau_{i_1}\xi_{i_1}, \dots, \tau_{i_l}\xi_{i_l})(\xi_1,\dots, \xi_j)=0,
$$
which implies that 
$$
\B^{j} f(\theta)= \E f^{(j)}\Bigl(\theta+\sum_{i=1}^j \tau_i \xi_i\Bigr)(\xi_1,\dots, \xi_j)=0, \theta\in E.
$$

\qed
\end{proof}

\begin{remark}
\normalfont
Other representations of unbiased estimators of polynomials of parameter 
$\theta$ of Gaussian shift model (especially, in the case of standard model of Example \ref{Example 1})
could be found in the literature (in particular, see \cite{Ibragimov_Khasm_Nemirov}). 
\end{remark}

Representation of Theorem \ref{repr_B^k} could be now used to provide 
an upper bound on $L_{\infty,\gamma}$-norm of function $\B^k f.$

\begin{proposition}
For all $\gamma\geq 0$ and all $f\in C^{k,\gamma}(E),$ the following bound holds:
$$
\|\B^k f\|_{L_{\infty,\gamma}} \leq 2^{\gamma}
\|f^{(k)}\|_{L_{\infty, \gamma}}\Bigl(1+ k^{\gamma/2} \E^{1/2}\|\xi\|^{2\gamma}\Bigr)(\E^{1/2}\|\xi\|^2)^k.
$$
\end{proposition}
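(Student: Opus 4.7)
The starting point is the integral representation from Theorem \ref{repr_B^k}, namely
$$
\B^k f(\theta)= \E f^{(k)}\Bigl(\theta+ \sum_{j=1}^k \tau_j \xi_j\Bigr)(\xi_1,\dots, \xi_k),
$$
together with the pointwise bound on $k$-linear forms
$|f^{(k)}(x)(h_1,\dots,h_k)| \leq \|f^{(k)}\|_{L_{\infty,\gamma}}(1\vee \|x\|)^{\gamma}\prod_{j=1}^k \|h_j\|$.
Combining these two estimates reduces the problem to controlling the scalar quantity
$$
I(\theta):=\E\Bigl(1\vee \bigl\|\theta+\sum_{j=1}^k \tau_j\xi_j\bigr\|\Bigr)^{\gamma}\prod_{j=1}^k \|\xi_j\|.
$$

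The plan is then to split the $(1\vee\cdot)^{\gamma}$ factor into a part depending on $\theta$ and a part depending only on the noise. Using $1\vee \|\theta+z\|\leq (1\vee\|\theta\|)+\|z\|$ together with subadditivity of $x^{\gamma}$ (for $\gamma\leq 1$) or convexity (for $\gamma\geq 1$), I would obtain the uniform bound
$$
\Bigl(1\vee \bigl\|\theta+\textstyle\sum_j \tau_j\xi_j\bigr\|\Bigr)^{\gamma}\leq 2^{\gamma}\Bigl[(1\vee\|\theta\|)^{\gamma}+\bigl\|\textstyle\sum_j \tau_j\xi_j\bigr\|^{\gamma}\Bigr].
$$
The first resulting piece is handled by independence of $\xi_1,\dots,\xi_k$ and Jensen's inequality:
$\E\prod_{j=1}^k\|\xi_j\|=(\E\|\xi\|)^k\leq (\E^{1/2}\|\xi\|^2)^k$.

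The only non-routine step is the second piece, $\E\bigl\|\sum_j \tau_j\xi_j\bigr\|^{\gamma}\prod_j\|\xi_j\|$, and this is where the $k^{\gamma/2}$ factor in the target bound must be produced (rather than the naive $k^{\gamma}$ one would get from the triangle inequality $\|\sum_j \tau_j\xi_j\|\leq \sum_j \|\xi_j\|$). The key observation is Gaussian conditioning: given $\tau_1,\dots,\tau_k$, the vector $\sum_j \tau_j\xi_j$ is Gaussian with covariance $(\sum_j \tau_j^2)\Sigma$, so
$\E\bigl[\,\bigl\|\sum_j \tau_j\xi_j\bigr\|^{2\gamma}\bigm|\tau_1,\dots,\tau_k\bigr]=(\sum_j \tau_j^2)^{\gamma}\E\|\xi\|^{2\gamma}$.
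Since $\tau_j\in[0,1]$, we have $\sum_j \tau_j^2\leq k$, giving $\E\|\sum_j \tau_j\xi_j\|^{2\gamma}\leq k^{\gamma}\E\|\xi\|^{2\gamma}$. Applying Cauchy--Schwarz to the joint distribution,
$$
\E\bigl\|\textstyle\sum_j \tau_j\xi_j\bigr\|^{\gamma}\prod_j\|\xi_j\|\leq \bigl(\E\bigl\|\textstyle\sum_j\tau_j\xi_j\bigr\|^{2\gamma}\bigr)^{1/2}\bigl(\E\textstyle\prod_j\|\xi_j\|^2\bigr)^{1/2}\leq k^{\gamma/2}\E^{1/2}\|\xi\|^{2\gamma}\,(\E\|\xi\|^2)^{k/2},
$$
where in the second factor independence was used again.

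Combining the two pieces and dividing by $(1\vee\|\theta\|)^{\gamma}\geq 1$ yields
$$
\frac{|\B^k f(\theta)|}{(1\vee\|\theta\|)^{\gamma}}\leq 2^{\gamma}\|f^{(k)}\|_{L_{\infty,\gamma}}\bigl(1+k^{\gamma/2}\E^{1/2}\|\xi\|^{2\gamma}\bigr)(\E^{1/2}\|\xi\|^2)^k,
$$
and taking the supremum over $\theta$ gives the claim. The main obstacle, and essentially the only non-trivial point, is resisting the temptation to use the triangle inequality on $\|\sum_j \tau_j\xi_j\|$; exploiting Gaussianity conditionally on the $\tau_j$'s to get the sharp $k^{\gamma/2}$ dependence is what makes the bound match the statement.
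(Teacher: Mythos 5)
Your proof is correct and follows essentially the same route as the paper's: the integral representation of $\B^k f$ from Theorem \ref{repr_B^k}, the pointwise bound on the $k$-linear form, and the key conditional-Gaussian identity $\sum_j \tau_j\xi_j \overset{d}{=} \bigl(\sum_j \tau_j^2\bigr)^{1/2}\xi$ to obtain the $k^{\gamma/2}$ factor. The only (immaterial) difference is the order of operations — you split the $(1\vee\cdot)^{\gamma}$ factor additively before applying Cauchy--Schwarz to the second piece, whereas the paper applies Cauchy--Schwarz once to the whole expression and then splits — but both yield the stated constant.
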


\begin{proof}
Observe that  
\begin{align*}
&
\Bigl|f^{(k)}\biggl(\theta+\sum_{j=1}^k \tau_j \xi_j\biggr)(\xi_1,\dots, \xi_k)\Bigr|
\leq \|f^{(k)}\|_{L_{\infty,\gamma}} \biggl(1\vee \biggl\|\theta+\sum_{j=1}^k \tau_j \xi_j\biggr\|\biggr)^{\gamma}
\|\xi_1\|\dots \|\xi_k\|
\\
&
\leq 2^{\gamma}\|f^{(k)}\|_{L_{\infty,\gamma}} \biggl(1\vee \|\theta\|\vee \biggl\|\sum_{j=1}^k \tau_j \xi_j\biggr\|\biggr)^{\gamma}
\|\xi_1\|\dots \|\xi_k\|,
\end{align*}
implying that 
\begin{align*}
&
|\B^k f(\theta)|
\leq 
2^{\gamma}\|f^{(k)}\|_{L_{\infty,\gamma}} 
\E^{1/2}\biggl(1\vee \|\theta\|\vee \biggl\|\sum_{j=1}^k \tau_j \xi_j\biggr\|\biggr)^{2\gamma}
\E^{1/2}(\|\xi_1\|^2\dots \|\xi_k\|^2)
\\
&
\leq 2^{\gamma}\|f^{(k)}\|_{L_{\infty,\gamma}} 
\biggl((1\vee \|\theta\|)^{2\gamma} + 
\E\biggl\|\sum_{j=1}^k \tau_j \xi_j\biggr\|^{2\gamma}\biggr)^{1/2}
(\E^{1/2}\|\xi\|^2)^k.
\end{align*}
Next note that, conditionally on $\tau_1,\dots, \tau_k,$ the distribution of $\sum_{j=1}^k \tau_j \xi_j$
is the same as the distribution of r.v. $\Bigl(\sum_{j=1}^k \tau_j^2\Bigr)^{1/2}\xi.$ Therefore,
$$
\E\biggl\|\sum_{j=1}^k \tau_j \xi_j\biggr\|^{2\gamma}= 
\E\Bigl(\sum_{j=1}^k \tau_j^2\Bigr)^{\gamma} \E\|\xi\|^{2\gamma}
\leq k^{\gamma} \E\|\xi\|^{2\gamma},
$$
and we get 
\begin{align*}
&
|\B^k f(\theta)|
\leq 
2^{\gamma}\|f^{(k)}\|_{L_{\infty,\gamma}} 
\Bigl((1\vee \|\theta\|)^{\gamma}+ k^{\gamma/2} \E^{1/2}\|\xi\|^{2\gamma}\Bigr)
(\E^{1/2}\|\xi\|^2)^k.
\end{align*}
This yields the bound of the proposition.

\qed
\end{proof}

The next corollary is immediate.

\begin{corollary}
\label{cor_f_k_infty_gamma}
Suppose $\E^{1/2}\|\xi\|^2\leq 1/2.$
For all $\gamma\geq 0$ and all $f\in C^{k,\gamma}(E),$ the following bound holds:
$$
\|f_k\|_{L_{\infty,\gamma}} \leq 2^{\gamma+1}
\|f\|_{C^{k, \gamma}}\Bigl(1+ k^{\gamma/2} \E^{1/2}\|\xi\|^{2\gamma}\Bigr)
\lesssim_{\gamma} (k+1)^{\gamma/2} \|f\|_{C^{k, \gamma}}.
$$
\end{corollary}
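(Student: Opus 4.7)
The plan is to apply the triangle inequality to the partial-sum definition $f_k(\theta) = \sum_{j=0}^k (-1)^j \B^j f(\theta)$, invoke the preceding proposition for each summand, and sum the resulting geometric series.

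First, I would observe that for each $j \leq k$, since $f\in C^{k,\gamma}(E)$ we have $f \in C^{j,\gamma}(E)$ with $\|f^{(j)}\|_{L_{\infty,\gamma}} \leq \|f\|_{C^{k,\gamma}}$ by definition of the $C^{k,\gamma}$-norm. Applying the previous proposition at level $j$ (and noting that for $j=0$ the bound reduces to the trivial $\|f\|_{L_{\infty,\gamma}}\leq 2^\gamma\|f\|_{L_{\infty,\gamma}}$) yields
$$
\|\B^j f\|_{L_{\infty,\gamma}} \leq 2^\gamma \|f\|_{C^{k,\gamma}} \bigl(1 + j^{\gamma/2} \E^{1/2}\|\xi\|^{2\gamma}\bigr) \bigl(\E^{1/2}\|\xi\|^2\bigr)^j.
$$
Using the triangle inequality together with the monotonicity $j^{\gamma/2}\leq k^{\gamma/2}$ for $j\leq k$, I would then sum over $j$ to obtain
$$
\|f_k\|_{L_{\infty,\gamma}} \leq 2^\gamma \|f\|_{C^{k,\gamma}} \bigl(1 + k^{\gamma/2} \E^{1/2}\|\xi\|^{2\gamma}\bigr) \sum_{j=0}^k \bigl(\E^{1/2}\|\xi\|^2\bigr)^j.
$$
Under the hypothesis $\E^{1/2}\|\xi\|^2 \leq 1/2$, the geometric sum is bounded by $\sum_{j=0}^\infty 2^{-j} = 2$, which produces the first inequality $\|f_k\|_{L_{\infty,\gamma}}\leq 2^{\gamma+1}\|f\|_{C^{k,\gamma}}(1+k^{\gamma/2}\E^{1/2}\|\xi\|^{2\gamma})$.

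For the second ($\lesssim_\gamma$) inequality, the remaining task is to show that $\E^{1/2}\|\xi\|^{2\gamma} \lesssim_\gamma 1$ under the standing assumption. This is where Gaussian structure enters: since $\xi$ is a centered Gaussian vector in a separable Banach space, all moments of $\|\xi\|$ are equivalent (Fernique/Borell), so $(\E\|\xi\|^{2\gamma})^{1/(2\gamma)}\leq C_\gamma (\E\|\xi\|^2)^{1/2}$ for a constant $C_\gamma$ depending only on $\gamma$. Combined with $\E\|\xi\|^2\leq 1/4$, this yields $\E^{1/2}\|\xi\|^{2\gamma}\leq C_\gamma (1/2)^\gamma\lesssim_\gamma 1$, and consequently $1+k^{\gamma/2}\E^{1/2}\|\xi\|^{2\gamma}\lesssim_\gamma (k+1)^{\gamma/2}$, completing the corollary.

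The argument is essentially routine once the previous proposition is in hand; the only genuinely non-bookkeeping step is the Gaussian $L^p$-equivalence used to absorb $\E^{1/2}\|\xi\|^{2\gamma}$ into a $\gamma$-dependent constant. I expect no real obstacle beyond carefully tracking constants and handling the $j=0$ boundary case of the previous proposition.
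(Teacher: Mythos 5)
Your proof is correct and follows the argument the paper intends (the paper simply declares the corollary "immediate" after the preceding proposition): triangle inequality over the partial sum, the proposition applied at each level $j\le k$, the geometric series bounded by $2$, and Gaussian moment equivalence (or, for $\gamma\le 1$, just Jensen) to absorb $\E^{1/2}\|\xi\|^{2\gamma}$ into a $\gamma$-dependent constant. No gap.
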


\begin{theorem}
\label{repr_deriv_B^k}
Suppose $f\in C^{k+1,\gamma}(E)$ for some $\gamma\geq 0.$
Then $\theta \mapsto \B^{k} f(\theta)$ is Fr\'echet 
differentiable with continuous derivative
\begin{equation}
\label{deriv_formula}
(\B^k f)'(\theta)(h)= 
\E f^{(k+1)}\biggl(\theta+ \sum_{j=1}^k \tau_j \xi_j\biggr)(\xi_1,\dots, \xi_k, h), \theta, h\in E,
\end{equation}
where $\tau_1,\dots, \tau_k$
are i.i.d. random variables uniformly distributed in $[0,1]$ and independent 
of $\xi_1,\dots, \xi_k.$
\end{theorem}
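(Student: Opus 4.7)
The plan is to start from the integral representation given by Theorem \ref{repr_B^k} and differentiate under the expectation, reducing the claim to a pathwise fundamental-theorem-of-calculus computation combined with dominated convergence. Write $\eta := \sum_{j=1}^k \tau_j \xi_j$, so Theorem \ref{repr_B^k} gives
$$
\B^k f(\theta) = \E F(\theta), \qquad F(\theta) := f^{(k)}(\theta + \eta)(\xi_1, \dots, \xi_k).
$$
Since $f \in C^{k+1,\gamma}(E)$, for each fixed realization of $(\tau_1,\dots,\tau_k,\xi_1,\dots,\xi_k)$ the map $\theta \mapsto f^{(k)}(\theta + \eta)$ is Fr\'echet differentiable from $E$ into ${\mathcal M}_k(E;\R)$ with derivative $f^{(k+1)}(\theta+\eta)$, interpreted as an element of $\mathcal{L}(E;{\mathcal M}_k(E;\R))\cong {\mathcal M}_{k+1}(E;\R)$.

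Applying the pathwise fundamental theorem of calculus to $s\mapsto F(\theta+sh)$ yields
$$
F(\theta+h)-F(\theta)=\int_0^1 f^{(k+1)}(\theta+sh+\eta)(\xi_1,\dots,\xi_k,h)\, ds.
$$
After taking expectations and exchanging $\E$ with the Lebesgue integral via Fubini (justified below), I subtract the candidate derivative $R(\theta)(h)$ from \eqref{deriv_formula} to obtain
$$
\B^k f(\theta+h)-\B^k f(\theta)-R(\theta)(h) = \int_0^1 \E\bigl[f^{(k+1)}(\theta+sh+\eta)-f^{(k+1)}(\theta+\eta)\bigr](\xi_1,\dots,\xi_k,h)\, ds.
$$
To establish Fr\'echet differentiability at $\theta$ with derivative $R(\theta)$, it suffices to show the right-hand side is $o(\|h\|)$ as $h\to 0$. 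For continuity of the derivative, I would bound
$$
\sup_{\|h\|\le 1}\bigl|(\B^k f)'(\theta_n)(h)-(\B^k f)'(\theta)(h)\bigr| \leq \E\bigl\|f^{(k+1)}(\theta_n+\eta)-f^{(k+1)}(\theta+\eta)\bigr\| \|\xi_1\|\cdots\|\xi_k\|
$$
for $\theta_n\to\theta$ in $E$, and invoke the continuity of $f^{(k+1)}$ pointwise combined with dominated convergence.

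The main obstacle — and the only nontrivial technical point — is producing an integrable dominating random variable so that both the exchange $\E\int = \int\E$ and the two dominated-convergence passages above are legitimate. Using the definition of $\|\cdot\|_{L_{\infty,\gamma}}$ and the operator-norm bound on $(k+1)$-linear forms, each integrand is dominated pointwise by
$$
\|f^{(k+1)}\|_{L_{\infty,\gamma}}\bigl(1\vee \|\theta\|\vee \|h\|\vee \|\eta\|\bigr)^{\gamma}\|\xi_1\|\cdots\|\xi_k\|\|h\|,
$$
and by Fernique's theorem all polynomial moments of $\|\xi\|$ (and hence of $\|\eta\|\le \sum_j\|\xi_j\|$) are finite, so the dominating variable is integrable; this gives the required $O(\|h\|)$ domination, and the pointwise convergence of the integrand to zero (as $h\to 0$ using continuity of $f^{(k+1)}$) then upgrades the bound to $o(\|h\|)$. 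The same moment estimates handle the continuity step. In short, once the pathwise differentiation is carried out, the argument runs along the same lines as the proof of Theorem \ref{repr_B^k}, with Fernique's theorem supplying the domination.
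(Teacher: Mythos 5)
Your proposal is correct and follows essentially the same route as the paper's proof: a pathwise fundamental-theorem-of-calculus identity for $f^{(k)}$ along the segment from $\theta$ to $\theta+h$, followed by dominated convergence (using the $L_{\infty,\gamma}$ bound on $f^{(k+1)}$ and the Gaussian moment bounds to supply the integrable dominating variable) to show the remainder is $o(\|h\|)$ and to establish continuity of the derivative. The only cosmetic difference is that you cite Fernique's theorem explicitly where the paper simply asserts finiteness of the expectation.
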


\begin{proof}
First note that the expression in the right hand side of \eqref{deriv_formula} 
is well defined. This easily follows from the bound 
\begin{align*}
&
\biggl\|f^{(k+1)}\biggl(\theta+ \sum_{j=1}^k \tau_j \xi_j\biggr)(\xi_1,\dots, \xi_k)
\biggr\|
\\
&
\leq \|f^{(k+1)}\|_{L_{\infty,\gamma}}\biggl(1\vee \|\theta\| \vee 
\biggl\|\sum_{j=1}^k\tau_j \xi_j\biggr\|\biggr)^{\gamma}
\|\xi_1\|\dots \|\xi_k\|
\end{align*}
whose right hand side has finite expectation. 
By Lebesgue dominated convergence theorem, this also implies 
the continuity of the function 
$
\theta \mapsto (\B^k f)'(\theta)(h)
$
defined by expression \eqref{deriv_formula}. It remains to show that this expression 
indeed provides the derivative of $\B^k f.$
To this end, observe that 
\begin{align*}
&
f^{(k)}\biggl(\theta+ h+ \sum_{j=1}^k \tau_j \xi_j\biggr)(\xi_1,\dots, \xi_k)-
f^{(k)}\biggl(\theta+ \sum_{j=1}^k \tau_j \xi_j\biggr)(\xi_1,\dots, \xi_k)
\\
&
= \int_0^1 f^{(k+1)}\biggl(\theta+ t h+ \sum_{j=1}^k \tau_j \xi_j\biggr)(\xi_1,\dots, \xi_k, h)
dt,
\end{align*}
which implies 
\begin{align*}
&
f^{(k)}\biggl(\theta+ h+ \sum_{j=1}^k \tau_j \xi_j\biggr)(\xi_1,\dots, \xi_k)-
f^{(k)}\biggl(\theta+ \sum_{j=1}^k \tau_j \xi_j\biggr)(\xi_1,\dots, \xi_k)
\\
&
-f^{(k+1)}\biggl(\theta+\sum_{j=1}^k \tau_j \xi_j\biggr)(\xi_1,\dots, \xi_k, h)
\\
&
= \int_0^1 \biggl[f^{(k+1)}\biggl(\theta+ t h+ \sum_{j=1}^k \tau_j \xi_j\biggr)(\xi_1,\dots, \xi_k, h)
- f^{(k+1)}\biggl(\theta+\sum_{j=1}^k \tau_j \xi_j\biggr)(\xi_1,\dots, \xi_k, h)\biggr]
dt
\end{align*}
and 
\begin{align*}
&
\B^kf(\theta+h)-\B^k f(\theta)- (\B^k f)'(\theta)(h)
\\
&
= \E\int_0^1 \biggl[f^{(k+1)}\biggl(\theta+ t h+ \sum_{j=1}^k \tau_j \xi_j\biggr)(\xi_1,\dots, \xi_k, h)
- f^{(k+1)}\biggl(\theta+\sum_{j=1}^k \tau_j \xi_j\biggr)(\xi_1,\dots, \xi_k, h)\biggr]
dt.
\end{align*}
Therefore, 
\begin{align*}
&
\Bigl|\B^kf(\theta+h)-\B^k f(\theta)- (\B^k f)'(\theta)(h)\Bigr|
\\
&
\leq \E\int_0^1 \biggl \|f^{(k+1)}\biggl(\theta+ t h+ \sum_{j=1}^k \tau_j \xi_j\biggr)
- f^{(k+1)}\biggl(\theta+\sum_{j=1}^k \tau_j \xi_j\biggr)\biggr\| dt\|\xi_1\|\dots \|\xi_k\| \|h\|.
\end{align*}
It remains to observe that by continuity of $f^{(k+1)}$ 
$$
\biggl \|f^{(k+1)}\biggl(\theta+ t h+ \sum_{j=1}^k \tau_j \xi_j\biggr)
- f^{(k+1)}\biggl(\theta+\sum_{j=1}^k \tau_j \xi_j\biggr)\biggr\| \to 0\ {\rm as}\ h\to 0, t\in [0,1],
$$
and to use Lebesgue dominated convergence to conclude that 
$$
\E\int_0^1 \biggl \|f^{(k+1)}\biggl(\theta+ t h+ \sum_{j=1}^k \tau_j \xi_j\biggr)
- f^{(k+1)}\biggl(\theta+\sum_{j=1}^k \tau_j \xi_j\biggr)\biggr\| dt\|\xi_1\|\dots \|\xi_k\|
=o(1)\ {\rm as}\ h\to 0.
$$
This proves Fr\'echet differentiability of the function $\theta \mapsto \B^{k} f(\theta)$
along with formula \eqref{deriv_formula} for its derivative.

\qed
\end{proof}

The following corollary is immediate.

\begin{corollary}
\label{repr_remainder_B^k}
Suppose $f\in C^{k+1,\gamma}(E)$ for some $\gamma\geq 0.$ Then
\begin{equation}
\label{remainder_B^k_formula}
S_{\B^k f}(\theta;h)=
\E S_{f^{(k)}}\biggl(\theta+ \sum_{j=1}^k \tau_j \xi_j; h\biggr)(\xi_1,\dots, \xi_k), 
\theta, h\in E.
\end{equation}
\end{corollary}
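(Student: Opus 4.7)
The plan is to unfold the definition $S_{\B^k f}(\theta;h)=\B^k f(\theta+h)-\B^k f(\theta)-(\B^k f)'(\theta)(h)$ and apply Theorem \ref{repr_B^k} to the first two terms and Theorem \ref{repr_deriv_B^k} to the third term, after which the desired formula falls out by linearity of expectation.

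More concretely, fix $\xi_1,\dots,\xi_k$ and $\tau_1,\dots,\tau_k$ and consider the auxiliary scalar function
$$
\phi(x):= f^{(k)}\Bigl(x+\sum_{j=1}^k\tau_j\xi_j\Bigr)(\xi_1,\dots,\xi_k),\qquad x\in E.
$$
Since $f\in C^{k+1,\gamma}(E)$, the chain rule (combined with the identification of $f^{(k+1)}(y)$ as a symmetric $(k{+}1)$-linear form) gives the Fr\'echet derivative
$$
\phi'(x)(h)=f^{(k+1)}\Bigl(x+\sum_{j=1}^k\tau_j\xi_j\Bigr)(\xi_1,\dots,\xi_k,h),
$$
so that by definition of the first-order Taylor remainder,
$$
\phi(x+h)-\phi(x)-\phi'(x)(h)=S_{f^{(k)}}\Bigl(x+\sum_{j=1}^k\tau_j\xi_j;\,h\Bigr)(\xi_1,\dots,\xi_k),
$$
using the symmetry of $f^{(k+1)}$ to write $f^{(k+1)}(y)(h)(\xi_1,\dots,\xi_k)=f^{(k+1)}(y)(\xi_1,\dots,\xi_k,h)$.

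Next, I take expectations in the previous identity with $x=\theta$. By Theorem \ref{repr_B^k}, $\E\phi(\theta)=\B^k f(\theta)$ and $\E\phi(\theta+h)=\B^k f(\theta+h)$; by Theorem \ref{repr_deriv_B^k}, $\E\phi'(\theta)(h)=(\B^k f)'(\theta)(h)$. Linearity of expectation then yields
$$
\B^k f(\theta+h)-\B^k f(\theta)-(\B^k f)'(\theta)(h)=\E S_{f^{(k)}}\Bigl(\theta+\sum_{j=1}^k\tau_j\xi_j;\,h\Bigr)(\xi_1,\dots,\xi_k),
$$
which is precisely \eqref{remainder_B^k_formula}.

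The only nontrivial step is to justify splitting the expectation of the three-term combination into three separate expectations, i.e.\ to verify integrability of each term. But the hypothesis $f\in C^{k+1,\gamma}(E)$ gives $\|f^{(k)}\|_{L_{\infty,\gamma}},\|f^{(k+1)}\|_{L_{\infty,\gamma}}<\infty$, and the bounds $|f^{(k)}(y)(\xi_1,\dots,\xi_k)|\leq\|f^{(k)}\|_{L_{\infty,\gamma}}(1\vee\|y\|)^\gamma\|\xi_1\|\cdots\|\xi_k\|$ (and its analogue for $f^{(k+1)}$) combined with Gaussian integrability of $\|\xi\|$ make each of the three expectations finite, exactly as in the proofs of Theorems \ref{repr_B^k} and \ref{repr_deriv_B^k}. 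This is the same integrability verification already carried out there, so no new obstacle arises, and the proof is complete. \QED
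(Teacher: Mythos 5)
Your proposal is correct and follows exactly the route the paper intends: the paper declares the corollary ``immediate'' from Theorems \ref{repr_B^k} and \ref{repr_deriv_B^k}, and your argument simply unfolds the definition of $S_{\B^k f}(\theta;h)$, substitutes the two integral representations, and regroups under the expectation. The integrability check you include is the same one already carried out in the proof of Theorem \ref{repr_deriv_B^k}, so nothing new is needed.
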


\begin{proposition}
\label{B^kf'prop}
Let $s=k+1+\rho$ for some $\rho\in (0,1]$ and let $\gamma\geq 0.$
Suppose that $f\in C^{s,\gamma}(E).$
Then, for all $j=1,\dots, k,$
\begin{align}
\label{B^jf'L}
\|(\B^j f)'\|_{L_{\infty},\gamma} \leq 
2^{\gamma}\|f^{(j+1)}\|_{L_{\infty,\gamma}} 
(1+ j^{\gamma/2}\E^{1/2}\|\xi\|^{2\gamma})
(\E^{1/2}\|\xi\|^2)^j.
\end{align}
Moreover, for all $j=1,\dots, k-1$ 
\begin{align}
\label{B^jf'_norm}
&
\|(\B^j f)'\|_{{\rm Lip}_{1,\gamma}} 
\leq 
2^{\gamma}\|f^{(j+1)}\|_{{\rm Lip}_{1,\gamma}} 
(1+ j^{\gamma/2}\E^{1/2}\|\xi\|^{2\gamma}) 
(\E^{1/2}\|\xi\|^2)^j
\end{align}
and 
\begin{align}
\label{B^kf'_norm}
&
\|(\B^k f)'\|_{{\rm Lip}_{\rho,\gamma}} 
\leq 
2^{\gamma}\|f^{(k+1)}\|_{{\rm Lip}_{\rho,\gamma}} 
(1+ k^{\gamma/2}\E^{1/2}\|\xi\|^{2\gamma}) 
(\E^{1/2}\|\xi\|^2)^k.
\end{align}
\end{proposition}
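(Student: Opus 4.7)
The plan is to apply the derivative representation
$$
(\B^j f)'(\theta)(h) = \E f^{(j+1)}\biggl(\theta + \sum_{i=1}^j \tau_i \xi_i\biggr)(\xi_1,\dots,\xi_j, h)
$$
of Theorem \ref{repr_deriv_B^k} and then essentially repeat, for each of the three bounds, the estimate already carried out in the preceding proposition for $\|\B^j f\|_{L_{\infty,\gamma}}$. The additional argument $h$ only contributes a multiplicative factor $\|h\|$ that disappears once we take $\sup_{\|h\|\le 1}$ to pass from the $(j+1)$-linear form to its operator norm; the rest is bookkeeping.

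For bound \eqref{B^jf'L} I would bound the integrand pointwise by
$$
\|f^{(j+1)}\|_{L_{\infty,\gamma}} (1\vee\|\theta+\textstyle\sum_i\tau_i\xi_i\|)^{\gamma}\|\xi_1\|\cdots\|\xi_j\|\|h\|
\le 2^{\gamma}\|f^{(j+1)}\|_{L_{\infty,\gamma}} (1\vee\|\theta\|\vee\|\textstyle\sum_i\tau_i\xi_i\|)^{\gamma}\|\xi_1\|\cdots\|\xi_j\|\|h\|,
$$
using $\|a+b\|\le 2(\|a\|\vee\|b\|)$. Cauchy--Schwarz in expectation separates the two factors; the distributional identity $\sum_{i=1}^j\tau_i\xi_i \stackrel{d}{=} (\sum_{i=1}^j\tau_i^2)^{1/2}\xi$ conditionally on $(\tau_i)$ yields $\E\|\sum_i\tau_i\xi_i\|^{2\gamma}\le j^{\gamma}\E\|\xi\|^{2\gamma}$, and independence of the $\xi_i$ gives $\E(\|\xi_1\|^2\cdots\|\xi_j\|^2)=(\E\|\xi\|^2)^j$. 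Taking $\sup_{\|h\|\le 1}$ and dividing by $(1\vee\|\theta\|)^{\gamma}\ge 1$ produces \eqref{B^jf'L}.

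For the Lipschitz bounds \eqref{B^jf'_norm} and \eqref{B^kf'_norm} I would form the increment at a fixed $h$,
$$
((\B^j f)'(\theta)-(\B^j f)'(\theta'))(h) = \E\bigl[f^{(j+1)}(\theta+U)-f^{(j+1)}(\theta'+U)\bigr](\xi_1,\dots,\xi_j,h),
$$
with $U:=\sum_{i=1}^j\tau_i\xi_i$, and bound the integrand by
$$
\|f^{(j+1)}\|_{{\rm Lip}_{\rho',\gamma}}(1\vee\|\theta+U\|\vee\|\theta'+U\|)^{\gamma}\|\theta-\theta'\|^{\rho'}\|\xi_1\|\cdots\|\xi_j\|\|h\|,
$$
with $\rho'=1$ for $j\le k-1$ (the finiteness of $\|f^{(j+1)}\|_{{\rm Lip}_{1,\gamma}}$ is ensured since $f^{(j+2)}\in L_{\infty,\gamma}$ for such $j$) and $\rho'=\rho$ for $j=k$. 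The same $\|a+b\|\le 2(\|a\|\vee\|b\|)$ device produces $2^{\gamma}(1\vee\|\theta\|\vee\|\theta'\|\vee\|U\|)^{\gamma}$, after which the estimation is identical to that of \eqref{B^jf'L}, with $(1\vee\|\theta\|\vee\|\theta'\|)^{2\gamma}+\E\|U\|^{2\gamma}\le(1\vee\|\theta\|\vee\|\theta'\|)^{2\gamma}+j^{\gamma}\E\|\xi\|^{2\gamma}$ replacing the corresponding one-point quantity. Taking $\sup_{\|h\|\le 1}$ and dividing by $(1\vee\|\theta\|\vee\|\theta'\|)^{\gamma}\|\theta-\theta'\|^{\rho'}$ yields the two Lipschitz bounds.

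No step presents a genuine obstacle; this proposition is a pattern-matched extension of the earlier $L_{\infty,\gamma}$ bound on $\B^j f$, and the only subtlety is tracking the $2^{\gamma}$ constants and the $j^{\gamma/2}$ factor coming from $(\sum_i\tau_i^2)^{1/2}\le j^{1/2}$, together with selecting the appropriate H\"older exponent for $f^{(j+1)}$ in the two regimes $j\le k-1$ and $j=k$.
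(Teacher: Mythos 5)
Your proposal is correct and follows essentially the same route as the paper: the paper also starts from the derivative representation of Theorem \ref{repr_deriv_B^k}, applies the Lipschitz (resp. $L_{\infty,\gamma}$) bound on $f^{(j+1)}$ pointwise, uses $(1\vee\|\theta+U\|)^{\gamma}\le 2^{\gamma}(1\vee\|\theta\|\vee\|U\|)^{\gamma}$, Cauchy--Schwarz, the conditional identity $\sum_i\tau_i\xi_i\stackrel{d}{=}(\sum_i\tau_i^2)^{1/2}\xi$, and independence of the $\xi_i$, proving only the case $j=k$ explicitly and noting the other bounds are analogous. Your handling of the two H\"older regimes ($\rho'=1$ for $j\le k-1$, $\rho'=\rho$ for $j=k$) matches the intended argument.
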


\begin{proof}
We will prove only the last bound of the proposition. The proof of other bounds 
is similar.  
Using representation \eqref{deriv_formula}, we get
\begin{align*}
&
\|(\B^k f)'(\theta_1)-(\B^k f)'(\theta_2)\| 
\\
&
\leq 
\E \biggl\|f^{(k+1)}\biggl(\theta_1+ \sum_{j=1}^k \tau_j \xi_j\biggr)
-
f^{(k+1)}\biggl(\theta_2+ \sum_{j=1}^k \tau_j \xi_j\biggr)
\biggr\|\|\xi_1\|\dots \|\xi_k\|
\\
&
\leq 2^{\gamma}\|f^{(k+1)}\|_{{\rm Lip}_{\rho,\gamma}} \E\biggl(1\vee \|\theta_1\|\vee \|\theta_2\|\vee 
\biggl\|\sum_{j=1}^k \tau_j \xi_j\biggr\|\biggr)^{\gamma} \|\xi_1\|\dots \|\xi_k\|\|\theta_1-\theta_2\|^{\rho}
\\
&
\leq 2^{\gamma}\|f^{(k+1)}\|_{{\rm Lip}_{\rho,\gamma}} 
\E^{1/2}\biggl(1\vee \|\theta_1\|\vee \|\theta_2\|\vee 
\biggl\|\sum_{j=1}^k \tau_j \xi_j\biggr\|\biggr)^{2\gamma} 
(\E^{1/2}\|\xi\|^2)^k\|\theta_1-\theta_2\|^{\rho}
\end{align*}
Next recall that, conditionally on $\tau_1,\dots, \tau_k,$ $\sum_{j=1}^k \tau_j \xi_j$
has the same distribution as $\Bigl(\sum_{j=1}^k \tau_j^2\Bigr)^{1/2}\xi.$  
Therefore,
\begin{align*}
&
\E\biggl(1\vee \|\theta_1\|\vee \|\theta_2\|\vee 
\biggl\|\sum_{j=1}^k \tau_j \xi_j\biggr\|\biggr)^{2\gamma} 
=
\E\biggl(1\vee \|\theta_1\|\vee \|\theta_2\|\vee 
\Bigl(\sum_{j=1}^k \tau_j^2\Bigr)^{1/2}\|\xi\|
\biggr)^{2\gamma} 
\\
&
\leq 
\E\biggl(1\vee \|\theta_1\|\vee \|\theta_2\|\vee 
k^{1/2}\|\xi\|
\biggr)^{2\gamma} 
\leq 
(1\vee \|\theta_1\|\vee \|\theta_2\|)^{2\gamma} + k^{\gamma}\E\|\xi\|^{2\gamma}.
\end{align*}
Hence, we easily get 
\begin{align*}
&
\|(\B^k f)'(\theta_1)-(\B^k f)'(\theta_2)\| 
\\
&
\leq 
2^{\gamma}\|f^{(k+1)}\|_{{\rm Lip}_{\rho,\gamma}} 
\Bigl[(1\vee \|\theta_1\|\vee \|\theta_2\|)^{\gamma} + k^{\gamma/2}\E^{1/2}\|\xi\|^{2\gamma}\Bigr] 
(\E^{1/2}\|\xi\|^2)^k\|\theta_1-\theta_2\|^{\rho},
\end{align*}
implying the result.
\qed
\end{proof}

\begin{proposition}
\label{bound_B^{k+1}}
Let $s=k+1+\rho$ for some $\rho\in (0,1]$ and let $\gamma\geq 0.$
Suppose that $f\in C^{s,\gamma}(E).$
Then 
\begin{align*}
\|\B^{k+1}f\|_{L_{\infty,\gamma}}
\lesssim 
2^{\gamma}\|f^{(k+1)}\|_{{\rm Lip}_{\rho,\gamma}} 
(1+ k^{\gamma/2}\E^{1/2}\|\xi\|^{2\gamma})
(1+\E^{1/2}\|\xi\|^{2\gamma})
(\E^{1/2}\|\xi\|^2)^{s}. 
\end{align*} 
\end{proposition}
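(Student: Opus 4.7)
The plan is to exploit the identity $\B^{k+1}=\B\cdot\B^{k}=(\T-\I)\B^{k}$ in order to express $\B^{k+1}f$ as the expectation of a Taylor remainder, and then combine the integral representation from Corollary \ref{repr_remainder_B^k} with the pointwise bound of Lemma \ref{Taylor_remainder}. First, I write $\B^{k+1}f(\theta)=\E[\B^{k}f(\theta+\xi)-\B^{k}f(\theta)]$ and expand
$$
\B^{k}f(\theta+\xi)-\B^{k}f(\theta)=(\B^{k}f)'(\theta)(\xi)+S_{\B^{k}f}(\theta;\xi).
$$
The linear term vanishes in expectation, since $(\B^{k}f)'(\theta)\in E^{\ast}$ (by Theorem \ref{repr_deriv_B^k}) and $\E\xi=0$, producing the clean identity $\B^{k+1}f(\theta)=\E\,S_{\B^{k}f}(\theta;\xi)$. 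This converts the problem into a bound on the Taylor remainder of $\B^{k}f$ against a Gaussian increment.

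Next, I substitute the integral representation from Corollary \ref{repr_remainder_B^k} to get
$$
\B^{k+1}f(\theta)=\E\,S_{f^{(k)}}\Bigl(\theta+\sum_{j=1}^{k}\tau_{j}\xi_{j};\,\xi\Bigr)(\xi_{1},\dots,\xi_{k}),
$$
where $\xi$ is independent of $(\xi_{1},\dots,\xi_{k},\tau_{1},\dots,\tau_{k})$. Applying Lemma \ref{Taylor_remainder} to the real-valued function $x\mapsto f^{(k)}(x)(\xi_{1},\dots,\xi_{k})$, whose derivative has $\mathrm{Lip}_{\rho,\gamma}$-seminorm bounded by $\|f^{(k+1)}\|_{\mathrm{Lip}_{\rho,\gamma}}\|\xi_{1}\|\cdots\|\xi_{k}\|$, yields the pointwise estimate
\begin{align*}
&\Bigl|S_{f^{(k)}}\bigl(\theta+\textstyle\sum_{j}\tau_{j}\xi_{j};\xi\bigr)(\xi_{1},\dots,\xi_{k})\Bigr|\\
&\qquad\lesssim 2^{\gamma}\|f^{(k+1)}\|_{\mathrm{Lip}_{\rho,\gamma}}\,\bigl(1\vee\|\theta\|\vee\|\textstyle\sum_{j}\tau_{j}\xi_{j}\|\vee\|\xi\|\bigr)^{\gamma}\|\xi\|^{1+\rho}\|\xi_{1}\|\cdots\|\xi_{k}\|,
\end{align*}
where the $2^{\gamma}$ comes from splitting $\|\theta+\sum_{j}\tau_{j}\xi_{j}\|\leq\|\theta\|+\|\sum_{j}\tau_{j}\xi_{j}\|$.

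Finally, I apply Cauchy--Schwarz to separate the product of norms from the polynomial weight. For the product of norms, independence of $\xi,\xi_{1},\dots,\xi_{k}$ gives
$$
\bigl(\E\|\xi_{1}\|^{2}\cdots\|\xi_{k}\|^{2}\|\xi\|^{2(1+\rho)}\bigr)^{1/2}=(\E\|\xi\|^{2})^{k/2}(\E\|\xi\|^{2(1+\rho)})^{1/2}\lesssim(\E\|\xi\|^{2})^{(k+1+\rho)/2},
$$
where the last step invokes the Gaussian moment equivalence $\E\|\xi\|^{2(1+\rho)}\lesssim(\E\|\xi\|^{2})^{1+\rho}$ valid uniformly in $\rho\in(0,1]$. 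For the weight, the inequality $\max(a_{1},\dots,a_{n})^{p}\leq\sum_{i}a_{i}^{p}$ together with the distributional identity $\sum_{j}\tau_{j}\xi_{j}\stackrel{d}{=}(\sum_{j}\tau_{j}^{2})^{1/2}\xi$ (as used in Proposition \ref{B^kf'prop}) yields
$$
\E\bigl(1\vee\|\theta\|\vee\|\textstyle\sum_{j}\tau_{j}\xi_{j}\|\vee\|\xi\|\bigr)^{2\gamma}\leq(1\vee\|\theta\|)^{2\gamma}+(k^{\gamma}+1)\E\|\xi\|^{2\gamma}.
$$
Taking square roots, dividing by $(1\vee\|\theta\|)^{\gamma}$, and bounding $1+k^{\gamma/2}\E^{1/2}\|\xi\|^{2\gamma}+\E^{1/2}\|\xi\|^{2\gamma}\leq(1+k^{\gamma/2}\E^{1/2}\|\xi\|^{2\gamma})(1+\E^{1/2}\|\xi\|^{2\gamma})$ then assembles the stated inequality. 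The main obstacle I anticipate is the Gaussian moment-equivalence step: one must justify $\E\|\xi\|^{2(1+\rho)}\lesssim(\E\|\xi\|^{2})^{1+\rho}$ with an absolute constant in the Banach-space setting, which requires Borell--TIS concentration for $\|\xi\|$ rather than an elementary computation available only in finite-dimensional models.
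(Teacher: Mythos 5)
Your proof is correct and follows essentially the same route as the paper: both reduce to the identity $\B^{k+1}f(\theta)=\E S_{\B^k f}(\theta;\xi)$ (the linear term vanishing by $\E\xi=0$), bound the remainder via Lemma \ref{Taylor_remainder}, and finish with Cauchy--Schwarz, the distributional identity $\sum_j\tau_j\xi_j\stackrel{d}{=}(\sum_j\tau_j^2)^{1/2}\xi,$ and the Gaussian moment equivalence $\E\|\xi\|^{2(1+\rho)}\lesssim(\E\|\xi\|^2)^{1+\rho}.$ The only (immaterial) difference is organizational: the paper first packages the ${\rm Lip}_{\rho,\gamma}$-bound on $(\B^kf)'$ in Proposition \ref{B^kf'prop} and then applies Lemma \ref{Taylor_remainder} to $\B^kf$ directly, whereas you unfold Corollary \ref{repr_remainder_B^k} and apply the lemma to $x\mapsto f^{(k)}(x)(\xi_1,\dots,\xi_k)$ inside the expectation, performing the same factorization in a single Cauchy--Schwarz step.
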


\begin{proof}
Note that 
\begin{align*}
&
\B^{k+1}f(\theta) 
= \B\B^{k}f(\theta)=
\E\B^kf(\theta+\xi)-\B^k f(\theta) 
\\
&
=\E(\B^k f)'(\theta)(\xi)+ \E S_{\B^k}f(\theta;\xi)
=\E S_{\B^k}f(\theta;\xi).
\end{align*}
Using the first bound of Lemma \ref{Taylor_remainder} along with bound 
\eqref{B^kf'_norm}, we get 
\begin{align*}
&
|S_{\B^k f}(\theta;\xi)| 
\lessim 
\|(\B^k f)'\|_{{\rm Lip}_{\rho,\gamma}}
(1\vee \|\theta\|\vee \|\xi\|)^{\gamma} \|\xi\|^{1+\rho} 
\\
&
\lesssim 
2^{\gamma}\|f^{(k+1)}\|_{{\rm Lip}_{\rho,\gamma}} 
(1+ k^{\gamma/2}\E^{1/2}\|\xi\|^{2\gamma}) 
(\E^{1/2}\|\xi\|^2)^k
(1\vee \|\theta\|\vee \|\xi\|)^{\gamma} \|\xi\|^{1+\rho}.
\end{align*}
Therefore,
\begin{align*}
&
|\B^{k+1}f(\theta)| \leq 
\E|S_{\B^k f}(\theta;\xi)|
\\
&
\lessim
2^{\gamma}\|f^{(k+1)}\|_{{\rm Lip}_{\rho,\gamma}} 
(1+ k^{\gamma/2}\E^{1/2}\|\xi\|^{2\gamma}) 
(\E^{1/2}\|\xi\|^2)^k
\E(1\vee \|\theta\|\vee \|\xi\|)^{\gamma} \|\xi\|^{1+\rho}
\\
&
\lessim 
2^{\gamma}\|f^{(k+1)}\|_{{\rm Lip}_{\rho,\gamma}} 
(1+ k^{\gamma/2}\E^{1/2}\|\xi\|^{2\gamma}) 
(\E^{1/2}\|\xi\|^2)^k
\E^{1/2}(1\vee \|\theta\|\vee \|\xi\|)^{2\gamma} \E^{1/2}\|\xi\|^{2(1+\rho)}.
\end{align*}
Since for a centered Gaussian random variable $\xi$ and for $\rho\in (0,1],$ 
$$
\E^{1/2}\|\xi\|^{2(1+\rho)}\lesssim (\E^{1/2}\|\xi\|^2)^{1+\rho},
$$
we get 
\begin{align*}
&
|\B^{k+1}f(\theta)| 
\\
&
\lesssim 
2^{\gamma}\|f^{(k+1)}\|_{{\rm Lip}_{\rho,\gamma}} 
(1+ k^{\gamma/2}\E^{1/2}\|\xi\|^{2\gamma})
\Bigl[(1\vee \|\theta\|)^{\gamma}+\E^{1/2}\|\xi\|^{2\gamma}\bigr]
(\E^{1/2}\|\xi\|^2)^{k+1+\rho}, 
\end{align*}
implying the claim.
\qed
\end{proof}

Theorem \ref{bias_control} immediately follows from the bound of Proposition \ref{bound_B^{k+1}}
and formula \eqref{bias_111}.


\section{Concentration}
\label{sec.conc}

In this section, we prove a concentration inequality for random variable $g(\xi),$
where $\xi $ is a Gaussian random vector in $E$ with mean zero and covariance operator $\Sigma$
and $g$ is a functional on $E$ satisfying the assumption described below. This inequality will be then used 
to prove concentration bounds for estimator $f_k(X).$ 

\begin{assumption}
\label{Lip_lip}
Suppose $g:E\mapsto {\mathbb R}$ satisfies the following Lipschitz condition:
$$
|g(x)-g(x')| \leq L(\|x\|\vee \|x'\|) \|x-x'\|, x,x'\in E,
$$
where $\delta\geq 0\mapsto L(\delta)\in {\mathbb R}_+$ is a non-decreasing continuous function such that 
\begin{align}
\label{assump_L}
L(a\delta) \lesssim_{L} L(a) e^{\delta^2/2}, \delta\geq 0, a\geq 0.
\end{align}
\end{assumption}

It is easy to see that assumption \eqref{assump_L} on function $L$ implies that for any constant $c_1>0$
there exists a constant $c_2>0$ (depending only on $L$) such that $L(c_1 \delta)\leq c_2L(\delta), \delta\geq 0.$
Clearly, \eqref{assump_L} holds for $L(\delta):=C\delta^{\alpha}, \delta\geq 0$
for arbitrary $C>0,  \alpha\geq 0.$ 
Also, if functions $L_1,\dots, L_m$ satisfy  assumption \eqref{assump_L}, then so do the functions 
$L_1+\dots+L_m,$ $L_1\vee \dots \vee L_m.$ 
In particular, this implies that any function of the form 
$$
L(\delta):= C_1 \delta^{\alpha_1}\bigvee \dots \bigvee C_m \delta^{\alpha_m}, \delta\geq 0,
$$
where $m\geq 1,$ $C_1>0,\dots, C_m>0$ and $\alpha_1\geq 0, \dots, \alpha_m\geq 0$
are given constants, satisfy assumption \eqref{assump_L}.

Note that, if $g(0)=0,$ then Assumption \ref{Lip_lip} implies that 
$$
|g(x)|\leq L(\|x\|)\|x\|, x\in E.
$$

\begin{theorem}
Suppose Assumption \ref{Lip_lip} holds. 
For all $t\geq 1$ with probability at least $1-e^{-t},$
\begin{equation}
\label{conc_conc_1}
|g(\xi)-{\mathbb E}g(\xi)|\lesssim  L({\mathbb E} \|\xi\| \vee \|\Sigma\|^{1/2}\sqrt{t})\|\Sigma\|^{1/2}\sqrt{t}. 
\end{equation}
\end{theorem}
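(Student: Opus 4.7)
The plan is to reduce this locally Lipschitz concentration bound to the classical Borell--TIS inequality for globally Lipschitz functionals of Gaussian vectors in Banach spaces, by combining a truncation of the domain with a McShane extension and then absorbing the truncation error into the main term using the growth condition \eqref{assump_L}.

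First, I would fix a radius $R := \mathbb{E}\|\xi\| + c\|\Sigma\|^{1/2}\sqrt{t}$ with a suitable absolute constant $c$. Applying Borell--TIS to the $1$-Lipschitz functional $x\mapsto \|x\|$ gives $\mathbb{P}(\|\xi\|>R)\leq e^{-t}$. On the ball $B_R := \{x\in E: \|x\|\leq R\}$, Assumption \ref{Lip_lip} guarantees that $g$ is globally $L(R)$-Lipschitz. McShane's extension theorem then produces $\tilde g:E\to \mathbb{R}$ that agrees with $g$ on $B_R$ and is $L(R)$-Lipschitz on all of $E$. A second application of Borell--TIS, now to the global Lipschitz functional $\tilde g(\xi)$ (using that the weak variance of $\xi$ equals $\|\Sigma\|$), yields
$$|\tilde g(\xi)-\mathbb{E}\tilde g(\xi)|\lesssim L(R)\|\Sigma\|^{1/2}\sqrt{t}$$
on an event of probability at least $1-e^{-t}$. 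Intersecting with $\{\|\xi\|\leq R\}$, where $\tilde g(\xi)=g(\xi)$, this becomes a bound on $|g(\xi)-\mathbb{E}\tilde g(\xi)|$ with probability at least $1-2e^{-t}$.

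The remaining task is to show that the deterministic bias $|\mathbb{E}\tilde g(\xi)-\mathbb{E}g(\xi)|$, which lives only on the event $\{\|\xi\|>R\}$, is no larger than $L(R)\|\Sigma\|^{1/2}\sqrt{t}$. Projecting any $x$ with $\|x\|>R$ radially onto $\partial B_R$ and using Assumption \ref{Lip_lip} for both $g$ and $\tilde g$ gives the pointwise estimate $|g(x)-\tilde g(x)|\leq 2L(\|x\|)(\|x\|-R)$; Cauchy--Schwarz then reduces the problem to controlling $\bigl(\mathbb{E}L(\|\xi\|)^2\bigr)^{1/2}$ and $\bigl(\mathbb{E}(\|\xi\|-R)_+^2\bigr)^{1/2}$. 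The latter factor is of order $\|\Sigma\|^{1/2}e^{-t/2}$ by integration of the Borell tail of $\|\xi\|$. The first factor is the crux of the argument: I would use \eqref{assump_L} with $a=R$ and $\delta=\|\xi\|/R$ to write $L(\|\xi\|)\lesssim_L L(R)\exp(\|\xi\|^2/(2R^2))$, and then verify that the choice of $R$ (which forces $R^2\gtrsim \|\Sigma\|t$) places the exponent $1/R^2$ strictly below the sub-Gaussian threshold $1/(4\|\Sigma\|)$ for $\|\xi\|-\mathbb{E}\|\xi\|$; a standard computation then yields $\mathbb{E}\exp(\|\xi\|^2/R^2)=O_L(1)$, hence $(\mathbb{E}L(\|\xi\|)^2)^{1/2}\lesssim_L L(R)$. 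Assembling the pieces and using monotonicity of $L$ with one more invocation of \eqref{assump_L} to write $L(R)\lesssim_L L(\mathbb{E}\|\xi\|\vee \|\Sigma\|^{1/2}\sqrt{t})$ gives the claimed bound, with the total $1-3e^{-t}$ probability rewritten as $1-e^{-t}$ after the constant adjustment noted in the introduction. The main obstacle is precisely this interplay: the $e^{\delta^2/2}$ growth in \eqref{assump_L} is calibrated exactly against the sub-Gaussian fluctuations of $\|\xi\|$, so $R$ must be chosen large enough both to kill $\mathbb{P}(\|\xi\|>R)$ and to keep $R^2/\|\Sigma\|$ above the threshold making $\mathbb{E}\exp(\|\xi\|^2/R^2)$ finite, and the proof would fail if $L$ were allowed to grow any faster.
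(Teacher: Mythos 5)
Your proposal is correct and arrives at \eqref{conc_conc_1}, but it follows a genuinely different route from the paper's. Both proofs truncate at radius $\delta(t)=\mathbb{E}\|\xi\|+C\|\Sigma\|^{1/2}\sqrt{t}$, use that $g$ is $L(\delta(t))$-Lipschitz on the ball, invoke Gaussian concentration for the resulting globally Lipschitz functional, and rely on exactly the same calibration between the $e^{\delta^2/2}$-growth in \eqref{assump_L} and the sub-Gaussian tail of $\|\xi\|-\mathbb{E}\|\xi\|$. The divergences are: (i) the paper replaces $g$ by the explicit product $h(x)=g(x)\varphi(\|x\|/\delta)$ and proves a Lipschitz lemma for $h$ by hand, whereas you appeal to McShane's extension as a black box, which is cleaner and shorter; (ii) the paper centers its concentration bound at the \emph{median} $M$ of $g(\xi)$, using that $M$ retains the ``$1/4$-quantile'' property for $h(\xi)$ (a slightly non-standard invocation of Gaussian isoperimetry), and then converts median to mean by integrating the concentration tail against $e^{-t}\,dt$, with \eqref{assump_L} ensuring convergence of $\int \sqrt{t}\,e^{t/2}e^{-t}\,dt$; you instead center at $\mathbb{E}\tilde g(\xi)$ via the standard mean form of Borell--TIS and pay the ``bias'' $|\mathbb{E}\tilde g(\xi)-\mathbb{E}g(\xi)|$ explicitly, controlling it by the pointwise estimate $|g-\tilde g|\leq 2L(\|x\|)(\|x\|-R)_+$, Cauchy--Schwarz, and exponential integrability of $L(\|\xi\|)^2$. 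Your bias bound and the paper's tail integration are two faces of the same exponential-moment computation, so neither approach is more demanding on $L$; what your route buys is avoidance of the $1/4$-median trick and of the explicit cutoff lemma, at the price of the Cauchy--Schwarz/MGF step. One small imprecision worth noting: after splitting $\|\xi\|^2\leq 2(\mathbb{E}\|\xi\|)^2+2(\|\xi\|-\mathbb{E}\|\xi\|)^2$, the effective coefficient in front of $(\|\xi\|-\mathbb{E}\|\xi\|)^2$ is $2/R^2$ rather than $1/R^2$, so the requirement is $R^2\gtrsim\|\Sigma\|$ with a constant making $2/R^2$ strictly below $1/(2\|\Sigma\|)$; your choice of $c$ achieves this, but the stated threshold should be read as $2/R^2\le 1/(4\|\Sigma\|)$ rather than $1/R^2\le 1/(4\|\Sigma\|)$.
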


\begin{proof}
Without loss of generality, assume that $g(0)=0.$ For $\delta>0,$ define 
\begin{align*}
h(x)&\coloneqq  g(x)\varphi \biggl(\frac{\|x\|}{\delta}\biggr), x\in E,
\end{align*}
where  
\begin{align*}
\varphi(u)&=
\begin{cases}
2-x,& u\in (1,2),\\
1,& u\leq 1,\\
0,& u\geq 2.
\end{cases}
\end{align*}
 Clearly, $\varphi$ is a Lipschitz function with constant $1.$
 We will now prove a Lipschitz condition for the function $h:E\mapsto {\mathbb R}.$ 

\begin{lemma}
Under Assumption \ref{Lip_lip},
for all $x, x^{\prime}\in \EB$
\begin{align*}
|h(x)-h(x^{\prime})|\leq
3 L(3\delta) \|x-x'\|.
\end{align*}
\end{lemma}

\begin{proof}
Note that 
\begin{align}
\label{eq:h_Lip}
|h(x)-h(x^{\prime})|&=|h(x)-h(x^{\prime})| \Ind\{\|x\|\leq 2\delta,\|x^{\prime}\|\leq 2\delta\}
\\&\quad +|h(x)-h(x^{\prime})| \Ind\{\|x\|\leq 2\delta, \|x^{\prime}\| >2 \delta\}
\nonumber
\\&\quad +|h(x)-h(x^{\prime})| \Ind\{\|x^{\prime}\|\leq 2\delta, \|x\|>2 \delta\}.
\nonumber
\end{align}
For the first summand in the right hand side of \eqref{eq:h_Lip}, we have the following bound for $\|x\|,\|x^{\prime}\|\leq 2\delta$
\begin{align}
\nonumber
&|h(x)-h(x^{\prime})| \Ind\{\|x\|,\|x^{\prime}\|\leq 2\delta\}
\\
\nonumber
&\leq
|g(x)-g(x^{\prime})|\varphi \biggl(\frac{\|x\|}{\delta}\biggr)+
|g(x^{\prime})|\left|\varphi \biggl(\frac{\|x\|}{\delta}\biggr)-\varphi \biggl(\frac{\|x^{\prime}\|}{\delta}\biggr)\right|
\\
\nonumber
&\leq L(2\delta)\|x-x^{\prime}\|+L(2\delta)\|x^{\prime}\|\|x-x^{\prime}\|/\delta
\\&\leq 3  L(2\delta)\|\|x-x^{\prime}\|.
\label{ineq:h_1termbound}
\end{align}
To bound the second summand in \eqref{eq:h_Lip}, observe that for $\|x\|\leq 2\delta, \|x^{\prime}\|> 2\delta$
\begin{align}
\nonumber
&|h(x)-h(x^{\prime})|\Ind\{\|x\|\leq 2\delta, \|x^{\prime}\| >2 \delta\}
\\
\label{eq:h_2term}
&=
|h(x)-h(x^{\prime})| \Ind\{\|x\|\leq 2\delta, \|x^{\prime}\| >2 \delta, \|x-x^{\prime}\| \geq\delta \} 
\\&\quad+ |h(x)-h(x^{\prime})| \Ind\{\|x\|\leq 2\delta, \|x^{\prime}\| >2 \delta,  \|x-x^{\prime}\| <\delta\},
\nonumber
\end{align}
and bound the first term in the right hand side of \eqref{eq:h_2term} as follows:
\begin{align*}
&
|h(x)-h(x^{\prime})|
\Ind\{\|x\|\leq 2\delta, \|x^{\prime}\| >2 \delta, \|x-x^{\prime}\| \geq \delta \}
\\&=
|g(x)|\varphi \biggl(\frac{\|x\|}{\delta}\biggr) \Ind\{\|x\|\leq 2\delta, \|x^{\prime}\| >2 \delta, \|x-x^{\prime}\| \geq\delta \}
\\&\leq 
L(2\delta)\|x\| \Ind\{\|x\|\leq 2\delta, \|x^{\prime}\| >2 \delta, \|x-x^{\prime}\| \geq\delta \}.
\\&\leq  2L(2\delta)\|x-x^{\prime}\|.
\end{align*}
For the second term in \eqref{eq:h_2term}, we have
\begin{align*}
&
|h(x)-h(x^{\prime})|
\Ind\{\|x\|\leq 2\delta, \|x^{\prime}\| >2 \delta,  \|x-x^{\prime}\| <\delta\}
\\&= 
|h(x)-h(x^{\prime})| \Ind\{\|x\|\leq 2\delta, 2\delta<\|x^{\prime}\|\leq 3 \delta,  \|x-x^{\prime}\| <\delta\}
\\&\leq|h(x)-h(x^{\prime})| \Ind\{\|x\|\leq 3\delta,\|x^{\prime}\|\leq 3\delta\}
\\&\leq 3  L(3\delta)\|\|x-x^{\prime}\|,
\end{align*}
where the last inequality is proved similarly to bound \eqref{ineq:h_1termbound} 
(with an obvious change of $2\delta$ to $3\delta$). Substituting the above bounds in \eqref{eq:h_Lip}, leads to the resulting inequality.
\QED
\end{proof}

In what follows, we set 
$$
\delta = \delta (t):={\mathbb E}\|\xi\| + C\|\Sigma\|^{1/2}\sqrt{t}
$$
for $t\geq 1$ with a constant $C>0$ such that 
$$
{\mathbb P}\{\|\xi\|\geq \delta(t)\}\leq e^{-t}, t\geq 1
$$
(which holds by the Gaussian concentration inequality, see, e.g.,  \cite{Ledoux}).

Let $M:={\rm Med}(g(\xi)).$ Assuming that $t\geq \log (4),$ we get
\begin{align*}
&
{\mathbb P}\{h(\xi)\geq M\}\geq {\mathbb P}\{h(\xi)\geq M, \|\xi\|\leq \delta(t)\}
\geq {\mathbb P}\{g(\xi)\geq M, \|\xi\|\leq \delta(t)\}
\\
&
\geq {\mathbb P}\{g(\xi)\geq M\}-
{\mathbb P}\{\|\xi\|\geq \delta(t)\}\geq \frac{1}{2}-e^{-t}\geq \frac{1}{4},
\end{align*}
where we used the fact that, on the event $\{\|\xi\|\leq \delta\},$
$h(\xi)=g(\xi).$ Similarly, we have 
$
{\mathbb P}\{h(\xi)\leq M\}\geq \frac{1}{4}.
$
We can now use again Gaussian concentration inequality (in a little bit non-standard fashion, see \cite{Koltchinskii_Lounici_arxiv}, Section 3 for a similar argument) to prove that with probability 
at least $1-e^{-t}$
$$
|h(\xi)-M| \lesssim L(3\delta(t)) \|\Sigma\|^{1/2}\sqrt{t}
$$
and, since $h(\xi)$ and $g(\xi)$ coincide on the event of probability at least 
$1-e^{-t},$ we also have that 
$$
|g(\xi)-M| \lesssim L(3\delta(t)) \|\Sigma\|^{1/2}\sqrt{t}
$$
with probability at least $1-2e^{-t}.$ Moreover, by adjusting 
the value of the constant in the above inequality, the probability 
bound can be written in its standard form $1-e^{-t}$ and the inequality 
holds for all $t\geq 1.$ Using the properties of function $L$ (namely, its 
monotonicity and condition \eqref{assump_L})
and the definition of $\delta(t),$
we can also rewrite the above bound as 
$$
|g(\xi)-M| \leq C_L (L({\mathbb E}\|\xi\|) \|\Sigma\|^{1/2}\sqrt{t}
\bigvee L(\|\Sigma\|^{1/2}\sqrt{t}) \|\Sigma\|^{1/2}\sqrt{t})=: s(t)
$$
for some constant $C_L>0.$
Note that this bound actually holds for all $t\geq 0$ with probability 
at least $1-e^{1-t}.$ Note also that the function $t\mapsto s(t)$
is strictly increasing on $[0,+\infty)$ with $s(0)=0$ and $s(+\infty)=+\infty.$
Moreover, it easily follows from condition \eqref{assump_L} that 
$s(t)=o(e^{t})$ as $t\to \infty.$
It remains to integrate out the tails of the probability bound: 
\begin{align*}
&
|{\mathbb E} g(\xi)-M|\leq {\mathbb E}|g(\xi)-M|
\\
&
= \int_0^{\infty}{\mathbb P}\{|g(\xi)-M|\geq s\}ds
=\int_0^{\infty}{\mathbb P}\{|g(\xi)-M|\geq s(t)\}ds(t)
\\
&
\leq e\int_{0}^{\infty} e^{-t}ds(t)= 
e\int_{0}^{\infty}s(t)e^{-t}dt.
\end{align*}
By condition \eqref{assump_L}, 
$$
s(t) \leq C_L L({\mathbb E}\|\xi\|) \|\Sigma\|^{1/2}\sqrt{t}+ C_L'  L(\|\Sigma\|^{1/2}) 
\|\Sigma\|^{1/2}\sqrt{t} e^{t/2}, t\geq 0.
$$
Therefore,
\begin{align*}
&
|{\mathbb E} g(\xi)-M|\leq \int_{0}^{\infty}s(t)e^{-t}dt
\\
&
\lesssim_L 
L({\mathbb E}\|\xi\|) \|\Sigma\|^{1/2}\int_{0}^{\infty}\sqrt{t}e^{-t}dt
+L(\|\Sigma\|^{1/2}) \|\Sigma\|^{1/2}\int_{0}^{\infty}\sqrt{t}e^{-t/2}dt
\\
&
\lesssim_L 
L({\mathbb E}\|\xi\|) \|\Sigma\|^{1/2}\bigvee L(\|\Sigma\|^{1/2}) \|\Sigma\|^{1/2},
\end{align*}
which now allows us to replace the median $M$ by the mean ${\mathbb E} g(\xi)$
in the concentration bound, completing the proof.
\qed
\end{proof}

The following corollary is immediate (for the proof, check that Assumption \ref{Lip_lip}
holds with $L(\delta)=C\|g\|_{{\rm Lip}_{1,\gamma}}(1\vee \|\theta\|\vee \delta)^{\gamma}$
for some $C>0$). 

\begin{corollary}
\label{conc_g}
Suppose $g\in {\rm Lip}_{1,\gamma}(E)$ for some $\gamma\geq 0.$ Then, for all $\theta\in E$
and for all $t\geq 1$ with probability at least $1-e^{-t}$
$$
|g(\theta+\xi)-{\mathbb E}g(\theta+\xi)|\lesssim \|g\|_{{\rm Lip}_{1,\gamma}}
(1\vee \|\theta\| \vee {\mathbb E}\|\xi\|\vee \|\Sigma\|^{1/2}\sqrt{t})^{\gamma}\|\Sigma\|^{1/2}
\sqrt{t}.
$$
\end{corollary}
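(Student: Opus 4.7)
The plan is to reduce Corollary \ref{conc_g} to a direct application of the concentration theorem proved just above it, by identifying an appropriate Lipschitz majorant $L$ that satisfies Assumption \ref{Lip_lip}. The key observation is that the concentration theorem is stated for $g(\xi)$ with $\xi$ centered, so the translation by $\theta$ has to be absorbed into the modulus $L$.

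First I would introduce the centered function $\tilde g(x) := g(\theta + x)$, so that $g(\theta+\xi) = \tilde g(\xi)$. Using the hypothesis $g \in {\rm Lip}_{1,\gamma}(E)$ together with the elementary bound
$$
1 \vee \|\theta + x\| \vee \|\theta + x'\| \leq 2\bigl(1 \vee \|\theta\| \vee \|x\| \vee \|x'\|\bigr),
$$
I obtain
$$
|\tilde g(x) - \tilde g(x')| \leq 2^{\gamma}\|g\|_{{\rm Lip}_{1,\gamma}}\bigl(1\vee\|\theta\|\vee\|x\|\vee\|x'\|\bigr)^{\gamma}\|x-x'\|.
$$
Thus $\tilde g$ fulfills the Lipschitz hypothesis of Assumption \ref{Lip_lip} with the non-decreasing continuous function $L(\delta) = 2^{\gamma}\|g\|_{{\rm Lip}_{1,\gamma}}(1 \vee \|\theta\| \vee \delta)^{\gamma}$, where $\theta$ is frozen and simply tunes the baseline level of $L$.

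Next I would verify condition \eqref{assump_L} for this choice of $L$. Since $L$ is monotone, for $\delta \geq 0$ and $a \geq 0$,
$$
L(a\delta) \leq L\bigl(a(1\vee\delta)\bigr) = 2^{\gamma}\|g\|_{{\rm Lip}_{1,\gamma}}\bigl(1\vee\|\theta\|\vee a(1\vee\delta)\bigr)^{\gamma} \leq (1\vee\delta)^{\gamma} L(a),
$$
and since $(1\vee\delta)^{\gamma} \leq c_{\gamma}\, e^{\delta^{2}/2}$ for a constant $c_{\gamma}$, the growth condition \eqref{assump_L} holds.

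With Assumption \ref{Lip_lip} verified, I would apply inequality \eqref{conc_conc_1} to $\tilde g$: with probability at least $1-e^{-t}$,
$$
|\tilde g(\xi) - {\mathbb E}\tilde g(\xi)| \lesssim L\bigl({\mathbb E}\|\xi\| \vee \|\Sigma\|^{1/2}\sqrt{t}\bigr)\|\Sigma\|^{1/2}\sqrt{t}.
$$
Substituting the explicit $L$ and using that $1 \vee \|\theta\| \vee (u \vee v) = 1 \vee \|\theta\| \vee u \vee v$ yields precisely the announced bound. There is no real obstacle here; the argument is essentially a bookkeeping reduction, with the only point of care being the absorption of $\|\theta\|$ into the baseline of $L$ so that the growth condition is still satisfied uniformly in $\theta$.
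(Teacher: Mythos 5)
Your proposal is correct and follows exactly the route the paper indicates: the paper's proof of Corollary \ref{conc_g} is precisely the remark that Assumption \ref{Lip_lip} holds for $x\mapsto g(\theta+x)$ with $L(\delta)=C\|g\|_{{\rm Lip}_{1,\gamma}}(1\vee\|\theta\|\vee\delta)^{\gamma}$, after which one applies bound \eqref{conc_conc_1}. You have merely filled in the (correct) verification of the Lipschitz majorant and of condition \eqref{assump_L}, with the constant depending only on $\gamma$ and hence uniform in $\theta$.
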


Another immediate corollary of this theorem is the following concentration bound for the remainder $S_g(\theta;\xi)$ of the first order Taylor expansion of 
$g(\theta+\xi).$ For the proof, it is enough to observe that, by Lemma \ref{Taylor_remainder},
the function $x\mapsto S_g(\theta;x)$ satisfies Assumption \ref{Lip_lip} with 
$L(\delta)= C\|g'\|_{{\rm Lip}_{\rho,\gamma}} (1\vee \|\theta\|\vee \delta)^{\gamma} \delta^{\rho}$
for some constant $C>0.$

\begin{corollary}
\label{conc_S_g}
Suppose, for some $\rho\in (0,1]$ and $\gamma\geq 0,$ $\|g'\|_{{\rm Lip}_{\rho,\gamma}}<\infty.$
Then, for all $\theta\in E$ and for all $t\geq 1$ with probability at least $1-e^{-t},$
$$
|S_g(\theta;\xi)-{\mathbb E}S_g(\theta;\xi)|\lesssim \|g'\|_{{\rm Lip}_{\rho,\gamma}} (1\vee \|\theta\|\vee {\mathbb E}\|\xi\|\vee \|\Sigma\|^{1/2}\sqrt{t})^{\gamma}({\mathbb E}\|\xi\| \vee \|\Sigma\|^{1/2}\sqrt{t})^{\rho}\|\Sigma\|^{1/2}\sqrt{t}.
$$ 
\end{corollary}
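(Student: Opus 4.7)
The plan is to reduce this corollary to a direct application of the concentration theorem proved immediately above it. Fix $\theta \in E$ and treat $x \mapsto S_g(\theta;x)$ as the functional to which I will apply that theorem. The only work is to verify Assumption \ref{Lip_lip} with a suitable choice of the function $L$, and then to specialize the general concentration inequality \eqref{conc_conc_1} to this $L$.

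First, I would quote the second inequality of Lemma \ref{Taylor_remainder}, which states that
\[
|S_g(\theta;x') - S_g(\theta;x)| \lesssim \|g'\|_{{\rm Lip}_{\rho,\gamma}} (1\vee \|\theta\|\vee \|x\|\vee \|x'\|)^{\gamma}(\|x\|\vee \|x'\|)^{\rho}\|x'-x\|.
\]
With $\theta$ fixed, this is exactly the Lipschitz condition of Assumption \ref{Lip_lip} for the functional $x\mapsto S_g(\theta;x)$, provided one sets
\[
L(\delta) := C\,\|g'\|_{{\rm Lip}_{\rho,\gamma}}\,(1\vee \|\theta\|\vee \delta)^{\gamma}\delta^{\rho}, \qquad \delta \geq 0,
\]
for an absolute constant $C>0$. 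Next I would check the growth condition \eqref{assump_L}: the factor $(1\vee \|\theta\|\vee a\delta)^{\gamma}$ splits (up to constants) as $(1\vee \|\theta\|\vee a)^{\gamma}(1\vee \delta)^{\gamma}$, and the remaining $(1\vee \delta)^{\gamma}\delta^{\rho}$ is dominated (up to constants depending on $\gamma,\rho$) by $e^{\delta^{2}/2}$. So $L$ is admissible.

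Applying the concentration bound \eqref{conc_conc_1} to $g(\xi):=S_g(\theta;\xi)$ with this choice of $L$, for every $t\geq 1$ with probability at least $1-e^{-t}$,
\[
|S_g(\theta;\xi)-\E S_g(\theta;\xi)| \lesssim L\bigl(\E\|\xi\|\vee \|\Sigma\|^{1/2}\sqrt{t}\bigr)\|\Sigma\|^{1/2}\sqrt{t}.
\]
Substituting $\delta = \E\|\xi\|\vee \|\Sigma\|^{1/2}\sqrt{t}$ in the formula for $L$ and using the monotonicity of the two factors of $L$ produces precisely the right-hand side of the corollary.

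There is no genuine obstacle here: the only mild point to be careful about is confirming that the $(1\vee \|\theta\|\vee \delta)^{\gamma}\delta^{\rho}$ prefactor does obey \eqref{assump_L} uniformly in $\theta$, so that the implicit constants in the concentration theorem do not absorb a hidden dependence on $\|\theta\|$. Splitting the cases $\delta\le 1$ and $\delta\ge 1$ handles this cleanly, and then the concentration theorem applies verbatim.
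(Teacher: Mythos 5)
Your proposal is correct and is exactly the paper's proof: the paper likewise notes that, by the second bound of Lemma \ref{Taylor_remainder}, the map $x\mapsto S_g(\theta;x)$ satisfies Assumption \ref{Lip_lip} with $L(\delta)=C\|g'\|_{{\rm Lip}_{\rho,\gamma}}(1\vee\|\theta\|\vee\delta)^{\gamma}\delta^{\rho}$ and then invokes bound \eqref{conc_conc_1}. Your extra care in checking that \eqref{assump_L} holds with constants depending only on $\gamma,\rho$ (not on $\|\theta\|$) is a point the paper leaves implicit, and your verification of it is sound.
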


We now apply corollaries \ref{conc_g} and \ref{conc_S_g} to obtain concentration bounds 
for estimator $f_k(X)$ and the remainder of its first order Taylor expansion.

\begin{proposition}
Let $\gamma\geq 0$ and suppose that $f\in C^{k+1,\gamma}(E)$ and that ${\mathbb E}^{1/2}\|\xi\|^2\leq 1/2.$
Then, for all $t\geq 1,$ with probability 
at least $1-e^{-t}$
\begin{align}
\label{conc_f_k}
|f_k(\theta+\xi)-{\mathbb E}f_k(\theta+\xi)|\lesssim_{\gamma} (k+1)^{\gamma/2}\|f\|_{C^{k+1,\gamma}}
(1\vee \|\theta\| \vee \|\Sigma\|^{1/2}\sqrt{t})^{\gamma}\|\Sigma\|^{1/2}
\sqrt{t}.
\end{align}
\end{proposition}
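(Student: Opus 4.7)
The plan is to reduce this to a direct application of Corollary \ref{conc_g} with $g = f_k$, which requires controlling $\|f_k\|_{{\rm Lip}_{1,\gamma}}$. Since $f_k = \sum_{j=0}^{k}(-1)^{j}\B^{j}f$, I will bound $\|\B^j f\|_{{\rm Lip}_{1,\gamma}}$ term by term and sum.

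First I observe that for any Fr\'echet differentiable $g:E\to\mathbb R$, the mean value theorem together with the definition of the $L_{\infty,\gamma}$-norm of $g'$ yields
$$
\|g\|_{{\rm Lip}_{1,\gamma}} \leq \|g'\|_{L_{\infty,\gamma}},
$$
since $\|g'(\theta'' + t(\theta'-\theta''))\| \leq \|g'\|_{L_{\infty,\gamma}}(1\vee\|\theta'\|\vee\|\theta''\|)^\gamma$ for all $t\in[0,1]$. Applying this to $\B^{j}f$ and invoking bound \eqref{B^jf'L} from Proposition \ref{B^kf'prop}, I obtain for each $j = 0,\dots,k$
$$
\|\B^{j}f\|_{{\rm Lip}_{1,\gamma}} \leq 2^{\gamma}\|f^{(j+1)}\|_{L_{\infty,\gamma}}\bigl(1+j^{\gamma/2}\E^{1/2}\|\xi\|^{2\gamma}\bigr)\bigl(\E^{1/2}\|\xi\|^{2}\bigr)^{j},
$$
with the $j=0$ case handled separately by $\|f\|_{{\rm Lip}_{1,\gamma}} \leq \|f'\|_{L_{\infty,\gamma}} \leq \|f\|_{C^{k+1,\gamma}}$.

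Next I sum over $j$. Using the assumption $\E^{1/2}\|\xi\|^{2} \leq 1/2$, the geometric factor $(\E^{1/2}\|\xi\|^{2})^{j} \leq 2^{-j}$ makes the series convergent. By Gaussian moment equivalence (Fernique/Borell-type inequalities for Gaussian vectors in Banach spaces), $\E^{1/2}\|\xi\|^{2\gamma} \lesssim_{\gamma}(\E^{1/2}\|\xi\|^{2})^{\gamma} \leq 2^{-\gamma}$, and in particular $\E^{1/2}\|\xi\|^{2\gamma}\lesssim_\gamma 1$. Bounding each $\|f^{(j+1)}\|_{L_{\infty,\gamma}}\leq\|f\|_{C^{k+1,\gamma}}$ and summing the geometric series weighted by the polynomial factor $(1+j^{\gamma/2})$ then gives
$$
\|f_k\|_{{\rm Lip}_{1,\gamma}} \leq \sum_{j=0}^{k}\|\B^{j}f\|_{{\rm Lip}_{1,\gamma}} \lesssim_{\gamma} (k+1)^{\gamma/2}\,\|f\|_{C^{k+1,\gamma}},
$$
where the $(k+1)^{\gamma/2}$ factor is inherited in the same loose manner as in Corollary \ref{cor_f_k_infty_gamma}.

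Finally I apply Corollary \ref{conc_g} to $g = f_k$: for all $t\geq 1$, with probability at least $1 - e^{-t}$,
$$
|f_k(\theta+\xi) - \E f_k(\theta+\xi)| \lesssim \|f_k\|_{{\rm Lip}_{1,\gamma}}\bigl(1\vee\|\theta\|\vee\E\|\xi\|\vee\|\Sigma\|^{1/2}\sqrt{t}\bigr)^{\gamma}\|\Sigma\|^{1/2}\sqrt{t}.
$$
Since $\E\|\xi\| \leq \E^{1/2}\|\xi\|^{2} \leq 1/2 \leq 1$, the term $\E\|\xi\|$ is absorbed into the factor $1\vee\|\theta\|\vee\|\Sigma\|^{1/2}\sqrt{t}$, and substituting the Lipschitz estimate for $f_k$ yields \eqref{conc_f_k}. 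The only genuinely nontrivial ingredient is the Lipschitz-norm bound on $\B^j f$, which is essentially immediate from \eqref{B^jf'L}; the rest is bookkeeping on the geometric series and moment equivalence for Gaussian $\|\xi\|$.
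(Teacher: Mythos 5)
Your proof is correct and follows essentially the same route as the paper's: bound $\|(\B^j f)'\|_{L_{\infty,\gamma}}$ via \eqref{B^jf'L}, sum the geometric series under $\E^{1/2}\|\xi\|^2\leq 1/2$, and invoke Corollary \ref{conc_g}. The only difference is that you make explicit the mean value theorem step $\|g\|_{{\rm Lip}_{1,\gamma}}\leq\|g'\|_{L_{\infty,\gamma}}$ needed to pass from the derivative bound to the hypothesis of Corollary \ref{conc_g}, which the paper leaves implicit.
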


\begin{proof}
Using bound \eqref{B^jf'L}, we get 
\begin{align*}
&
\|f_k'\|_{L_{\infty},\gamma}\leq 
\sum_{j=0}^k \|({\mathcal B}^j f)'\|_{L_{\infty},\gamma}
\\
&
\leq 2^{\gamma}\sum_{j=0}^k \|f^{(j+1)}\|_{L_{\infty,\gamma}} 
(1+ j^{\gamma/2}{\mathbb E}^{1/2}\|\xi\|^{2\gamma})
({\mathbb E}^{1/2}\|\xi\|^2)^j
\\
&
\leq 2^{\gamma} \|f\|_{C^{k+1,\gamma}}(1+ k^{\gamma/2}{\mathbb E}^{1/2}\|\xi\|^{2\gamma})
\sum_{j=0}^k 
({\mathbb E}^{1/2}\|\xi\|^2)^j
\\
&
\leq 2^{\gamma+1} \|f\|_{C^{k+1,\gamma}}(1+ k^{\gamma/2}{\mathbb E}^{1/2}\|\xi\|^{2\gamma})
\lesssim_{\gamma} (k+1)^{\gamma/2}\|f\|_{C^{k+1,\gamma}}.
\end{align*}
The result now follows from Corollary \ref{conc_g}. 
\qed
\end{proof}

With a little additional work, we get the following modification of concentration 
bound \eqref{conc_f_k}.

\begin{corollary}
Let $\gamma\geq 0$ and suppose that $f\in C^{k+1,\gamma}(E)$ and that ${\mathbb E}^{1/2}\|\xi\|^2\leq 1/2.$
If $\gamma\leq 1,$ then, for all $t\geq 1,$ with probability 
at least $1-e^{-t}$
\begin{align}
\label{conc_f_k_A}
|f_k(\theta+\xi)-{\mathbb E}f_k(\theta+\xi)|\lesssim_{\gamma} 
(k+1)^{\gamma/2}\|f\|_{C^{k+1,\gamma}}
(1\vee \|\theta\|)^{\gamma}\|\Sigma\|^{1/2}\sqrt{t}.
\end{align}
If $\gamma>1,$ then, for all $t\geq 1,$ with the same probability
\begin{align}
\label{conc_f_k_B}
|f_k(\theta+\xi)-{\mathbb E}f_k(\theta+\xi)|\lesssim_{\gamma} 
(k+1)^{\gamma/2}\|f\|_{C^{k+1,\gamma}}
(1\vee \|\theta\|)^{\gamma}\Bigl(\|\Sigma\|^{1/2}\sqrt{t}\vee (\|\Sigma\|^{1/2}\sqrt{t})^{\gamma}\Bigr).
\end{align}
\end{corollary}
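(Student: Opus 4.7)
The plan is to derive both bounds from the preceding proposition by refining the weight $(1\vee\|\theta\|\vee\|\Sigma\|^{1/2}\sqrt{t})^\gamma$ appearing in \eqref{conc_f_k}. Writing $a:=1\vee\|\theta\|\ge 1$ and $u:=\|\Sigma\|^{1/2}\sqrt{t}$, the elementary estimate $(a\vee u)^\gamma\le C_\gamma(a^\gamma+u^\gamma)$---valid with $C_\gamma=1$ for $\gamma\in[0,1]$ by subadditivity of $x\mapsto x^\gamma$, and with $C_\gamma=2^{\gamma-1}$ for $\gamma>1$ by convexity---reduces the right-hand side of \eqref{conc_f_k} to an estimate of the form
\[
(k+1)^{\gamma/2}\|f\|_{C^{k+1,\gamma}}\bigl(a^\gamma u + u^{\gamma+1}\bigr),
\]
holding with probability at least $1-e^{-t}$. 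The first summand matches \eqref{conc_f_k_A} and the "lower branch" of \eqref{conc_f_k_B}, so what remains is to absorb the tail contribution $u^{\gamma+1}$ into the claimed target.

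For $\gamma\le 1$, if $u\le a$ then $u^\gamma\le a^\gamma$ and hence $u^{\gamma+1}\le a^\gamma u$, collapsing the tail into the leading term. In the complementary regime $u>a\ge 1$, I would invoke the hypothesis $\|\Sigma\|^{1/2}\le\mathbb{E}^{1/2}\|\xi\|^2\le 1/2$: writing $u^{\gamma+1}=\|\Sigma\|^{\gamma/2}t^{\gamma/2}\cdot\|\Sigma\|^{1/2}\sqrt{t}$, the prefactor $\|\Sigma\|^{\gamma/2}\le 2^{-\gamma}$ is absorbed into the $\gamma$-dependent constant, and the residual factor is reabsorbed by a mild adjustment of the exponential probability level in the spirit of the "adjusting the constants" convention stated in the introduction. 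This produces \eqref{conc_f_k_A}.

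For $\gamma>1$, a direct case analysis on $u\gtrless 1$ yields $u^{\gamma+1}\le a^\gamma(u\vee u^\gamma)$. Namely, when $u\le 1$ one has $u^{\gamma+1}\le u\le a^\gamma u\le a^\gamma(u\vee u^\gamma)$ using $a\ge 1$; when $u>1$ one has $u\vee u^\gamma=u^\gamma$ and the desired inequality reduces to $u\le a^\gamma$, handled in the critical range again by the hypothesis $\|\Sigma\|^{1/2}\le 1/2$. Combined with the trivial $a^\gamma u\le a^\gamma(u\vee u^\gamma)$, this yields \eqref{conc_f_k_B}. The main technical obstacle throughout is the regime $u>a$, where the naive subadditive decomposition leaves a ratio $u^\gamma/a^\gamma$ that is not uniformly controlled and must be suppressed via the smallness hypothesis on $\mathbb{E}^{1/2}\|\xi\|^2$.
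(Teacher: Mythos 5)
Your derivation is correct in the regime $u\le a$ (where $u:=\|\Sigma\|^{1/2}\sqrt{t}$ and $a:=1\vee\|\theta\|$), but the regime $u>a$ contains a genuine gap that the smallness hypothesis $\|\Sigma\|^{1/2}\le\mathbb{E}^{1/2}\|\xi\|^2\le 1/2$ cannot repair. In that regime the bound \eqref{conc_f_k} from the preceding proposition gives $(a\vee u)^\gamma u=u^{\gamma+1}$, and the ratio of this to the target $a^\gamma u$ (for $\gamma\le 1$) is $(u/a)^\gamma$, which is unbounded in $t$: for instance with $\|\theta\|\le 1$ and $\|\Sigma\|=1/4$ one has $u=\sqrt{t}/2$ and the ratio is $(\sqrt{t}/2)^\gamma\to\infty$. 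Your proposed remedy writes $u^{\gamma+1}=\|\Sigma\|^{\gamma/2}t^{\gamma/2}\cdot u$ and treats $t^{\gamma/2}$ as something that ``a mild adjustment of the exponential probability level'' can absorb, but the paper's ``adjusting the constants'' convention only permits an additive/multiplicative constant change in $t$ (to turn $1-Ce^{-t}$ into $1-e^{-t}$); it does not allow absorbing a polynomially growing factor of $t$. Similarly, for $\gamma>1$ the step that reduces to $u\le a^\gamma$ is simply false when $a=1$ and $u>1$. The conclusion is that \eqref{conc_f_k} alone is too weak to yield the corollary in the tail regime.

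The missing ingredient is a second, complementary estimate used by the paper: directly from Corollary~\ref{cor_f_k_infty_gamma}, $\|f_k\|_{L_{\infty,\gamma}}\lesssim_\gamma(k+1)^{\gamma/2}\|f\|_{C^{k,\gamma}}$, one gets (via Gaussian concentration of $\|\xi\|$ and $\mathbb{E}^{1/2}\|\xi\|^2\le 1/2$) the crude ``no small factor'' bound
\begin{equation*}
|f_k(\theta+\xi)-\mathbb{E}f_k(\theta+\xi)|\lesssim_\gamma (k+1)^{\gamma/2}\|f\|_{C^{k,\gamma}}\,(1\vee\|\theta\|\vee u)^\gamma
\end{equation*}
with probability at least $1-e^{-t}$. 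When $u>1$ this is in fact \emph{smaller} than the concentration bound $(a\vee u)^\gamma u$, and since $(a\vee u)^\gamma\le a^\gamma u^\gamma$ for $a,u\ge 1$, it yields $a^\gamma u^\gamma\le a^\gamma u$ when $\gamma\le 1$ (giving \eqref{conc_f_k_A}) and $a^\gamma u^\gamma=a^\gamma(u\vee u^\gamma)$ when $\gamma>1$ (giving \eqref{conc_f_k_B}). Combining that with the straightforward case $u\le 1$ (where \eqref{conc_f_k} already gives $a^\gamma u$) completes the proof. Your proposal never introduces this auxiliary bound, which is exactly why it stalls in the tail.
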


\begin{proof}
It follows from Corollary \ref{cor_f_k_infty_gamma} that 
$
\|f_k\|_{L_{\infty,\gamma}} 
\lesssim_{\gamma} (k+1)^{\gamma/2} \|f\|_{C^{k, \gamma}}.
$
This implies that 
$$
|f_k(\theta+\xi)|\lesssim_{\gamma} 
(k+1)^{\gamma/2} \|f\|_{C^{k, \gamma}}(1\vee \|\theta\|\vee \|\xi\|)^{\gamma},
$$
which easily yields the following bounds
$$
|{\mathbb E}f_k(\theta+\xi)|\lesssim_{\gamma} 
(k+1)^{\gamma/2} \|f\|_{C^{k, \gamma}}(1\vee \|\theta\|)^{\gamma}
$$
and 
$$
|f_k(\theta+\xi)|\lesssim_{\gamma} 
(k+1)^{\gamma/2} \|f\|_{C^{k, \gamma}}(1\vee \|\theta\| \vee \|\Sigma\|^{1/2}\sqrt{t})^{\gamma}
$$
(the last bound holds for all $t\geq 1$ with probability at least $1-e^{-t}$).
Therefore, for all $t\geq 1$ with probability at least $1-e^{-t}$
\begin{equation}
\label{conc_f_k_C}
|f_k(\theta+\xi)-{\mathbb E}f_k(\theta+\xi)|\lesssim_{\gamma} 
(k+1)^{\gamma/2} \|f\|_{C^{k, \gamma}}(1\vee \|\theta\| \vee \|\Sigma\|^{1/2}\sqrt{t})^{\gamma}.
\end{equation}
If $\|\Sigma\|^{1/2}\sqrt{t}\leq 1,$ then bound \eqref{conc_f_k_A} follows from bound \eqref{conc_f_k} (regardless of what the value of $\gamma\geq 0$ is). If $\|\Sigma\|^{1/2}\sqrt{t}>1,$ we use bound \eqref{conc_f_k_C} to get 
\begin{align*}
&
|f_k(\theta+\xi)-{\mathbb E}f_k(\theta+\xi)|
\\
&
\lesssim_{\gamma} 
(k+1)^{\gamma/2} \|f\|_{C^{k, \gamma}}(1\vee \|\theta\|)^{\gamma}\bigvee (k+1)^{\gamma/2} \|f\|_{C^{k, \gamma}}(\|\Sigma\|^{1/2}\sqrt{t})^{\gamma}
\\
&
\lesssim_{\gamma}
(k+1)^{\gamma/2} \|f\|_{C^{k, \gamma}}(1\vee \|\theta\|)^{\gamma}(\|\Sigma\|^{1/2}\sqrt{t})^{\gamma},
\end{align*}
which yields \eqref{conc_f_k_B}.
\qed
\end{proof}

Given an increasing, convex function $\psi : {\mathbb R}_+\mapsto {\mathbb R}_+$ with $\psi(0)=0$
and $\psi(t)\to +\infty$ as $t\to+\infty$ (in what follows, an Orlicz function), 
the Orlicz $\psi$-norm of a r.v. $\eta$ is defined as 
$$
\|\eta\|_{\psi}:=\inf \biggl\{C>0: {\mathbb E}\psi\biggl(\frac{|\eta|}{C}\biggr)\leq 1\biggr\}.
$$ 
For $p\geq 1$ and $\psi(t):=t^p, t\geq 0,$ this yields the usual $L_p$-norms.
Another popular choice is $\psi_{\alpha}(t):=e^{t^{\alpha}}-1, t\geq 0$ for some 
$\alpha\geq 1,$ in particular, $\psi_2$-norm for subgaussian random variables 
and $\psi_1$-norm for subexponential random variables. 

We will need the following simple lemma.

\begin{lemma}
\label{Orlicz}
Let $Y$ be a non-negative random variable. Suppose, for some $A_1>0,\dots, A_m>0, \beta_1>0, \dots, \beta_m>0$
and for all $t\geq 1,$
$$
{\mathbb P}\{Y\geq A_1 t^{\beta_1}\vee \dots \vee A_m t^{\beta_m}\}\leq e^{-t}.
$$
Let $\beta:=\max_{1\leq j\leq m}\beta_j.$ Then, for any Orlicz function $\psi$ satisfying 
the condition $\psi(t)\leq c_1 e^{c_2 t^{1/\beta}}, t\geq 0$ for some constants $c_1,c_2>0,$ we have
$$
\|Y\|_{\psi}\lesssim_{\psi} A_1\vee \dots \vee A_m.
$$
\end{lemma}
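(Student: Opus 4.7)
The plan is to first collapse the piecewise-polynomial tail hypothesis into a single sub-exponential-type tail, and then reduce the Orlicz bound to controlling a moment generating function.

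First I would note that for $t\geq 1$ and each $j$, $t^{\beta_j}\leq t^{\beta}$, so that with $A:=A_1\vee \dots \vee A_m$,
\begin{equation*}
\P\{Y\geq A t^{\beta}\}\leq \P\{Y\geq A_1 t^{\beta_1}\vee \dots \vee A_m t^{\beta_m}\}\leq e^{-t},\qquad t\geq 1.
\end{equation*}
Introducing $W:=(Y/A)^{1/\beta}$, this becomes the clean sub-exponential tail $\P\{W\geq t\}\leq e^{-t}$ for $t\geq 1$ (and $\leq 1$ for all $t\geq 0$).

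Next I would use the hypothesis on $\psi$ together with the monotonicity of $\psi$. For any $C>0$,
\begin{equation*}
\psi(Y/C)\leq c_1 \exp\Bigl\{c_2 (Y/C)^{1/\beta}\Bigr\}=c_1 \exp\{\lambda W\},\qquad \lambda:=c_2 (A/C)^{1/\beta}.
\end{equation*}
Thus it suffices to choose $C$ so that $c_1\E\exp\{\lambda W\}\leq 1$. The standard tail integral identity (splitting at $t=1$ and using $\P\{W>t\}\leq 1$ for $t\leq 1$, $\P\{W>t\}\leq e^{-t}$ for $t>1$) gives, for $0<\lambda<1$,
\begin{equation*}
\E e^{\lambda W}=1+\lambda\int_0^{\infty}e^{\lambda t}\P\{W>t\}\,dt\leq e^{\lambda}+\frac{\lambda e^{\lambda-1}}{1-\lambda},
\end{equation*}
and the right-hand side tends to $1$ as $\lambda\to 0$. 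Hence there exists $\lambda_0=\lambda_0(c_1)\in(0,1)$ such that the right-hand side is $\leq 1/c_1$ whenever $\lambda\leq \lambda_0$. Taking $C:=c_2^{\beta}\lambda_0^{-\beta}A$ forces $\lambda\leq\lambda_0$, whence $\E\psi(Y/C)\leq 1$, so by definition of the Orlicz norm $\|Y\|_{\psi}\leq C\lesssim_{\psi}A=A_1\vee\dots\vee A_m$.

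There is no real obstacle here — the only thing one has to be slightly careful about is the bookkeeping of the constants $c_1,c_2,\beta$ inherited from $\psi$ when propagating them into $\lambda_0$ and then into $C$, so that the final constant in $\|Y\|_{\psi}\lesssim_{\psi} A$ depends only on $\psi$ (through $c_1,c_2,\beta$) and not on $Y,A_1,\dots,A_m,\beta_1,\dots,\beta_m$ individually. The MGF calculation itself is elementary once the tail of $W$ has been made uniform via the reduction $t^{\beta_j}\leq t^{\beta}$ in the first step.
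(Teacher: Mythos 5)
The paper states this lemma without proof (it is labeled ``simple''), so your argument must stand on its own. The reduction to a single tail ${\mathbb P}\{Y\geq At^{\beta}\}\leq e^{-t}$ with $A=A_1\vee\dots\vee A_m$, the substitution $W=(Y/A)^{1/\beta}$, and the moment generating function computation ${\mathbb E}e^{\lambda W}\leq e^{\lambda}+\frac{\lambda e^{\lambda-1}}{1-\lambda}$ are all correct. The flaw is in the final normalization. You need $c_1{\mathbb E}e^{\lambda W}\leq 1$ and claim that the right-hand side of your bound can be driven below $1/c_1$ by taking $\lambda\leq\lambda_0(c_1)$. But since $W\geq 0$ one always has ${\mathbb E}e^{\lambda W}\geq 1$ (and your upper bound is $\geq e^{\lambda}>1$), so no such $\lambda_0$ exists unless $c_1<1$. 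The lemma permits arbitrary $c_1>0$, and $c_1\geq 1$ is the generic case: for $\psi_{\alpha}(t)=e^{t^{\alpha}}-1$ the natural majorant is $e^{t^{\alpha}}$, i.e.\ $c_1=1$, and then your argument as written proves nothing.

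The repair is standard but does require an ingredient you omitted, namely convexity of $\psi$ together with $\psi(0)=0$, which gives $\psi(x/M)\leq M^{-1}\psi(x)$ for $M\geq 1$. Concretely: take $C_0:=(2c_2)^{\beta}A$, so that $\lambda=c_2(A/C_0)^{1/\beta}\leq 1/2$ and your computation gives ${\mathbb E}\psi(Y/C_0)\leq c_1\bigl(e^{1/2}+e^{-1/2}\bigr)=:M_{\psi}$; then ${\mathbb E}\psi\bigl(Y/((M_{\psi}\vee 1)C_0)\bigr)\leq 1$, hence $\|Y\|_{\psi}\leq (M_{\psi}\vee 1)(2c_2)^{\beta}A\lesssim_{\psi}A$. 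With this one extra line the proof is complete and the constant depends only on $c_1,c_2,\beta$, as required.
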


\begin{proposition}
Let $s=k+1+\rho$ for some $\rho\in (0,1]$ and let $\gamma\geq 0.$
Suppose that $f\in C^{s,\gamma}(E)$ and that ${\mathbb E}^{1/2}\|\xi\|^2\leq 1/2.$
If $\gamma\leq 1,$ then, for all $t\geq 1,$ with probability 
at least $1-e^{-t}$
\begin{align}
\label{conc_f_k_A_A}
|f_k(\theta+\xi)-f(\theta)|\lesssim_{\gamma} 
(k+1)^{\gamma/2}\|f\|_{C^{s,\gamma}}
(1\vee \|\theta\|)^{\gamma}\Bigl(\|\Sigma\|^{1/2}\sqrt{t}\vee ({\mathbb E}^{1/2}\|\xi\|^2)^s\Bigr).
\end{align}
If $\gamma>1,$ then, for all $t\geq 1,$ with the same probability
\begin{align}
\label{conc_f_k_B_B}
|f_k(\theta+\xi)-f(\theta)|\lesssim_{\gamma} 
(k+1)^{\gamma/2}\|f\|_{C^{s,\gamma}}
(1\vee \|\theta\|)^{\gamma}\Bigl(\|\Sigma\|^{1/2}\sqrt{t}\vee (\|\Sigma\|^{1/2}\sqrt{t})^{\gamma}\vee ({\mathbb E}^{1/2}\|\xi\|^2)^s\Bigr).
\end{align}
Moreover, for all Orlicz functions $\psi$ satisfying the condition 
$\psi(t)\leq c_1 e^{c_2 t^{2/(\gamma \vee 1)}}$ for all $t \geq 0$ and for some 
constants $c_1,c_2>0,$
the following bound holds:
\begin{align}
\label{orlicz_AAA}
\|f_k(\theta+\xi)-f(\theta)\|_{\psi}\lesssim_{\gamma,\psi} (k+1)^{\gamma/2} \|f\|_{C^{s, \gamma}}(1\vee \|\theta\|)^{\gamma}\Bigl(\|\Sigma\|^{1/2}\vee ({\mathbb E}^{1/2}\|\xi\|^2)^s\Bigr).
\end{align}
\end{proposition}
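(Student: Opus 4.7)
The plan is to decompose
\[
f_k(\theta+\xi)-f(\theta)
=\bigl[f_k(\theta+\xi)-\E f_k(\theta+\xi)\bigr]+\bigl[\E f_k(\theta+\xi)-f(\theta)\bigr],
\]
i.e.\ into a centered fluctuation and the bias ${\frak B}f_k(\theta)$. The first piece is exactly what was bounded in the preceding corollary: for $\gamma\leq 1$ we invoke \eqref{conc_f_k_A} and for $\gamma>1$ we invoke \eqref{conc_f_k_B}. Both hold on an event of probability at least $1-e^{-t}$ for every $t\geq 1$.

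For the bias term I would apply Theorem \ref{bias_control}, which controls $\|{\frak B}f_k\|_{L_{\infty,\gamma}}$. Since ${\mathbb E}^{1/2}\|\xi\|^2\leq 1/2$, Gaussian moment equivalence (Fernique/Borell) gives $\E^{1/2}\|\xi\|^{2\gamma}\lesssim_\gamma(\E^{1/2}\|\xi\|^2)^{\gamma}\leq (1/2)^{\gamma}$, so the two factors $(1+k^{\gamma/2}\E^{1/2}\|\xi\|^{2\gamma})$ and $(1+\E^{1/2}\|\xi\|^{2\gamma})$ collapse to $\lesssim_{\gamma}(k+1)^{\gamma/2}$. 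Converting the $L_{\infty,\gamma}$-bound to a pointwise estimate yields
\[
|{\frak B}f_k(\theta)|\lesssim_{\gamma}(k+1)^{\gamma/2}\|f\|_{C^{s,\gamma}}(1\vee\|\theta\|)^{\gamma}(\E^{1/2}\|\xi\|^2)^{s}.
\]
Adding this to \eqref{conc_f_k_A} (resp.\ \eqref{conc_f_k_B}) and using $\|f\|_{C^{k+1,\gamma}}\leq\|f\|_{C^{s,\gamma}}$ gives \eqref{conc_f_k_A_A} (resp.\ \eqref{conc_f_k_B_B}).

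For the Orlicz bound I would apply Lemma \ref{Orlicz} to
\[
Y:=\frac{|f_k(\theta+\xi)-f(\theta)|}{C_\gamma(k+1)^{\gamma/2}\|f\|_{C^{s,\gamma}}(1\vee\|\theta\|)^{\gamma}},
\]
whose tail, by what was just proved, has the form $A_1t^{1/2}\vee A_2t^{\gamma/2}\vee A_3$ with $A_1=\|\Sigma\|^{1/2}$, $A_2=\|\Sigma\|^{\gamma/2}$ (only present when $\gamma>1$), $A_3=(\E^{1/2}\|\xi\|^2)^{s}$. Since ${\mathbb E}\|\xi\|^2\leq 1/4<1$ we have $\|\Sigma\|^{\gamma/2}\leq\|\Sigma\|^{1/2}$ whenever $\gamma\geq 1$, so the relevant maximum reduces to $\|\Sigma\|^{1/2}\vee(\E^{1/2}\|\xi\|^2)^{s}$; the largest exponent among the $\beta_j$ is $\beta=(\gamma\vee 1)/2$. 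Lemma \ref{Orlicz} therefore gives the claimed Orlicz bound \eqref{orlicz_AAA} for any $\psi$ with $\psi(t)\leq c_1e^{c_2 t^{2/(\gamma\vee 1)}}$.

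The only real subtlety, and where I would spend the most care, is the bookkeeping in combining the bias and the fluctuation: one has to verify that the bias contribution is always absorbed into the $(\E^{1/2}\|\xi\|^2)^{s}$ term of the maximum, and that the $(\|\Sigma\|^{1/2}\sqrt{t})^{\gamma}$ summand in the $\gamma>1$ case does not enter the Orlicz norm (because $\|\Sigma\|<1$ makes $A_2\leq A_1$). Beyond that, the argument is essentially the triangle inequality combined with already-proved bounds.
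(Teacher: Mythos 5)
Your proof is correct and follows exactly the route the paper takes: decompose into the centered fluctuation (bounded by \eqref{conc_f_k_A} and \eqref{conc_f_k_B}) plus the bias (bounded via Theorem \ref{bias_control}), then convert the tail bound to an Orlicz-norm bound via Lemma \ref{Orlicz}, noting that $\|\Sigma\|\leq 1$ makes the $\|\Sigma\|^{\gamma/2}$ coefficient harmless. The bookkeeping details you flag (absorbing the constant bias term into the maximum for $t\geq 1$, and the exponent $\beta=(\gamma\vee 1)/2$ determining the admissible class of $\psi$) are handled as you describe.
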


\begin{proof}
The proof immediately follows from bounds \eqref{conc_f_k_A}, \eqref{conc_f_k_B}, Lemma \ref{Orlicz}  
and bound on the bias of Theorem \ref{bias_control}.

\qed
\end{proof}

Now it is easy to prove Theorem \ref{first_result} stated in Section \ref{Overview}.

\begin{proof}
If ${\mathbb E}^{1/2}\|\xi\|^2\leq 1/2,$ the result follows from bound \eqref{orlicz_AAA} (with $\psi(t)=t^2$).
Otherwise, it follows from the bound 
$
|f(\theta)| \leq \|f\|_{C^{s,\gamma}}(1\vee \|\theta\|)^{\gamma}.
$

\qed
\end{proof}

\begin{remark}
\normalfont
In the case when the functional $f:E\mapsto {\mathbb R}$ is a bounded polynomial of degree $k+1,$
estimator $f_k(X)$ is unbiased (see Corollary \ref{bias_polynomial}) and the following version 
of bound \eqref{orlicz_AAA} for $\psi(t)=t^2$ holds (the proof follows the same lines as the proof of
\eqref{orlicz_AAA} with minor modifications):
\begin{align}
\label{bd_poly}
{\mathbb E}_{\theta}(f_k(X)-f(\theta))^2\lesssim_{k} \|f\|_{\rm op}^2
(1\vee \|\theta\|)^{2k}\|\Sigma\|\Bigl(1\vee ({\mathbb E}\|\xi\|^2)^k\Bigr).
\end{align}
In the case of standard Gaussian shift model (see Example \ref{Example 1} in Section \ref{Intro}) and $f(\theta)=\|\theta\|^2$
(a polynomial of degree $2$),
it is easy to check that $f_1(X)= \|X\|^2 -\sigma^2 d.$ Then, bound \eqref{bd_poly} yields that
$$
\sup_{\|\theta\|\leq 1}{\mathbb E}_{\theta}(f_1(X)-f(\theta))^2\lesssim 
\sigma^2\Bigl(1\vee \sigma^2 d\Bigr),
$$
which could be also proved by elementary analysis. 
\end{remark}

The following proposition provides a concentration bound on the remainder $S_{f_{k}}(\theta;\xi)$
of Taylor expansion of function $f_k(\theta +\xi)$ (at point $\theta$). It will be used in the proof of the 
efficiency of estimators $f_k(X).$ 

\begin{proposition}
Let $s=k+1+\rho$ for some $\rho\in (0,1]$ and let $\gamma\geq 0.$
Suppose that $f\in C^{s,\gamma}(E)$ and that ${\mathbb E}^{1/2}\|\xi\|^2\leq 1/2.$
Then, for all $t\geq 1,$ with probability 
at least $1-e^{-t}$
\begin{align}
\label{conc_bound_f_k}
&
\nonumber
|S_{f_{k}}(\theta;\xi)-{\mathbb E}S_{f_{k}}(\theta;\xi)|
\\
&
\lesssim_{\gamma} 
(k+1)^{\gamma/2} \|f\|_{C^{s,\gamma}} 
(1\vee \|\theta\|\vee \|\Sigma\|^{1/2}\sqrt{t})^{\gamma}
(({\mathbb E}\|\xi\|)^{\rho} \vee (\|\Sigma\|^{1/2}\sqrt{t})^{\rho} \vee \|\Sigma\|^{1/2}\sqrt{t})\|\Sigma\|^{1/2}\sqrt{t}.
\end{align}
\end{proposition}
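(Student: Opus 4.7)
The plan is to apply Corollary \ref{conc_S_g} to the function $g = f_k$, which reduces the problem to estimating $\|f_k'\|_{{\rm Lip}_{\rho,\gamma}}$.

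First I would decompose $f_k' = \sum_{j=0}^{k}(-1)^{j}(\B^{j}f)'$ and estimate each summand. For $j = k$, inequality \eqref{B^kf'_norm} of Proposition \ref{B^kf'prop} directly bounds $\|(\B^{k}f)'\|_{{\rm Lip}_{\rho,\gamma}}$. For $1 \le j \le k-1$, Proposition \ref{B^kf'prop} supplies only a ${\rm Lip}_{1,\gamma}$-bound (from \eqref{B^jf'_norm}) together with an $L_{\infty,\gamma}$-bound (from \eqref{B^jf'L}); the case $j = 0, k \ge 1$ is treated analogously using $\|f'\|_{L_{\infty,\gamma}} \le \|f\|_{C^{s,\gamma}}$ and the mean value bound $\|f'\|_{{\rm Lip}_{1,\gamma}} \le \|f''\|_{L_{\infty,\gamma}} \le \|f\|_{C^{s,\gamma}}$; the isolated case $k = 0$ is immediate since $\|f'\|_{{\rm Lip}_{\rho,\gamma}} \le \|f\|_{C^{1+\rho,\gamma}}$ by hypothesis. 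To convert a simultaneous ${\rm Lip}_{1,\gamma}$ and $L_{\infty,\gamma}$ bound into a ${\rm Lip}_{\rho,\gamma}$ bound, I would use the elementary interpolation
$$
\|g\|_{{\rm Lip}_{\rho,\gamma}} \le \max\bigl(\|g\|_{{\rm Lip}_{1,\gamma}},\; 2\|g\|_{L_{\infty,\gamma}}\bigr),
$$
proved by splitting into the cases $\|x'-x''\| \le 1$ (use the Lipschitz bound together with $\|x'-x''\|^{1-\rho} \le 1$) and $\|x'-x''\| > 1$ (use $\|g(x')-g(x'')\| \le 2\|g\|_{L_{\infty,\gamma}}(1\vee\|x'\|\vee\|x''\|)^\gamma$ together with $\|x'-x''\|^{-\rho} < 1$).

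Summing the resulting estimates and invoking the hypothesis $\E^{1/2}\|\xi\|^2 \le 1/2$ to ensure that the geometric factors $(\E^{1/2}\|\xi\|^2)^j$ are absolutely summable, together with the routine simplification $(1 + j^{\gamma/2}\E^{1/2}\|\xi\|^{2\gamma}) \lesssim_\gamma (k+1)^{\gamma/2}$ (exactly as in the proof of \eqref{conc_f_k}), I would obtain
$$
\|f_k'\|_{{\rm Lip}_{\rho,\gamma}} \lesssim_\gamma (k+1)^{\gamma/2}\,\|f\|_{C^{s,\gamma}}.
$$
Plugging this into Corollary \ref{conc_S_g} yields, for all $t \ge 1$ with probability at least $1 - e^{-t}$,
\begin{align*}
&|S_{f_k}(\theta;\xi) - \E S_{f_k}(\theta;\xi)| \\
&\lesssim_\gamma (k+1)^{\gamma/2}\,\|f\|_{C^{s,\gamma}}\,
(1\vee\|\theta\|\vee\E\|\xi\|\vee\|\Sigma\|^{1/2}\sqrt{t})^\gamma\,
(\E\|\xi\|\vee\|\Sigma\|^{1/2}\sqrt{t})^\rho\,\|\Sigma\|^{1/2}\sqrt{t}.
\end{align*}
Since $\E\|\xi\| \le (\E\|\xi\|^2)^{1/2} \le 1/2 < 1$, one may drop the $\E\|\xi\|$ term inside the first maximum, and the trivial inequality $(\E\|\xi\|\vee\|\Sigma\|^{1/2}\sqrt{t})^\rho \le (\E\|\xi\|)^\rho \vee (\|\Sigma\|^{1/2}\sqrt{t})^\rho \vee \|\Sigma\|^{1/2}\sqrt{t}$ reproduces the statement of the proposition.

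The main obstacle is the mismatch between the ${\rm Lip}_{1,\gamma}$-regularity of the intermediate terms $(\B^j f)'$ for $0 \le j < k$ and the ${\rm Lip}_{\rho,\gamma}$-regularity demanded by Corollary \ref{conc_S_g}; the interpolation between the Lipschitz and $L_{\infty,\gamma}$ bounds is the key technical step that resolves this. Once past this point, the argument is essentially a bookkeeping combination of Proposition \ref{B^kf'prop} and Corollary \ref{conc_S_g}.
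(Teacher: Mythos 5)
Your proof is correct, but it takes a genuinely different route from the paper's. The paper splits $f_k=f_{k-1}+(-1)^k\B^k f$ and applies Corollary \ref{conc_S_g} twice: once to $f_{k-1}$, whose derivative is controlled only in ${\rm Lip}_{1,\gamma}$ via \eqref{B^jf'_norm} (so the corollary is invoked with exponent $1$ in place of $\rho$, which is exactly where the extra term $\|\Sigma\|^{1/2}\sqrt{t}$ in the inner maximum of \eqref{conc_bound_f_k} comes from), and once to $\B^k f$, whose derivative is controlled in ${\rm Lip}_{\rho,\gamma}$ via \eqref{B^kf'_norm}; the two high-probability bounds are then combined and the constants adjusted. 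You instead interpolate the ${\rm Lip}_{1,\gamma}$ and $L_{\infty,\gamma}$ bounds on $(\B^j f)'$, $j<k$, into a single ${\rm Lip}_{\rho,\gamma}$ bound on $f_k'$ and invoke Corollary \ref{conc_S_g} only once. Your interpolation inequality is valid (the case split at $\|x'-x''\|=1$ does the job), the reduction $\|f^{(j+1)}\|_{{\rm Lip}_{1,\gamma}}\le\|f^{(j+2)}\|_{L_{\infty,\gamma}}$ is legitimate because the segment joining $x'$ and $x''$ stays in the ball of radius $\|x'\|\vee\|x''\|$, and the geometric summation together with $\E^{1/2}\|\xi\|^{2\gamma}\lesssim_{\gamma}(\E^{1/2}\|\xi\|^{2})^{\gamma}\le 1$ gives $\|f_k'\|_{{\rm Lip}_{\rho,\gamma}}\lesssim_{\gamma}(k+1)^{\gamma/2}\|f\|_{C^{s,\gamma}}$ as claimed. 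The resulting factor $({\mathbb E}\|\xi\|)^{\rho}\vee(\|\Sigma\|^{1/2}\sqrt{t})^{\rho}$ in place of the three-term maximum is in fact slightly sharper than \eqref{conc_bound_f_k} (they differ only when $\|\Sigma\|^{1/2}\sqrt{t}>1$), hence implies it. Both arguments rest on the same ingredients, namely Proposition \ref{B^kf'prop} and Corollary \ref{conc_S_g}; yours trades the union bound over two events for an interpolation step and yields a marginally cleaner bound.
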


\begin{proof}
It follows from bounds \eqref{B^jf'_norm} of 
Proposition \ref{B^kf'prop} that
\begin{align*}
&
\|f_{k-1}'\|_{{\rm Lip}_{1,\gamma}}
\leq \|f'\|_{{\rm Lip}_{1,\gamma}}+ \sum_{j=1}^{k-1}\|({\mathcal B}^j f)'\|_{{\rm Lip}_{1,\gamma}} 
\\
&
 \leq \|f'\|_{{\rm Lip}_{1,\gamma}}+
2^{\gamma}\sum_{j=1}^{k-1}\|f^{(j+1)}\|_{{\rm Lip}_{1,\gamma}} 
(1+ j^{\gamma/2}{\mathbb E}^{1/2}\|\xi\|^{2\gamma}) 
({\mathbb E}^{1/2}\|\xi\|^2)^j
\\
&
\leq 
\|f''\|_{L_{\infty,\gamma}}+
2^{\gamma}\sum_{j=1}^{k-1}\|f^{(j+2)}\|_{L_{\infty,\gamma}} 
(1+ j^{\gamma/2}{\mathbb E}^{1/2}\|\xi\|^{2\gamma}) 
({\mathbb E}^{1/2}\|\xi\|^2)^j
\\
&
\leq 
\|f\|_{C^{k+1,\gamma}} \biggl(1+2^{\gamma}(1+ k^{\gamma/2}{\mathbb E}^{1/2}\|\xi\|^{2\gamma})
\sum_{j=1}^{k-1} ({\mathbb E}^{1/2}\|\xi\|^2)^j\biggr)
\\
&
\leq 
2^{\gamma+2}(1+ k^{\gamma/2}{\mathbb E}^{1/2}\|\xi\|^{2\gamma})\|f\|_{C^{k+1,\gamma}}.
\end{align*}
Using the  bound of Corollary \ref{conc_S_g}, we get that for all $t\geq 1$ with probability at least $1-e^{-t},$
\begin{align*}
&
|S_{f_{k-1}}(\theta;\xi)-{\mathbb E}S_{f_{k-1}}(\theta;\xi)|
\\
&
\lesssim 
2^{\gamma+2}\|f\|_{C^{k+1,\gamma}} (1+ k^{\gamma/2}{\mathbb E}^{1/2}\|\xi\|^{2\gamma})
(1\vee \|\theta\|\vee {\mathbb E}\|\xi\|\vee \|\Sigma\|^{1/2}\sqrt{t})^{\gamma}({\mathbb E}\|\xi\| \vee \|\Sigma\|^{1/2}\sqrt{t})\|\Sigma\|^{1/2}\sqrt{t}
\\
&
\lesssim_{\gamma} 
(k+1)^{\gamma/2} \|f\|_{C^{k+1,\gamma}} 
(1\vee \|\theta\|\vee \|\Sigma\|^{1/2}\sqrt{t})^{\gamma}({\mathbb E}\|\xi\| \vee \|\Sigma\|^{1/2}\sqrt{t})\|\Sigma\|^{1/2}\sqrt{t}.
\end{align*}
Similarly, using the bound of Corollary \ref{conc_S_g} along with bound \eqref{B^kf'_norm} of Proposition \ref{B^kf'prop}, we get that with probability 
at least $1-e^{-t}$
\begin{align*}
&
|S_{{\mathcal B}^k f}(\theta;\xi)-{\mathbb E}S_{{\mathcal B}^k f}(\theta;\xi)|
\\
&
\lesssim_{\gamma} 
(k+1)^{\gamma/2}\|f^{(k+1)}\|_{{\rm Lip}_{\rho,\gamma}} 
({\mathbb E}^{1/2}\|\xi\|^2)^k
(1\vee \|\theta\|\vee \|\Sigma\|^{1/2}\sqrt{t})^{\gamma}
({\mathbb E}\|\xi\| \vee \|\Sigma\|^{1/2}\sqrt{t})^{\rho}\|\Sigma\|^{1/2}\sqrt{t}.
\end{align*} 
Combining these bounds and adjusting the constants yield bound \eqref{conc_bound_f_k}.
\qed
\end{proof}


\section{Normal Approximation Bounds}
\label{sec:norm_appr}

In this section, we develop normal approximation bounds for $f_k(X)-f(\theta)$ needed to complete the proof of Theorem \ref{norm_appr_th_1}. 
More precisely, it will be shown that $f_k(X)-f(\theta)$ could be approximated by a mean zero normal random 
variable with variance 
$
\sigma_{f,\xi}^2 (\theta):= {\mathbb E}(f'(\theta)(\xi))^2 
=\langle \Sigma f'(\theta), f'(\theta)\rangle.
$
Recall that 
$$
K(f;\Sigma;\theta):=K_{s,\gamma}(f;\Sigma;\theta):=
\frac{\|f\|_{C^{s,\gamma}}(1\vee \|\theta\|)^{\gamma}\|\Sigma\|^{1/2}}{\sigma_{f,\xi}(\theta)}.
$$

\begin{theorem}
\label{norm_appr_CCCC}
Suppose, for some $s=k+1+\rho,$ $\rho\in (0,1]$ and some $\gamma\geq 0,$ $f\in C^{s,\gamma}(E).$
Suppose also that  ${\mathbb E}^{1/2}\|\xi\|^2\leq 1/2.$ Then, the following representation holds  
\begin{align}
\label{Z_R}
f_k(X) - f(\theta)
=\sigma_{f,\xi}(\theta) Z+ R,
\end{align}
where $Z$ is a standard normal random variable and $R$ is 
the remainder satisfying, for all $t\geq 1$ with probability at least $1-e^{-t},$ the bound
\begin{align}
\label{bd_R}
&
\nonumber
|R|
\lesssim_{\gamma} 
(k+1)^{\gamma/2} \|f\|_{C^{s,\gamma}} 
(1\vee \|\theta\|\vee \|\Sigma\|^{1/2}\sqrt{t})^{\gamma}
\\
&
\biggl(
({\mathbb E}^{1/2}\|\xi\|^2)^{\rho} \|\Sigma\|^{1/2}\sqrt{t}
\vee (\|\Sigma\|^{1/2}\sqrt{t})^{1+\rho} \vee \|\Sigma\|t\vee ({\mathbb E}^{1/2}\|\xi\|^2)^{s}
\biggr).
\end{align}
Moreover, for any Orlicz function $\psi$ such that 
$\psi(t)\lesssim  c_1 e^{c_2 t^{2/(2+\gamma)}}, t\geq 0$ 
for some constants $c_1, c_2>0,$
\begin{align}
\label{psi_remainder}
&
\nonumber
\biggl\|\frac{f_k(X) - f(\theta)}{\sigma_{f,\xi}(\theta)}-Z\biggr\|_{\psi}
\\
&
\lesssim_{\gamma,\psi}
(k+1)^{\gamma/2} K_{s,\gamma}(f;\Sigma;\theta)
\biggl(({\mathbb E}^{1/2}\|\xi\|^2)^{\rho}
\bigvee \frac{({\mathbb E}^{1/2}\|\xi\|^2)^s}{\|\Sigma\|^{1/2}}
\biggr).
\end{align}
\end{theorem}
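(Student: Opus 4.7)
\medskip
\noindent\emph{Proof plan for Theorem \ref{norm_appr_CCCC}.} The natural starting point is the decomposition
\[
f_k(X) - f(\theta) = \bigl[f_k(\theta+\xi) - \E f_k(\theta+\xi)\bigr] + \mathfrak B f_k(\theta).
\]
Applying the first-order Taylor expansion to $f_k$ and using $\E f_k'(\theta)(\xi) = 0$, the centered term equals $f_k'(\theta)(\xi) + \bigl(S_{f_k}(\theta;\xi) - \E S_{f_k}(\theta;\xi)\bigr)$. Since $\xi$ is Gaussian, $f_k'(\theta)(\xi)$ is centered Gaussian with variance $\sigma_{f_k,\xi}^2(\theta) = \langle \Sigma f_k'(\theta), f_k'(\theta)\rangle$, so I set $Z := f_k'(\theta)(\xi)/\sigma_{f_k,\xi}(\theta) \sim N(0,1)$ and obtain \eqref{Z_R} with
\[
R = \bigl(\sigma_{f_k,\xi}(\theta) - \sigma_{f,\xi}(\theta)\bigr) Z + \bigl(S_{f_k}(\theta;\xi) - \E S_{f_k}(\theta;\xi)\bigr) + \mathfrak B f_k(\theta).
\]

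\noindent Each of the three summands of $R$ is bounded separately. For the first, the triangle-type inequality $|\sigma_{f_k,\xi}(\theta) - \sigma_{f,\xi}(\theta)| \leq \|\Sigma\|^{1/2}\|f_k'(\theta) - f'(\theta)\|$ combined with $f_k' - f' = \sum_{j=1}^{k}(-1)^j(\B^j f)'$ and bound \eqref{B^jf'L} of Proposition \ref{B^kf'prop} produces, after summing the geometric series (using $\E^{1/2}\|\xi\|^2 \leq 1/2$),
\[
|\sigma_{f_k,\xi}(\theta) - \sigma_{f,\xi}(\theta)| \lesssim_\gamma (k+1)^{\gamma/2}\|f\|_{C^{s,\gamma}}(1\vee\|\theta\|)^\gamma \|\Sigma\|^{1/2}\E^{1/2}\|\xi\|^2,
\]
and multiplying by the Gaussian tail $|Z| \lesssim \sqrt{t}$ gives (since $\rho\le 1$ implies $\E^{1/2}\|\xi\|^2 \leq (\E^{1/2}\|\xi\|^2)^\rho$) a contribution of order $(\E^{1/2}\|\xi\|^2)^\rho\|\Sigma\|^{1/2}\sqrt t$. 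The second summand is exactly the object bounded by the concentration inequality \eqref{conc_bound_f_k}. The third is controlled by Theorem \ref{bias_control}, using the Gaussian moment bound $\E^{1/2}\|\xi\|^{2\gamma} \lesssim_\gamma (\E^{1/2}\|\xi\|^2)^\gamma$ to absorb the extraneous factors and arrive at $|\mathfrak Bf_k(\theta)| \lesssim_\gamma (k+1)^{\gamma/2}\|f\|_{C^{s,\gamma}}(1\vee\|\theta\|)^\gamma (\E^{1/2}\|\xi\|^2)^s$. A union bound (and the customary adjustment of constants to absorb a factor of $3$) then yields \eqref{bd_R}.

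\noindent For the Orlicz bound \eqref{psi_remainder}, I divide \eqref{bd_R} by $\sigma_{f,\xi}(\theta)$, which produces the scale-invariant factor $K_{s,\gamma}(f;\Sigma;\theta)$, and I expand the maxima into finitely many terms of the form $A_j t^{\beta_j}$. Since $(1\vee\|\theta\|\vee\|\Sigma\|^{1/2}\sqrt t)^\gamma \le (1\vee\|\theta\|)^\gamma + (\|\Sigma\|^{1/2}\sqrt t)^\gamma$, the largest exponent $\beta_j$ comes from the product $\|\Sigma\|t \cdot (\|\Sigma\|^{1/2}\sqrt t)^\gamma$ and equals $1+\gamma/2 = (2+\gamma)/2$. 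Hence Lemma \ref{Orlicz} (with $\beta = (2+\gamma)/2$) applies for any Orlicz function $\psi$ satisfying $\psi(t) \lesssim c_1 e^{c_2 t^{2/(2+\gamma)}}$, and the resulting $\psi$-norm is bounded by the maximum of the $A_j$; in the regime $\E^{1/2}\|\xi\|^2 \le 1/2$, this maximum reduces (modulo the geometric factors already accounted for) to the two surviving scales $(\E^{1/2}\|\xi\|^2)^\rho$ and $(\E^{1/2}\|\xi\|^2)^s/\|\Sigma\|^{1/2}$, giving \eqref{psi_remainder}.

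\noindent The main obstacle is bookkeeping: one has to enumerate the several tail scales $\sqrt t,\, t^{(1+\rho)/2},\, t,\, t^{(1+\gamma)/2},\, t^{(\gamma+1+\rho)/2},\, t^{1+\gamma/2}$ produced by combining the $\gamma$-th power polynomial growth with the three different ``sizes'' $\E\|\xi\|$, $\|\Sigma\|^{1/2}\sqrt t$, and $(\E^{1/2}\|\xi\|^2)^s$, and verify that each scales fits inside the envelope in \eqref{bd_R} and, after division by $\sigma_{f,\xi}(\theta)$, inside \eqref{psi_remainder}. This is routine but requires care so that no polynomial factor in $\E^{1/2}\|\xi\|^2$ weaker than $(\E^{1/2}\|\xi\|^2)^\rho$ slips through.
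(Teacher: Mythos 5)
Your proposal is correct and follows essentially the same route as the paper: the same decomposition of $R$ into the variance-mismatch term $(\sigma_{f_k,\xi}(\theta)-\sigma_{f,\xi}(\theta))Z$, the centered Taylor remainder $S_{f_k}(\theta;\xi)-\E S_{f_k}(\theta;\xi)$, and the bias $\mathfrak{B}f_k(\theta)$, each controlled by the same ingredients (bound \eqref{B^jf'L}, the concentration bound \eqref{conc_bound_f_k}, and Theorem \ref{bias_control}), with Lemma \ref{Orlicz} applied to the multi-scale tail bound whose top exponent $(2+\gamma)/2$ you correctly identified. No gaps.
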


\begin{remark}
\normalfont
Note that ${\mathbb E}\|\xi\|^2 = \|\Sigma\| {\bf r}(\Sigma),$ where ${\bf r}(\Sigma)$ 
is the effective rank of $\Sigma.$ Assume that $\|\Sigma\|$ is ``small" (that is, the noise level
is small) and, for some $\alpha\in (0,1),$
$
{\bf r}(\Sigma)\lesssim \|\Sigma\|^{-\alpha}.
$ 
Then 
$
{\mathbb E}\|\xi\|^2 \lesssim \|\Sigma\|^{1-\alpha},
$
which is ``small", too. Moreover, under the assumption that $s>\frac{1}{1-\alpha},$ 
$$
\frac{({\mathbb E}^{1/2}\|\xi\|^2)^{s}}{\|\Sigma\|^{1/2}}
\lesssim \sqrt{\frac{\|\Sigma\|^{s(1-\alpha)}}{\|\Sigma\|}}
=\sqrt{\|\Sigma\|^{s(1-\alpha)-1}}
$$
is also ``small", implying that the right hand side of bound \eqref{psi_remainder}
is ``small". The same conclusion holds for the right hand side of bound \eqref{norm_appr} 
provided that $K_{s,\gamma}(f;\Sigma;\theta)\lesssim 1.$ 
\end{remark}

\begin{remark}
\normalfont
We will also state (without providing a proof) the following bound on the risk of estimator $f_k(X)$ with 
respect to convex loss functions (under some constraints on their growth rate). 
Let $\ell : {\mathbb R}\mapsto {\mathbb R}_+$ be a loss function such that $\ell (-t)=\ell (t), t\in {\mathbb R},$ 
$\ell $ is an Orlicz function on ${\mathbb R}_+$ and, for some $\delta\in (0,1),$ 
$\nu<\frac{2}{\gamma \vee 1}$
\begin{align}
\label{assump_on_loss_ell}
\ell(t)\lesssim e^{(1-\delta)t^{\nu}}, t \geq 0.
\end{align}
Suppose also that 
\begin{align}
\label{cond_s_1}
\Bigl({\mathbb E}^{1/2}\|\xi\|^2\Bigr)^s\leq \|\Sigma\|^{1/2}. 
\end{align}
Then 
\begin{align}
\label{norm_loss_approx}
&
\nonumber
\biggl|{\mathbb E}\ell\biggl(\frac{f_k(X) - f(\theta)}{\sigma_{f,\xi}(\theta)}\biggr)-{\mathbb E}\ell(Z)\biggr|
\\
&
\lesssim_{\gamma,\ell,\delta}
(k+1)^{\gamma/2} K_{s,\gamma,\ell,k}(f;\Sigma;\theta)
\biggl(({\mathbb E}^{1/2}\|\xi\|^2)^{\rho}
\bigvee \frac{({\mathbb E}^{1/2}\|\xi\|^2)^s}{\|\Sigma\|^{1/2}}
\biggr),
\end{align}
where 
$$
K_{s,\gamma,\ell,k}(f;\Sigma;\theta):= 
(k+1)^{\gamma/2} K_{s,\gamma}(f;\Sigma;\theta)
\biggl(\ell \biggl(c_{\gamma,\nu} (k+1)^{\frac{\gamma}{2-(\gamma\vee 1) \nu}}K_{s,\gamma}^{\frac{1}{1-(\gamma\vee 1) \nu/2}}(f;\Sigma;\theta)\biggr)+1\biggr).
$$
\end{remark}

Bound \eqref{L_2_remainder} of Theorem \ref{norm_appr_th_1} follows from bound 
\eqref{psi_remainder} of Theorem \ref{norm_appr_CCCC} (for $\psi(t)=t^2$). 
We now turn to the proof of Theorem \ref{norm_appr_CCCC} and bound \eqref{norm_appr} of Theorem \ref{norm_appr_th_1}.

\begin{proof}
Clearly, 
\begin{align*}
&
f_k(X) - f(\theta)
\\
&
= f_k(X)-{\mathbb E}_{\theta}f_k(X)
+ {\mathbb E}_{\theta} f_k(X)-f(\theta)
\\
&
= f_k'(\theta)(\xi) + S_{f_k}(\theta;\xi)-{\mathbb E}S_{f_k}(\theta;\xi)
+ {\mathbb E}_{\theta} f_k(X)-f(\theta)
\\
&
=\sigma_{f_k, \xi}(\theta)Z + S_{f_k}(\theta;\xi)-{\mathbb E}S_{f_k}(\theta;\xi)
+ {\mathbb E}_{\theta} f_k(X)-f(\theta)
\\
&
=\sigma_{f,\xi}(\theta) Z+ R,
\end{align*}
where $Z$ is a standard normal random variable and 
\begin{align}
\label{represent_R}
&
R:=
(\sigma_{f_k, \xi}(\theta)-\sigma_{f,\xi}(\theta))Z + S_{f_k}(\theta;\xi)-{\mathbb E}S_{f_k}(\theta;\xi)
+ {\mathbb E}_{\theta} f_k(X)-f(\theta)
\end{align}
is the remainder.

The following lemma will be used to control $\sigma_{f_k,\xi}(\theta)-\sigma_{f,\xi}(\theta).$

\begin{lemma}
Suppose that, for some $\gamma\geq 0,$ $f\in C^{k+1,\gamma}(E)$ and ${\mathbb E}^{1/2}\|\xi\|^2\leq 1/2.$
Then 
\begin{align}
\label{compare_sigma}
&
|\sigma_{f_k,\xi}(\theta)-\sigma_{f,\xi}(\theta)| 
\lesssim_{\gamma}
(k+1)^{\gamma/2}\|f\|_{C^{k+1,\gamma}}(1\vee \|\theta\|)^{\gamma}
\|\Sigma\|^{1/2}
{\mathbb E}^{1/2}\|\xi\|^2.
\end{align}
\end{lemma}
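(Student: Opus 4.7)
The proof is essentially a linearity/triangle inequality argument combined with the derivative bound already established for $\mathcal{B}^j f$ in Proposition \ref{B^kf'prop}.

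First, I would observe that $\sigma_{g,\xi}(\theta) = \bigl(\mathbb{E}(g'(\theta)(\xi))^2\bigr)^{1/2}$ is the $L_2(\mathbb{P})$-norm of the linear functional $\xi \mapsto g'(\theta)(\xi)$, and this dependence on $g'(\theta)$ is itself a seminorm. Hence the reverse triangle inequality gives
$$
|\sigma_{f_k,\xi}(\theta)-\sigma_{f,\xi}(\theta)| \leq \sigma_{f_k-f,\xi}(\theta)
= \bigl(\mathbb{E}\langle (f_k-f)'(\theta),\xi\rangle^2\bigr)^{1/2}
\leq \|\Sigma\|^{1/2}\|(f_k-f)'(\theta)\|,
$$
where the last step uses $\mathbb{E}\langle u,\xi\rangle^2 = \langle \Sigma u,u\rangle \leq \|\Sigma\|\|u\|^2$ for $u\in E^\ast$.

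Next, from the definition $f_k = \sum_{j=0}^k (-1)^j \mathcal{B}^j f$, we get $f_k-f = \sum_{j=1}^k (-1)^j \mathcal{B}^j f$, so
$$
\|(f_k-f)'(\theta)\| \leq \sum_{j=1}^k \|(\mathcal{B}^j f)'(\theta)\|
\leq \sum_{j=1}^k \|(\mathcal{B}^j f)'\|_{L_{\infty,\gamma}}\,(1\vee\|\theta\|)^\gamma.
$$
Plugging in bound \eqref{B^jf'L} from Proposition \ref{B^kf'prop},
$$
\|(f_k-f)'(\theta)\| \leq 2^\gamma (1\vee\|\theta\|)^\gamma \|f\|_{C^{k+1,\gamma}}
\sum_{j=1}^k \bigl(1+j^{\gamma/2}\mathbb{E}^{1/2}\|\xi\|^{2\gamma}\bigr)\bigl(\mathbb{E}^{1/2}\|\xi\|^2\bigr)^j.
$$

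Finally I would bound the series. Since $\xi$ is Gaussian, $\mathbb{E}^{1/2}\|\xi\|^{2\gamma} \lesssim_\gamma (\mathbb{E}\|\xi\|^2)^{\gamma/2}$, which is bounded by a constant depending only on $\gamma$ under the hypothesis $\mathbb{E}^{1/2}\|\xi\|^2\leq 1/2$. Setting $x:=\mathbb{E}^{1/2}\|\xi\|^2\leq 1/2$ and factoring one $x$ out of the sum,
$$
\sum_{j=1}^k (1+j^{\gamma/2})\,x^j \;\leq\; x \sum_{j=0}^{\infty}(1+(j+1)^{\gamma/2})(1/2)^j \;\lesssim_\gamma\; x,
$$
which is trivially $\leq (k+1)^{\gamma/2} x$. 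Combining these estimates gives
$$
|\sigma_{f_k,\xi}(\theta)-\sigma_{f,\xi}(\theta)| \lesssim_\gamma (k+1)^{\gamma/2}\|f\|_{C^{k+1,\gamma}}(1\vee\|\theta\|)^\gamma \|\Sigma\|^{1/2}\mathbb{E}^{1/2}\|\xi\|^2,
$$
as required. There is no real obstacle here: the only subtlety worth emphasizing is the observation that $\sigma_{\cdot,\xi}(\theta)$ is itself a seminorm, which lets the triangle inequality reduce the problem to controlling $(f_k-f)'(\theta)$, after which Proposition \ref{B^kf'prop} does all the work.
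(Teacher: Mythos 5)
Your proof is correct and follows essentially the same route as the paper's: the reverse triangle inequality for the seminorm $g\mapsto\sigma_{g,\xi}(\theta)$, the decomposition $f_k-f=\sum_{j=1}^k(-1)^j\B^jf$, the bound \eqref{B^jf'L}, and summation of the geometric series using $\E^{1/2}\|\xi\|^2\leq 1/2$. The only (harmless) difference is that you absorb the $j^{\gamma/2}$ factors into the convergent series, which in fact yields the bound without the $(k+1)^{\gamma/2}$ prefactor.
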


\begin{proof}
Note that 
\begin{align*}
&
|\sigma_{f_k,\xi}(\theta)-\sigma_{f,\xi}(\theta)| \leq 
|\sigma_{f_k-f,\xi}(\theta)|
\leq \sum_{j=1}^k {\mathbb E}^{1/2} \Bigl|({\mathcal B}^j f)'(\theta)(\xi)\Bigr|^2
\\
&
=\sum_{j=1}^k 
\Bigl\langle \Sigma ({\mathcal B}^j f)'(\theta), ({\mathcal B}^j f)'(\theta)\Bigr\rangle^{1/2}
\leq \|\Sigma\|^{1/2}\sum_{j=1}^k \|({\mathcal B}^j f)'(\theta)\|.
\end{align*}
Using bound \eqref{B^jf'L}, we get 
\begin{align*} 
&
\nonumber
|\sigma_{f_k,\xi}(\theta)-\sigma_{f,\xi}(\theta)| 
\\
&
\nonumber
\leq 
2^{\gamma} \|\Sigma\|^{1/2}(1\vee \|\theta\|)^{\gamma}
\sum_{j=1}^k\|f^{(j+1)}\|_{L_{\infty,\gamma}} 
(1+ j^{\gamma/2}{\mathbb E}^{1/2}\|\xi\|^{2\gamma})
({\mathbb E}^{1/2}\|\xi\|^2)^j
\\
&
\nonumber
\leq 2^{\gamma} \|f\|_{C^{k+1,\gamma}}\|\Sigma\|^{1/2}(1\vee \|\theta\|)^{\gamma}
(1+ k^{\gamma/2}{\mathbb E}^{1/2}\|\xi\|^{2\gamma})
\sum_{j=1}^k ({\mathbb E}^{1/2}\|\xi\|^2)^j
\\
&
\nonumber
\leq 2^{\gamma+1} \|f\|_{C^{k+1,\gamma}}\|\Sigma\|^{1/2}(1\vee \|\theta\|)^{\gamma}
(1+ k^{\gamma/2}{\mathbb E}^{1/2}\|\xi\|^{2\gamma})
{\mathbb E}^{1/2}\|\xi\|^2
\\
&
\lesssim_{\gamma}
(k+1)^{\gamma/2}\|f\|_{C^{k+1,\gamma}}(1\vee \|\theta\|)^{\gamma}
\|\Sigma\|^{1/2}
{\mathbb E}^{1/2}\|\xi\|^2.
\end{align*}
\qed
\end{proof}

Bound \eqref{bd_R} follows from representation \eqref{represent_R}, Theorem \ref{bias_control}, bound \eqref{conc_bound_f_k} and bound 
\eqref{compare_sigma}.

We now prove bound \eqref{psi_remainder}.
We can easily deduce from \eqref{conc_bound_f_k} that:
\begin{align*}
&
\nonumber
|S_{f_{k}}(\theta;\xi)-{\mathbb E}S_{f_{k}}(\theta;\xi)|
\lesssim_{\gamma} A_1 t^{1/2} \vee A_2 t^{(1+\gamma)/2} \vee A_3 t^{(1+\rho)/2}
\vee A_4 t^{(1+\rho+\gamma)/2} \vee A_5 t \vee A_6 t^{(2+\gamma)/2},
\end{align*}
where 
\begin{align*}
&
A_1\asymp_{\gamma} (k+1)^{\gamma/2} \|f\|_{C^{s,\gamma}} 
(1\vee \|\theta\|)^{\gamma} ({\mathbb E}\|\xi\|)^{\rho}\|\Sigma\|^{1/2},
\\
&
A_2\asymp_{\gamma}(k+1)^{\gamma/2} \|f\|_{C^{s,\gamma}} 
({\mathbb E}\|\xi\|)^{\rho}\|\Sigma\|^{(1+\gamma)/2},
\\
&
A_3 \asymp_{\gamma} (k+1)^{\gamma/2} \|f\|_{C^{s,\gamma}} 
(1\vee \|\theta\|)^{\gamma} 
\|\Sigma\|^{(1+\rho)/2},
\\
&
A_4\asymp_{\gamma}(k+1)^{\gamma/2} \|f\|_{C^{s,\gamma}} 
\|\Sigma\|^{(1+\rho+\gamma)/2},
\\
&
A_5 \asymp_{\gamma}  (k+1)^{\gamma/2} \|f\|_{C^{s,\gamma}} 
(1\vee \|\theta\|)^{\gamma} \|\Sigma\|,
\\
&
A_6\asymp_{\gamma} (k+1)^{\gamma/2} \|f\|_{C^{s,\gamma}} 
\|\Sigma\|^{(2+\gamma)/2}. 
\end{align*}
Using Lemma \ref{Orlicz}, we conclude that, for any $\psi$ 
satisfying the condition $\psi(t)\leq c_1 e^{c_2 t^{2/(2+\gamma)}}, t\geq 0,$ 
we have
\begin{align*}
\Bigl\|S_{f_{k}}(\theta;\xi)-{\mathbb E}S_{f_{k}}(\theta;\xi)\Bigr\|_{\psi}
\lesssim_{\gamma,\psi} A_1\vee \dots \vee A_m.
\end{align*}
Using the fact that $\|\Sigma\|\leq {\mathbb E}\|\xi\|^2\leq 1,$ it is easy to check that
\begin{align*}
A_1\vee \dots \vee A_m\lesssim_{\gamma} (k+1)^{\gamma/2} \|f\|_{C^{s,\gamma}} 
(1\vee \|\theta\|)^{\gamma} ({\mathbb E}^{1/2}\|\xi\|^2)^{\rho}\|\Sigma\|^{1/2}.
\end{align*}
Thus,
\begin{align}
\label{psi_norm}
\Bigl\|S_{f_{k}}(\theta;\xi)-{\mathbb E}S_{f_{k}}(\theta;\xi)\Bigr\|_{\psi}
\lesssim_{\gamma,\psi}
 (k+1)^{\gamma/2} \|f\|_{C^{s,\gamma}} 
(1\vee \|\theta\|)^{\gamma} ({\mathbb E}^{1/2}\|\xi\|^2)^{\rho}\|\Sigma\|^{1/2}.
\end{align}
Using bound \eqref{compare_sigma}, we get
\begin{align*}
&
\Bigl\|(\sigma_{f_k,\xi}(\theta)-\sigma_{f,\xi}(\theta))Z\Bigr\|_{\psi} 
\lesssim_{\gamma}
(k+1)^{\gamma/2}\|f\|_{C^{k+1,\gamma}}(1\vee \|\theta\|)^{\gamma}
{\mathbb E}^{1/2}\|\xi\|^2 \|\Sigma\|^{1/2}\|Z\|_{\psi},
\end{align*}
which is dominated by the right hand side of \eqref{psi_norm}.
Thus, we can conclude that 
\begin{align*}
\|R\|_{\psi} \lesssim_{\gamma,\psi}
(k+1)^{\gamma/2} \|f\|_{C^{s,\gamma}} 
(1\vee \|\theta\|)^{\gamma} \Bigl(({\mathbb E}^{1/2}\|\xi\|^2)^{\rho}\|\Sigma\|^{1/2}
\bigvee ({\mathbb E}\|\xi\|)^s
\Bigr),
\end{align*}
implying bound \eqref{psi_remainder}.

To prove normal approximation bound \eqref{norm_appr}, we need the following elementary lemma.

\begin{lemma}
\label{eta_Z}
For random variables $\eta_1, \eta_2,$ 
denote 
$$
\Delta (\eta_1, \eta_2):=\sup_{x\in {\mathbb R}}
|{\mathbb P}\{\eta_1 \leq x\}-{\mathbb P}\{\eta_2\leq x\}| 
$$
and 
$$
\delta (\eta_1, \eta_2):= \inf_{\delta>0}\Bigl[{\mathbb P}\{|\eta_1-\eta_2|\geq \delta\}+\delta\Bigr].
$$
Then, for an arbitrary random variable $\eta$ and a standard normal random variable $Z,$
$$
\Delta (\eta,Z) \leq \delta(\eta;Z).
$$
\end{lemma}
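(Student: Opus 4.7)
The plan is to establish the standard two-sided sandwich inequality that controls the Kolmogorov distance in terms of both a probability of large deviation and a small buffer $\delta$, then optimize over $\delta$.

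First, I would fix $\delta > 0$ and an arbitrary $x \in \mathbb{R}$. The basic event-inclusion is $\{\eta \leq x\} \subseteq \{Z \leq x + \delta\} \cup \{|\eta - Z| > \delta\}$, which gives
\begin{align*}
{\mathbb P}\{\eta \leq x\} \leq {\mathbb P}\{Z \leq x + \delta\} + {\mathbb P}\{|\eta - Z| > \delta\}.
\end{align*}
Symmetrically, starting from $\{Z \leq x - \delta\} \subseteq \{\eta \leq x\} \cup \{|\eta - Z| > \delta\}$, we obtain
\begin{align*}
{\mathbb P}\{Z \leq x - \delta\} - {\mathbb P}\{|\eta - Z| > \delta\} \leq {\mathbb P}\{\eta \leq x\}.
\end{align*}

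Next I would use the fact that the density of the standard normal $Z$ is bounded by $\frac{1}{\sqrt{2\pi}} \leq 1$, so for every $x$ and every $\delta > 0$,
\begin{align*}
{\mathbb P}\{Z \leq x + \delta\} - {\mathbb P}\{Z \leq x\} \leq \delta, \quad {\mathbb P}\{Z \leq x\} - {\mathbb P}\{Z \leq x - \delta\} \leq \delta.
\end{align*}
Combining with the two event-inclusion bounds yields
\begin{align*}
|{\mathbb P}\{\eta \leq x\} - {\mathbb P}\{Z \leq x\}| \leq {\mathbb P}\{|\eta - Z| > \delta\} + \delta \leq {\mathbb P}\{|\eta - Z| \geq \delta\} + \delta.
\end{align*}
Since this holds for every $x$, we can take the supremum on the left, and since it holds for every $\delta > 0$, we can take the infimum on the right to conclude $\Delta(\eta, Z) \leq \delta(\eta; Z)$.

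There is no real obstacle here; this is a routine sandwich argument and the only ingredient beyond elementary probability is the bound $\frac{1}{\sqrt{2\pi}} \leq 1$ on the Gaussian density. The lemma will then be applied in the proof of \eqref{norm_appr} of Theorem \ref{norm_appr_th_1} by choosing $\eta = (f_k(X) - f(\theta))/\sigma_{f,\xi}(\theta)$, plugging in the tail bound from \eqref{bd_R} at an appropriately chosen level $t$, and optimizing the resulting expression in $\delta$.
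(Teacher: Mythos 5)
Your proof is correct and is exactly the standard sandwich argument (event inclusion plus the Lipschitz bound on the Gaussian CDF) that the paper implicitly relies on — the paper states this lemma as ``elementary'' and omits the proof entirely. Nothing is missing: the two inclusions, the bound $\frac{1}{\sqrt{2\pi}}\le 1$ on the normal density, and the final passage to the supremum in $x$ and infimum in $\delta$ are all valid.
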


We apply this lemma to random variable $\eta:=\frac{f_k(X) - f(\theta)}{\sigma_{f,\xi}(\theta)}.$ Using representation \eqref{Z_R} and bound \eqref{bd_R}, we get
that, for all $t\geq 1$ with probability at least $1-e^{-t}$
\begin{align*}
&
\biggl|\frac{f_k(X) - f(\theta)}{\sigma_{f,\xi}(\theta)}-Z\biggr|
\lesssim_{\gamma} 
(k+1)^{\gamma/2} K_{s,\gamma}(f;\Sigma;\theta) (1\vee \|\Sigma\|^{1/2}\sqrt{t})^{\gamma}
\\
&
\biggl(
({\mathbb E}^{1/2}\|\xi\|^2)^{\rho}\sqrt{t}
\bigvee \|\Sigma\|^{\rho/2}t^{(1+\rho)/2}\bigvee \|\Sigma\|^{1/2} t
\bigvee \frac{({\mathbb E}^{1/2}\|\xi\|^2)^{s}}{\|\Sigma\|^{1/2}}
\biggr). 
\end{align*}
Let $t:= \log\biggl(\frac{1}{\|\Sigma\|}\biggr).$ With this choice of $t,$
it is easy to see that 
$$
\|\Sigma\|^{1/2}\sqrt{t}\lesssim 1\ {\rm and}\ \|\Sigma\|^{1/2}t\lesssim  
\|\Sigma\|^{\rho/2}t^{(1+\rho)/2}.
$$
Thus, with probability at least $1-\|\Sigma\|,$
\begin{align*}
&
\biggl|\frac{f_k(X) - f(\theta)}{\sigma_{f,\xi}(\theta)}-Z\biggr|
\lesssim_{\gamma} 
(k+1)^{\gamma/2} K_{s,\gamma}(f;\Sigma;\theta)
\\
&
\biggl(
({\mathbb E}^{1/2}\|\xi\|^2)^{\rho}\sqrt{\log\biggl(\frac{1}{\|\Sigma\|}\biggr)}
\bigvee \|\Sigma\|^{\rho/2}\log^{(1+\rho)/2}\biggl(\frac{1}{\|\Sigma\|}\biggr)
\bigvee \frac{({\mathbb E}^{1/2}\|\xi\|^2)^{s}}{\|\Sigma\|^{1/2}}
\biggr). 
\end{align*}
It follows from Lemma \ref{eta_Z} that 
\begin{align*}
&
\Delta(\eta;Z)\leq \delta(\eta;Z) 
\lesssim_{\gamma} 
(k+1)^{\gamma/2} 
K_{s,\gamma}(f;\Sigma;\theta)
\\
&
\biggl(
({\mathbb E}^{1/2}\|\xi\|^2)^{\rho}\sqrt{\log\biggl(\frac{1}{\|\Sigma\|}\biggr)}
\bigvee \|\Sigma\|^{\rho/2}\log^{(1+\rho)/2}\biggl(\frac{1}{\|\Sigma\|}\biggr)
\bigvee \frac{({\mathbb E}^{1/2}\|\xi\|^2)^{s}}{\|\Sigma\|^{1/2}}
\biggr) + \|\Sigma\|. 
\end{align*}
Since also 
$$
\|\Sigma\|\leq \|\Sigma\|^{\rho/2}\log^{(1+\rho)/2}\biggl(\frac{1}{\|\Sigma\|}\biggr),
$$
we can conclude that 
\begin{align*}
&
\Delta(\eta;Z)
\lesssim_{\gamma} 
(k+1)^{\gamma/2} K_{s,\gamma}(f;\Sigma;\theta)
\\
&
\biggl(
({\mathbb E}^{1/2}\|\xi\|^2)^{\rho}\sqrt{\log\biggl(\frac{1}{\|\Sigma\|}\biggr)}
\bigvee \|\Sigma\|^{\rho/2}\log^{(1+\rho)/2}\biggl(\frac{1}{\|\Sigma\|}\biggr)
\bigvee \frac{({\mathbb E}^{1/2}\|\xi\|^2)^{s}}{\|\Sigma\|^{1/2}}
\biggr).
\end{align*}

\qed
\end{proof}


\section{The proof of efficiency: a lower bound}\label{Sec:Lowerbounds}

Our goal in this section is to prove Theorem \ref{min_lower_bd}.  
It will be convenient for our purposes to represent the noise as a sum of a series with 
i.i.d. standard normal coefficients. To this end, we use the following well known result.

\begin{theorem}[\cite{Kwapien}]
\label{theorem:Kwapien_Szymanski}
Let $\xi\in \EB$, $\xi \sim \mathcal{N}(0;\Sigma).$ There exists a sequence 
$\{g_{k}\}_{k \in \mathbb{N}}$ of i.i.d. standard normal random variables and a sequence 
$\{x_{k}\}_{k \in \mathbb{N}}$ in $\EB$ such that, for all $k\in \mathbb{N},$ 
$x_k\not\in \overline{\spann\{x_j:j\neq k\}},$
$
\xi = \sum_{k=1}^{\infty} x_{k}g_{k}
$
with the series in the right hand side converging in $\EB$ a.s., and 
$
\sum_{k=1}^{\infty} \|x_{k}\|^{2} <\infty.
$
\end{theorem}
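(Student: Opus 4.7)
The plan is to construct the expansion in three stages: identify the i.i.d.\ standard normals $g_k$ as an orthonormal basis of the first chaos of $\xi$; realize the coefficient vectors as Bochner integrals so that the series representation and a.s.\ convergence are automatic; and finally exploit the nuclearity of $\Sigma$ to secure both the $\ell^2$-summability of the $E$-norms of the $x_k$ and the strict linear independence condition.

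For the first two stages, set $H_1 := \overline{\{\langle \xi, u\rangle : u \in E^{\ast}\}}^{L^2(\mathbb P)}$, a closed Gaussian subspace of $L^2(\mathbb P)$. Since $E$ is separable, it admits a countable norming family in $E^{\ast}$, so $H_1$ is separable; pick any orthonormal basis $\{g_k\}$. Each $g_k$ is a centered Gaussian of unit variance, and $L^2$-orthogonality combined with joint Gaussianity upgrades to independence, so the $g_k$ are i.i.d.\ $\mathcal N(0,1)$. Define $x_k := \mathbb E[\xi g_k]$ as a Bochner integral in $E$, which is well posed by Cauchy--Schwarz and Fernique's theorem (\smash{$\mathbb E\|\xi g_k\|_E \le (\mathbb E\|\xi\|_E^2)^{1/2}<\infty$}). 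For every $u\in E^{\ast}$, the Parseval expansion of $\langle \xi, u\rangle\in H_1$ yields $\langle \xi, u\rangle = \sum_k \langle x_k, u\rangle g_k$ in $L^2(\mathbb P)$, and the It\^o--Nisio theorem upgrades this scalar $L^2$-convergence to a.s.\ norm convergence of the $E$-valued independent Gaussian sum $\sum_k x_k g_k$ to $\xi$ in $E$.

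The delicate point is the third stage. An arbitrary orthonormal basis of $H_1$ only gives $\sum_k \langle x_k, u\rangle^2 = \mathbb E\langle\xi,u\rangle^2\le \|\Sigma\|\|u\|^2$ for each fixed $u$, and since $\|x_k\|_E^2 = \sup_{\|u\|\le 1}\langle x_k, u\rangle^2$ involves a supremum over the $E^{\ast}$-ball that cannot be swapped with the sum, summability of $\sum_k\|x_k\|_E^2$ is not automatic. The crucial extra ingredient is that the covariance operator $\Sigma \colon E^{\ast}\to E$ of any Gaussian Radon measure on a separable Banach space is nuclear. Starting from a nuclear representation $\Sigma = \sum_\ell u_\ell^{\ast\ast}\otimes y_\ell$ with $\sum_\ell\|u_\ell^{\ast\ast}\|\,\|y_\ell\|<\infty$, one orthonormalizes the scalar Gaussians $u_\ell^{\ast\ast}(\xi)/\|u_\ell^{\ast\ast}\|$ in $L^2$ (Gram--Schmidt, possibly with preliminary splitting of each atomic term into smaller independent Gaussian blocks to accommodate the orthogonalization), and transfers the control on $E$-norms across this orthonormalization, producing an orthonormal basis $\{g_k\}$ of $H_1$ whose associated $x_k$ satisfy $\sum_k\|x_k\|_E^2\lesssim \sum_\ell\|u_\ell^{\ast\ast}\|\,\|y_\ell\|<\infty$.

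Finally, the strict linear independence $x_k\notin\overline{\spann\{x_j : j\ne k\}}^{\,E}$ is secured by a pruning/reorthonormalization cleanup: any $x_k$ lying in the $E$-closure of the span of the remaining $x_j$'s signals redundancy, and can be removed by replacing $\{g_k\}$ with a smaller orthonormal family in $H_1$ while preserving both the series representation of $\xi$ and the summability just established. The main obstacle throughout is the preceding step: translating the nuclear structure of $\Sigma$ into a Parseval-type expansion of $\xi$ with $\ell^2$-summable $E$-norms of the coefficients, a step which has no counterpart in the Hilbert-space setting, where any diagonalization of $\Sigma$ immediately gives the summability via the trace-class property.
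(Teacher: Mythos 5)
The paper states this result as a citation to Kwapie\'n and Szyma\'nski and gives no proof of its own, so you are on your own here. Your Stages 1 and 2 (first Wiener chaos, $x_k := \E[\xi g_k]$, It\^o--Nisio) are correct and standard, and you are right that the whole weight of the theorem sits on the $\ell^2$-summability $\sum_k\|x_k\|_E^2<\infty$. But your Stage 3 does not establish it. First, there is a type error at the very first move: in a nuclear representation $\Sigma=\sum_\ell u_\ell\otimes y_\ell$ of $\Sigma\colon E^{\ast}\to E$, the functionals $u_\ell$ live in $(E^{\ast})^{\ast}=E^{\ast\ast}$, so the scalar random variables ``$u_\ell(\xi)$'' you propose to orthonormalize are undefined (they would require $u_\ell\in E^{\ast}$). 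Second, and more fundamentally, the claim that Gram--Schmidt ``transfers the control on $E$-norms across this orthonormalization'' is precisely the content of the theorem, not a routine bookkeeping step: Gram--Schmidt on a nearly-dependent family can blow up coefficients arbitrarily, and the passage from $\sum_\ell\|u_\ell\|\|y_\ell\|<\infty$ to $\sum_k\|x_k\|_E^2<\infty$ for the resulting orthonormal family is asserted, not proved. The phrase ``preliminary splitting of each atomic term into smaller independent Gaussian blocks'' does not point to a recognizable technique, and in a general Banach space there is no parallelogram law to make $\sum_k\|x_k\|_E^2$ invariant under rotations of the orthonormal system, so the choice of orthonormal basis genuinely matters. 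This is exactly where the Kwapie\'n--Szyma\'nski argument has to do real work, and your sketch leaves the gap open.

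Your treatment of the separation condition $x_k\notin\overline{\spann\{x_j:j\ne k\}}^{E}$ is also not sound, though here the fix is easy. The proposed ``pruning'' step -- dropping any $g_k$ whose $x_k$ lies in the closed span of the others -- does not preserve the representation: if you remove an element of an orthonormal basis of the first chaos $H_1$, the remaining sum $\sum_{j\ne k}x_jg_j$ is no longer a.s.\ equal to $\xi$ unless $x_k=0$, and in fact no $x_k$ can vanish (if $\E[\xi g_k]=0$ then $g_k\perp\langle\xi,f\rangle$ in $L^2$ for every $f\in E^{\ast}$, forcing $g_k\perp H_1\ni g_k$, i.e.\ $\E g_k^2=0$). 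What you actually want to observe is that $x_k\notin\overline{\spann\{x_j:j\ne k\}}^{E}$ holds as soon as $g_k$ is a \emph{cylinder functional}, i.e.\ $g_k=\langle\xi,\phi_k\rangle$ for some $\phi_k\in E^{\ast}$: then $\langle x_j,\phi_k\rangle=\E[g_jg_k]=\delta_{jk}$, so $\phi_k$ separates $x_k$ from the closed span of the others. Starting from a dense sequence $\{\langle\xi,f_n\rangle\}$ in $H_1$ ($f_n\in E^{\ast}$) and applying Gram--Schmidt produces such cylinder $g_k$'s because each step takes finite linear combinations. So Gram--Schmidt is the right tool for the separation condition, not a tool for summability; the roles you assigned to Gram--Schmidt and pruning should be reversed, and the summability step still needs an actual argument tied to the Gaussian structure (e.g.\ the factorization of $\Sigma$ through the Cameron--Martin space), not the generic nuclearity of $\Sigma$ plus unexamined orthonormalization.
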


Clearly, $\overline{{\rm Im}(\Sigma)}=\overline{\spann\{x_j:j\in {\mathbb N}\}}.$
In the rest of this section, we provide the proof of Theorem \ref{min_lower_bd}. 

\begin{proof}
First, we will replace $\sigma^{2}_{f,\xi}(\theta)$ in the lower bound with $\sigma^{2}_{f,\xi}(\theta_0).$
To this end, we use the following simple lemma.

\begin{lemma}
\label{theta>>>theta_0}
For all $\theta\in E$ such that 
$$
\|\theta-\theta_0\|\leq c\|\Sigma\|^{1/2}< 1,
$$
the following bound holds:
\begin{align*}
\biggl|\frac{\sigma^{2}_{f,\xi}(\theta)}{\sigma^{2}_{f,\xi}(\theta_0)}-1\biggr|
\leq 2^{s+2\gamma} K^{2}_{s,\gamma}(f;\Sigma;\theta_0) c^{s-1}\|\Sigma\|^{(s-1)/2}.
\end{align*}
\end{lemma}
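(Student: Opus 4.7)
The plan is to exploit the quadratic structure of $\sigma_{f,\xi}^2$ via the semi-inner product $\langle u,v\rangle_\Sigma := \langle \Sigma u, v\rangle$, reducing everything to a Hölder control on $f'$ applied to the displacement $\theta - \theta_0$.

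Introducing the seminorm $\|u\|_\Sigma := \langle \Sigma u,u\rangle^{1/2}$, note $\sigma_{f,\xi}(\theta)=\|f'(\theta)\|_\Sigma$. First I would use the identity
\[
\sigma^2_{f,\xi}(\theta)-\sigma^2_{f,\xi}(\theta_0) = \langle \Sigma(f'(\theta)-f'(\theta_0)),\; f'(\theta)+f'(\theta_0)\rangle
\]
and Cauchy--Schwarz in the $\Sigma$-semi-inner product, combined with the triangle inequality $\|f'(\theta)\|_\Sigma \leq \sigma_{f,\xi}(\theta_0) + \|f'(\theta)-f'(\theta_0)\|_\Sigma$, to obtain
\[
\frac{|\sigma^2_{f,\xi}(\theta)-\sigma^2_{f,\xi}(\theta_0)|}{\sigma^2_{f,\xi}(\theta_0)} \leq \biggl(\frac{\|f'(\theta)-f'(\theta_0)\|_\Sigma}{\sigma_{f,\xi}(\theta_0)}\biggr)^2 + 2\,\frac{\|f'(\theta)-f'(\theta_0)\|_\Sigma}{\sigma_{f,\xi}(\theta_0)}.
\]

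Next, I would dominate $\|\cdot\|_\Sigma$ by $\|\Sigma\|^{1/2}\|\cdot\|$ and apply the Hölder estimate $f' \in \mathrm{Lip}_{s-1,\gamma}(E;{\mathcal M}_1(E;\mathbb{R}))$ (legitimate since $s\in(1,2]$ writes as $s=1+(s-1)$ with $s-1\in(0,1]$ and $f\in C^{s,\gamma}(E)$):
\[
\|f'(\theta)-f'(\theta_0)\| \leq \|f\|_{C^{s,\gamma}}\,(1\vee\|\theta\|\vee\|\theta_0\|)^\gamma\,\|\theta-\theta_0\|^{s-1}.
\]
Under the hypothesis $\|\theta-\theta_0\|\leq c\|\Sigma\|^{1/2}\leq 1$ we have $\|\theta\|\leq \|\theta_0\|+1$, hence $1\vee\|\theta\|\leq 2(1\vee\|\theta_0\|)$, which yields the weighted bound
\[
\|f'(\theta)-f'(\theta_0)\|_\Sigma \leq 2^\gamma\,\|f\|_{C^{s,\gamma}}\,(1\vee\|\theta_0\|)^\gamma\,c^{s-1}\|\Sigma\|^{(s-1)/2}\,\|\Sigma\|^{1/2}.
\]
Dividing by $\sigma_{f,\xi}(\theta_0)$ produces the key small quantity
\[
A := \frac{\|f'(\theta)-f'(\theta_0)\|_\Sigma}{\sigma_{f,\xi}(\theta_0)} \leq 2^\gamma K_{s,\gamma}(f;\Sigma;\theta_0)\,c^{s-1}\|\Sigma\|^{(s-1)/2}.
\]

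To finish, I would plug $A$ into the bound $A^2+2A$ and absorb the square using $c^{s-1}\|\Sigma\|^{(s-1)/2}\leq 1$ (because $c\|\Sigma\|^{1/2}\leq 1$ and $s-1\in(0,1]$) together with $K_{s,\gamma}\geq 1$; this leaves a single factor $c^{s-1}\|\Sigma\|^{(s-1)/2}$ multiplied by $K_{s,\gamma}^2$ times a constant of order $2^{2\gamma}+2^{\gamma+1}$, which is straightforward to dominate by $2^{s+2\gamma}$ after a routine bookkeeping of the powers of $2$. There is no real obstacle beyond careful constant-tracking; the one point that needs attention is ensuring that the $A^2$ term is correctly absorbed using the hypothesis $c\|\Sigma\|^{1/2}\leq 1$ so that its size does not exceed that of the linear term $2A$.
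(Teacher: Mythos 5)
Your argument is correct in substance and follows the same skeleton as the paper's proof: both start from the polarized difference $\langle \Sigma(f'(\theta)-f'(\theta_0)),\,f'(\theta)+f'(\theta_0)\rangle$, both invoke $\|f'\|_{{\rm Lip}_{s-1,\gamma}}\le \|f\|_{C^{s,\gamma}}$ on the increment, and both use $\|\theta-\theta_0\|\le 1$ to replace $(1\vee\|\theta\|\vee\|\theta_0\|)^{\gamma}$ by $2^{\gamma}(1\vee\|\theta_0\|)^{\gamma}$. The difference is in the second factor: the paper bounds $|\langle\Sigma u,v\rangle|\le\|\Sigma\|\|u\|\|v\|$ and controls $\|f'(\theta)\|+\|f'(\theta_0)\|$ by $2^{\gamma+1}\|f'\|_{L_{\infty,\gamma}}(1\vee\|\theta_0\|)^{\gamma}$, which produces $\|f\|_{C^{s,\gamma}}^2$ and hence $K^2_{s,\gamma}$ in one stroke with constant $2^{2\gamma+1}\le 2^{s+2\gamma}$; you instead use Cauchy--Schwarz in the $\Sigma$-semi-inner product plus the triangle inequality, arrive at $A^2+2A$, and then upgrade the linear term to $K^2_{s,\gamma}$ via $K_{s,\gamma}\ge 1$. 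Your route has the mild advantage of exhibiting the sharper first-order bound $2A\lesssim_{\gamma} K_{s,\gamma}\,c^{s-1}\|\Sigma\|^{(s-1)/2}$ before coarsening. One caveat on constants: your final prefactor is $2^{2\gamma}+2^{\gamma+1}$, and the inequality $2^{2\gamma}+2^{\gamma+1}\le 2^{s+2\gamma}$ fails for small $\gamma$ and $s$ near $1$ (e.g.\ $\gamma=0$ and $s<\log_2 3$ give $3>2^{s}$), so the ``routine bookkeeping'' does not quite deliver the exact constant claimed in the lemma. This is immaterial for the paper --- the lemma enters Theorem \ref{min_lower_bd} only up to the unspecified constant $D_{\gamma}$ --- but if you want the stated constant verbatim you should bound the second factor as the paper does, or simply restate the conclusion with a harmless $3\cdot 2^{2\gamma}$ in place of $2^{s+2\gamma}$.
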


\begin{proof}
Here and in what follows, denote $\rho:=s-1.$
We have 
\begin{align*}
&
\biggl|\frac{\sigma^{2}_{f,\xi}(\theta)}{\sigma^{2}_{f,\xi}(\theta_0)}-1\biggr|
=
\frac{\Bigl| \langle \Sigma f^{\prime}(\theta),f^{\prime}(\theta)\rangle-
\langle \Sigma f^{\prime}(\theta_0),f^{\prime}(\theta_0)\rangle
\Bigr|}{\sigma^{2}_{f,\xi}(\theta_0)}
\\
&
\leq
\frac{\|\Sigma\|\|f^{\prime}(\theta)-f^{\prime}(\theta_0)\|
(\|f^{\prime}(\theta)\|+\|f^{\prime}(\theta_0)\|)}{\sigma^{2}_{f,\xi}(\theta_0)} 
\\
&
\leq 
\frac{\|\Sigma\|\|f^{\prime}\|_{{\rm Lip}_{\rho,\gamma}}(1\vee \|\theta\|\vee \|\theta_0\|)^{\gamma}\|\theta-\theta_0\|^{\rho}
\|f^{\prime}\|_{L_{\infty,\gamma}}\Bigl((1\vee \|\theta\|)^{\gamma}+(1\vee \|\theta_0\|)^{\gamma}\Bigr)}{\sigma^{2}_{f,\xi}(\theta_0)}
\end{align*}
We then use the condition $\|\theta-\theta_0\|\leq 1$ to get 
$$
\|\theta\|\leq \|\theta_0\| + \|\theta-\theta_0\|\leq 2(1\vee \|\theta_0\|).
$$
Therefore, 
\begin{align*}
&
\biggl|\frac{\sigma^{2}_{f,\xi}(\theta)}{\sigma^{2}_{f,\xi}(\theta_0)}-1\biggr|
\leq 
\frac{2^{2\gamma+1}\|\Sigma\|\|f\|_{C^{s,\gamma}}^2(1\vee \|\theta_0\|)^{2\gamma}\|\theta-\theta_0\|^{\rho}}{\sigma^{2}_{f,\xi}(\theta_0)}
\\
&
\leq 2^{2\gamma+1} K^2(f;\Sigma;\theta_0) \|\theta-\theta_0\|^{\rho} 
\leq 2^{2\gamma+1+\rho} K^{2}(f;\Sigma;\theta_0) c^{\rho}\|\Sigma\|^{\rho/2},
\end{align*}
concluding the proof.
\QED
\end{proof}

The bound of Lemma \ref{theta>>>theta_0} implies that
\begin{align}
\label{theta_theta_0}
&
\nonumber
\sup_{\|\theta-\theta_0\|\leq c\|\Sigma\|^{1/2}}
\frac{\E_{\theta} (T(X)-f(\theta))^2}{\sigma^{2}_{f,\xi}(\theta)} 
=
\sup_{\|\theta-\theta_0\|\leq c\|\Sigma\|^{1/2}}
\frac{\E_{\theta} (T(X)-f(\theta))^2}{\sigma^{2}_{f,\xi}(\theta_0)}
\frac{\sigma^{2}_{f,\xi}(\theta_0)}{\sigma^{2}_{f,\xi}(\theta)}
\\
&
\nonumber
\geq 
\sup_{\|\theta-\theta_0\|\leq c\|\Sigma\|^{1/2}}
\frac{\E_{\theta} (T(X)-f(\theta))^2}{\sigma^{2}_{f,\xi}(\theta_0)}
\frac{1}{1+ \sup_{\|\theta-\theta_0\|\leq c\|\Sigma\|^{1/2}}\biggl|\frac{\sigma^{2}_{f,\xi}(\theta)}{\sigma^{2}_{f,\xi}(\theta_0)}-1\biggr|} 
\\
&
\geq 
\sup_{\|\theta-\theta_0\|\leq c\|\Sigma\|^{1/2}}
\frac{\E_{\theta} (T(X)-f(\theta))^2}{\sigma^{2}_{f,\xi}(\theta_0)}
\frac{1}{1+2^{s+2\gamma} K^{2}_{s,\gamma}(f;\Sigma;\theta_0) c^{s-1}\|\Sigma\|^{(s-1)/2}}. 
\end{align}

The rest of the proof is based on a finite-dimensional approximation and an application 
of van Trees inequality. 
For a fixed $N\in \mathbb{N},$ let 
\begin{equation*}
\label{def:LN*}
L_{N}\coloneqq \spann\{x_{1},\dots,x_{N}\}\subset \EB, 
\end{equation*}
and 
\begin{equation}
\label{def:xiN}
\xi_{N}\coloneqq \sum_{k=1}^{N} x_{k}g_{k}\in L_{N}, \
\xi_{N}^{\perp}\coloneqq \xi-\xi_{N}=\sum_{k>N}x_{k}g_{k}. 
\end{equation}
Clearly, random variables $\xi_N$ and $\xi_N^{\perp}$ are independent.

We define a linear mapping $A_{N}: \R^{N}\mapsto L_{N}$ such that, for all $(\alpha_{1},\dots,\alpha_{N})\in \R^{N},$
$
A_{N}(\alpha_{1},\dots,\alpha_{N})\coloneqq \sum_{k=1}^{N}\alpha_{k}x_{k}.
$
 Since $x_1,\dots, x_N$ are linearly independent vectors and $L_N$ is an $N$-dimensional 
 subspace of $E,$ $A_N$ is a bijection between the spaces $\R^{N}$ and $L_N$ with 
 inverse $A_N^{-1}: L_N\mapsto \R^N.$ 
 In what follows, $\R^N$ is viewed as a Euclidean space with canonical inner 
 product. Denote by $L_N^{*}\supset E^*$ the dual space of $L_N$ and let $A_N^{*}: L_N^{*}\mapsto \R^N$
 be the adjoint operator of $A_N.$  For $\alpha =(\alpha_1,\dots, \alpha_N)\in \R^N$
 and $u\in L_N^{*},$ we have 
 $$
 \langle \alpha, A_N^{*}u\rangle = \langle A_N \alpha,u\rangle
 =\sum_{j=1}^N \alpha_j \langle x_j,u\rangle,
 $$ 
 implying that 
 $
 A_N^{*}u=\Bigl(\langle x_j,u\rangle : j=1,\dots, N\Bigr).
 $
With some abuse of notation, we denote by $\langle \cdot, \cdot \rangle$ both 
the inner product of $\R^N$ (and other inner product spaces) and the action of a linear 
functional on a vector in a Banach space.  

Let $Z_{N}\coloneqq (g_{1},\dots,g_{N})\sim \mathcal{N}(0,I_{N}).$ Then  
$\xi_{N}=A_{N}Z_{N}.$ Denote by $\Sigma_N$ the covariance operator 
of $\xi_N:$
$
\Sigma_N u:= \E\langle \xi_N,u\rangle \xi_N, u\in L_N^*.
$
Then 
$$
\Sigma_N u = \sum_{j=1}^N \langle x_j,u\rangle x_j= A_N A_N^*u, u\in L_N^*, 
$$
implying that 
\begin{equation}
\label{def:sigma_xiN}
\Sigma_{N}=A_{N}A_{N}^{*}.
\end{equation} 
It is easy to check that 
$
\|\Sigma_N-\Sigma\|\to 0\ {\rm as}\ N\to\infty,
$
which follows from the bound 
$$
\|(\Sigma-\Sigma_N)u\|= \biggl\|\sum_{j>N+1} \langle x_j,u\rangle x_j\biggr\|
\leq \sum_{j>N+1} \|x_j\|^2 \|u\|, u\in E^*
$$
and the condition $\sum_{j\in {\mathbb N}}\|x_j\|^2<\infty.$ 
It is also easy to see that, for all $u\in E^*,$  $\langle \Sigma_N u,u\rangle$
monotonically converges to $\langle \Sigma u,u\rangle$
and that $\|\Sigma_N\|\leq \|\Sigma\|, N\geq 1.$

Since $\theta_0\in \overline{\spann\{x_j:j\in {\mathbb N}\}},$ there exists a sequence $\theta_{0,N}\in L_N$ such that $\theta_{0,N}\to \theta_0$ as $N\to\infty.$ Therefore,
$$
\sigma^{2}_{f,\xi_N}(\theta_{0,N}) = 
\langle \Sigma_N f^{\prime}(\theta_{0,N}), f^{\prime}(\theta_{0,N})\rangle
\to \langle \Sigma f^{\prime}(\theta_{0}), f^{\prime}(\theta_{0})\rangle
=\sigma^{2}_{f,\xi}(\theta_{0})\ {\rm as}\ N\to\infty.
$$
By a simple continuity argument, it also follows that 
$$
K(f;\Sigma_N;\theta_{0,N})\to K(f;\Sigma;\theta_0)\ {\rm as}\ N\to\infty.
$$
Thus, for all large enough $N,$
\begin{align*}
&
U(\theta_0;c;\Sigma):=
\Bigl\{\theta\in E: \|\theta-\theta_0\|\leq c\|\Sigma\|^{1/2}\Bigr\}
\\
&
\supset 
\Bigl\{\theta\in L_N: \|\theta-\theta_{0,N}\|\leq \frac{c}{2}\|\Sigma_N\|^{1/2}\Bigr\}
=:U_N(\theta_{0,N};c/2;\Sigma_N).
\end{align*}
Using a simple conditioning argument and Jensen's inequality, this implies 
\begin{align*}
&
\sup_{\theta \in U(\theta_0;c;\Sigma)}
\E_{\theta} (T(X)-f(\theta))^2 
\geq  
\sup_{\theta \in U_N(\theta_{0,N};c/2;\Sigma_N)}
\E_{\theta} (T(X)-f(\theta))^2 
\\
&
\geq 
\sup_{\theta\in U_N(\theta_{0,N};c/2;\Sigma_N)} 
\E\E\{(T(\theta+\xi_{N}+ \xi_{N}^{\perp})-f(\theta))^{2}\vert \xi_N\}
\\
&
\geq 
\sup_{\theta\in U_N(\theta_{0,N};c/2;\Sigma_N)} 
\E(\E\{T(\theta+\xi_{N}+ \xi_{N}^{\perp})\vert \xi_{N} \}-f(\theta))^{2}
\\&=
\sup_{\theta\in U_N(\theta_{0,N};c/2;\Sigma_N)} \E_{\theta}(\tilde{T}(X_{N})-f(\theta))^{2},
\end{align*}
where
\begin{align*}
X_{N}\coloneqq \theta+\xi_{N} \in L_{N}\text{ and } 
 \tilde{T}(x) \coloneqq \E T(x+\xi_N^{\perp}), x\in E. 
\end{align*}
Next, we get 
\begin{align}
\label{U_U_N}
&
\nonumber
\sup_{\theta \in U(\theta_0;c;\Sigma)}
\frac{\E_{\theta} (T(X)-f(\theta))^2}{\sigma^{2}_{f,\xi}(\theta_0)}
\\
&
\geq 
\sup_{\theta\in U_N(\theta_{0,N};c/2;\Sigma_N)}\frac{\E_{\theta}(\tilde{T}(X_{N})-f(\theta))^{2}}
{\sigma^{2}_{f,\xi_N}(\theta_{0,N})}
\frac{\sigma^{2}_{f,\xi_N}(\theta_{0,N})}{\sigma^{2}_{f,\xi}(\theta_0)}.
\end{align}
To bound 
$$
\sup_{\theta\in U_N(\theta_{0,N};c/2;\Sigma_N)}\frac{\E_{\theta}(\tilde{T}(X_{N})-f(\theta))^{2}}
{\sigma^{2}_{f,\xi_N}(\theta_{0,N})}
$$
from below, we will use the following lemma whose proof is based on an application 
of van Trees inequality (see \cite{GillLevit}).

\begin{lemma}
\label{apply_van_Trees}
Under the assumptions of Theorem \ref{min_lower_bd}, for some constant $D_{\gamma}'>0$ 
and for all large enough $N,$ the following bound holds for an arbitrary estimator $T(X_N):$
\begin{align*}
&
\sup_{\theta\in U_N(\theta_{0,N};c/2;\Sigma_N)}\frac{\E_{\theta}(T(X_{N})-f(\theta))^{2}}
{\sigma^{2}_{f,\xi_N}(\theta_{0,N})}
\\
&
\geq 
1-D_{\gamma}' K^{2}_{s,\gamma}(f;\Sigma_N;\theta_{0,N}) \Bigl(c^{s-1}\|\Sigma_N\|^{(s-1)/2}+\frac{1}{c^2}\Bigr). 
\end{align*}
\end{lemma}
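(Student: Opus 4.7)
My plan is to reduce to a one-parameter Gaussian submodel with optimally chosen direction, and apply the van Trees inequality to a smooth prior localized inside the allowed ball.

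Set $\phi:=f^{\prime}(\theta_{0,N})\in L_N^{\ast}$ and take the direction $h:=\Sigma_N\phi \in L_N$. For $t\in [-T,T]$ with $T:=c\|\Sigma_N\|^{1/2}/(2\|h\|)$, the one-parameter submodel $\theta_t:=\theta_{0,N}+th$ satisfies $\|\theta_t-\theta_{0,N}\|\leq c\|\Sigma_N\|^{1/2}/2$, hence $\theta_t\in U_N(\theta_{0,N};c/2;\Sigma_N)$. Passing to the transformed variable $Y_N:=A_N^{-1}(X_N-\theta_{0,N})=tu+Z_N$ with $u:=A_N^{-1}h=A_N^{\ast}\phi\in\R^N$, the model becomes a standard Gaussian shift on $\R^N$, with Fisher information
\begin{align*}
\mathcal{I}=\|u\|_{\R^N}^{2}=\langle A_NA_N^{\ast}\phi,\phi\rangle=\langle \Sigma_N\phi,\phi\rangle=\sigma_{f,\xi_N}^2(\theta_{0,N}).
\end{align*}
Writing $\psi(t):=f(\theta_t)$, one also has $\psi^{\prime}(0)=\phi(h)=\sigma_{f,\xi_N}^2(\theta_{0,N})$, so that $(\psi^{\prime}(0))^{2}/\mathcal{I}$ equals the efficient variance exactly. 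The choice $h=\Sigma_N\phi$ saturates the Cauchy--Schwarz bound $(\psi^{\prime}(0))^2\leq\mathcal{I}\cdot\sigma_{f,\xi_N}^2(\theta_{0,N})$ and is essentially the image in $L_N$ of the efficient influence function.

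Next I choose a fixed smooth symmetric density $\pi_0$ supported in $[-1,1]$ (e.g.\ $\pi_0(u)\propto\cos^2(\pi u/2)$) and set $\pi(t):=T^{-1}\pi_0(t/T)$, so $\mathcal{J}(\pi)=\mathcal{J}(\pi_0)/T^{2}$. The van Trees inequality \cite{GillLevit} yields
\begin{align*}
\sup_{\theta\in U_N(\theta_{0,N};c/2;\Sigma_N)}\E_{\theta}(T(X_N)-f(\theta))^{2}
\geq \frac{\bigl(\int\psi^{\prime}(t)\pi(t)dt\bigr)^{2}}{\mathcal{I}+\mathcal{J}(\pi)}.
\end{align*}
For $|t|\leq T$, the inequality $T\|h\|\leq c\|\Sigma_N\|^{1/2}/2\leq 1/2$ implies $(1\vee\|\theta_t\|)\leq 2(1\vee\|\theta_{0,N}\|)$, and the H\"older condition defining $\|f^{\prime}\|_{{\rm Lip}_{\rho,\gamma}}$ combined with the operator-norm bound $\|h\|=\|\Sigma_N\phi\|\leq\|\Sigma_N\|\cdot\|f\|_{C^{s,\gamma}}(1\vee\|\theta_{0,N}\|)^\gamma$ gives, upon dividing by $|\psi^{\prime}(0)|=\sigma_{f,\xi_N}^{2}(\theta_{0,N})$,
\begin{align*}
\epsilon_1:=\sup_{|t|\leq T}\frac{|\psi^{\prime}(t)-\psi^{\prime}(0)|}{|\psi^{\prime}(0)|}
\lesssim_{\gamma} K_{s,\gamma}^{2}(f;\Sigma_N;\theta_{0,N})\,c^{s-1}\|\Sigma_N\|^{(s-1)/2},
\end{align*}
and by the same operator-norm estimate, $\epsilon_2:=\mathcal{J}(\pi)/\mathcal{I}\lesssim K_{s,\gamma}^{2}(f;\Sigma_N;\theta_{0,N})/c^{2}$. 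Combining $(\int\psi^{\prime}\pi)^{2}\geq(\psi^{\prime}(0))^{2}(1-2\epsilon_1)$ with $\mathcal{I}/(\mathcal{I}+\mathcal{J}(\pi))\geq 1-\epsilon_2$ in the van Trees bound and dividing by $\sigma_{f,\xi_N}^2(\theta_{0,N})$ produces the claimed lower bound, with the constant $D_{\gamma}^{\prime}$ absorbing all absolute and $\gamma$-dependent factors.

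The main technical point is the choice $h=\Sigma_N f^{\prime}(\theta_{0,N})$: it is the unique direction (up to scaling) for which the van Trees lower bound reproduces the efficient variance rather than a strictly smaller quantity. The only place the Banach structure enters is the estimate $\|\Sigma_N\phi\|\leq\|\Sigma_N\|\|\phi\|$, which is what converts the raw bounds on $\epsilon_1$ and $\epsilon_2$ into the form involving $K_{s,\gamma}^2(f;\Sigma_N;\theta_{0,N})$ stated in the lemma.
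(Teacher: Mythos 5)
Your proof is correct and follows essentially the same route as the paper: a one-dimensional Gaussian submodel in the direction $\Sigma_N f^{\prime}(\theta_{0,N})$, the van Trees inequality with a prior rescaled to the admissible interval, and control of the perturbation of $\psi^{\prime}$ via the ${\rm Lip}_{\rho,\gamma}$ condition on $f^{\prime}$, yielding exactly the two error terms $K^{2}c^{s-1}\|\Sigma_N\|^{(s-1)/2}$ and $K^{2}/c^{2}$. The only (immaterial) difference is that the paper normalizes $h$ by $\sigma_{f,\xi_N}(\theta_{0,N})$ so that the Fisher information equals $1$, whereas you keep $h=\Sigma_N f^{\prime}(\theta_{0,N})$ and compensate through the length of the parameter interval.
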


To complete the proof of Theorem \ref{min_lower_bd}, use bounds \eqref{theta_theta_0},
\eqref{U_U_N} and the bound of Lemma \ref{apply_van_Trees} to get 
\begin{align*}
&
\nonumber
\sup_{\|\theta-\theta_0\|\leq c\|\Sigma\|^{1/2}}
\frac{\E_{\theta} (T(X)-f(\theta))^2}{\sigma^{2}_{f,\xi}(\theta)} 
\\
&
\nonumber
\geq 
\frac{1-D_{\gamma}' K_{s,\gamma}^{2}(f;\Sigma_N;\theta_{0,N}) \Bigl(c^{s-1}\|\Sigma_N\|^{(s-1)/2}+\frac{1}{c^2}\Bigr)}
{1+2^{s+2\gamma} K^{2}_{s,\gamma}(f;\Sigma;\theta_0) c^{s-1}\|\Sigma\|^{(s-1)/2}}
\frac{\sigma^{2}_{f,\xi_N}(\theta_{0,N})}{\sigma^{2}_{f,\xi}(\theta_0)}.
\end{align*}
Passing to the limit as $N\to\infty,$ we get
\begin{align*}
&
\nonumber
\sup_{\|\theta-\theta_0\|\leq c\|\Sigma\|^{1/2}}
\frac{\E_{\theta} (T(X)-f(\theta))^2}{\sigma^{2}_{f,\xi}(\theta)} 
\geq 
\frac{1-D_{\gamma}' K_{s,\gamma}^{2}(f;\Sigma;\theta_{0}) \Bigl(c^{s-1}\|\Sigma\|^{(s-1)/2}+\frac{1}{c^2}\Bigr)}
{1+2^{s+2\gamma} K^{2}_{s,\gamma}(f;\Sigma;\theta_0) c^{s-1}\|\Sigma\|^{(s-1)/2}}
\\
&
\nonumber
\geq 
1-(D_{\gamma}'+2^{s+2\gamma}) K_{s,\gamma}^{2}(f;\Sigma;\theta_{0}) \Bigl(c^{s-1}\|\Sigma\|^{(s-1)/2}+\frac{1}{c^2}\Bigr),
\end{align*}
implying the bound of Theorem \ref{min_lower_bd}.

\QED
\end{proof}

Finally, we prove Lemma \ref{apply_van_Trees}.

\begin{proof}
Let $c':= \frac{c}{K_{s,\gamma}(f;\Sigma_N;\theta_{0,N})}.$
For $t\in [-c'/2,c'/2],$ $\theta_{0,N}\in L_N$ and $h\in L_{N},$ define
\begin{equation*}
\theta_{t}\coloneqq \theta_{0,N}+ t h, \quad X_N\coloneqq  \theta_{t} + \xi_{N}.
\end{equation*}
Consider a problem of estimation of a function 
$$
\varphi(t)\coloneqq f(\theta_t), t\in [-c'/2,c'/2]
$$
based on an observation $X_N\sim {\mathcal N}(\theta_t,\Sigma_N), t\in [-c'/2,c'/2].$
Since $A_N: \R^N\mapsto L_N$ is a bijection, an equivalent problem is 
to estimate $\varphi(t)$ based on an observation 
\begin{equation*}
A_{N}^{-1}X\coloneqq A_{N}^{-1}\theta_{t} + Z_{N} \sim \mathcal{N}(A_{N}^{-1}\theta_{t};I_{N}).\end{equation*}
The Fisher information for the model $A_{N}^{-1}X \sim \mathcal{N}(A_{N}^{-1}\theta_{t};I_{N})$ with $t\in [-c'/2,c'/2]$ is equal to 
\begin{align*}
I(t)=I&= \inner{A_{N}^{-1}h}{A_{N}^{-1}h}.
\end{align*}
We will choose $h\coloneqq \frac{\Sigma_N f^{\prime}(\theta_{0,N})}{\sigma_{f,\xi_N}(\theta_{0,N})}.$
For this choice of $h,$ 
\begin{align*}
\frac{c'}{2}\|h\| &\leq 
\frac{(c'/2) \|\Sigma_N\| \|f^{\prime}(\theta_{0,N})\|}{\sigma_{f,\xi_N}(\theta_{0,N})}
\\
&
\leq 
\frac{c'}{2}\frac{\|f\|_{C^{s,\gamma}}(1\vee\|\theta_{0,N}\|)^{\gamma}\|\Sigma_N\|^{1/2}}{\sigma_{f,\xi_N}(\theta_{0,N})} \|\Sigma_N\|^{1/2}
\\
&
=
\frac{c'}{2}K_{s,\gamma}(f;\Sigma_N;\theta_{0,N})\|\Sigma_N\|^{1/2}=\frac{c}{2}\|\Sigma_N\|^{1/2}<1,
\end{align*}
implying that, for all large enough $N,$ $\theta_t\in U_N(\theta_{0,N};c/2;\Sigma_N), |t|\leq c'/2$
and, as a consequence,
\begin{align}
\label{>vanTrees}
&
\nonumber
\sup_{\theta\in U_N(\theta_{0,N};c/2;\Sigma_N)}\frac{\E_{\theta}(T(X_{N})-f(\theta))^{2}}
{\sigma^{2}_{f,\xi_N}(\theta_{0,N})}
\geq 
\sup_{t\in [-c'/2,c'/2]} \frac{\E_{t} (T(X_{N})-\varphi(t))^{2}}{\sigma^{2}_{f,\xi_N}(\theta_{0,N})}
\\
&
=
\sup_{t\in [-c'/2,c'/2]} \frac{\E_{t} (\hat T(A_N^{-1}X_{N})-\varphi(t))^{2}}{\sigma^{2}_{f,\xi_N}(\theta_{0,N})},
\end{align}
where $\hat T(x):=T(A_N x), x\in \R^N.$
We also have 
\begin{align*}
I &= \frac{\inner{A_{N}^{-1}\Sigma_N f^{\prime}(\theta_{0,N})}{A_{N}^{-1}\Sigma_N f^{\prime}(\theta_{0,N})}}{\sigma_{f,\xi_N}^2(\theta_{0,N})}
=\frac{\inner{A_{N}^{-1}A_N A_N^{*}f^{\prime}(\theta_{0,N})}{A_{N}^{-1}A_N A_N^{*} f^{\prime}(\theta_{0,N})}}{\sigma_{f,\xi_N}^2(\theta_{0,N})}
\\
&
=\frac{\inner{A_N^{*}f^{\prime}(\theta_{0,N})}{A_N^{*} f^{\prime}(\theta_{0,N})}}{\sigma_{f,\xi_N}^2(\theta_{0,N})}
=
\frac{\inner{A_N A_N^{*}f^{\prime}(\theta_{0,N})}{f^{\prime}(\theta_{0,N})}}{\sigma_{f,\xi_N}^2(\theta_{0,N})}
=\frac{\inner{\Sigma_N f^{\prime}(\theta_{0,N})}{f^{\prime}(\theta_{0,N})}}{\sigma_{f,\xi_N}^2(\theta_{0,N})}=1.
\end{align*}
Let $\pi$ be a prior density on $[-1,1]$ with $\pi(-1)=\pi(1)=0$ and such that 
$$
J_{\pi}:= \int_{-1}^1 \frac{(\pi^{\prime}(s))^2}{\pi(s)}ds <\infty.
$$
Denote $\pi_{c'}(t):= \frac{2}{c'}\pi\Bigl(\frac{2t}{c'}\Bigr), t\in [-c'/2,c'/2].$
Then $J_{\pi_{c'}}= \frac{4J_{\pi}}{(c')^2}.$

By van Trees inequality, for any estimator $\hat T(A_{N}^{-1}X_N)$ of $\varphi(t),$ it holds that 
\begin{align}
&
\nonumber
\sup_{t\in [-c'/2,c'/2]} \E_{t} (\hat T(A_N^{-1}X_{N})-\varphi(t))^{2}
\geq 
\int_{-c'/2}^{c'/2} \E_{t} (\hat T(A_N^{-1}X_{N}) -\varphi(t))^{2} \pi_{c'}(t)dt
\geq
\\
&
\geq \frac{\bigl(
\int_{-c'/2}^{c'/2}\varphi^{\prime}(t)\pi_{c'}(t)dt
\bigr)^{2}}
{
\int_{-c'/2}^{c'/2} I(t)dt+4J_{\pi}/(c')^{2}
}
\geq
\frac
{\bigl(
\int_{-c'/2}^{c'/2}\varphi^{\prime}(t)\pi_{c'}(t)dt
\bigr)^{2}}
{
1+4J_{\pi}/(c')^{2}
}.
\label{ineq:lowerdecomp}
\end{align}
It remains to bound from below 
$
\bigl(
\int_{-c'/2}^{c'/2}\varphi^{\prime}(t)\pi_{c'}(t)dt
\bigr)^{2}.
$
Note that
$
\varphi^{\prime}(t)=\inner{h}{f^{\prime}(\theta_{t})}
$
and let 
\begin{align*}
I_{0}&= \int_{-c'/2}^{c'/2}
\inner{h}{f^{\prime}(\theta_{0,N})}\pi_{c'}(t)dt
=\inner{h}{f^{\prime}(\theta_{0,N})},
\\
I_{1}&= \int_{-c'/2}^{c'/2}[\varphi^{\prime}(t)-\varphi^{\prime}(0)]\pi_{c'}(t)dt.
\end{align*}
We have 
\begin{align*}
&
\biggl(
\int_{-c'/2}^{c'/2}\varphi^{\prime}(t)\pi_{c'}(t)dt
\biggr)^{2}
\\
&
=(I_0 +I_1)^2
\geq I_0^2 -2|I_0||I_1|
\geq 
\inner{h}{f^{\prime}(\theta_{0,N})}^2 
-2|\inner{h}{f^{\prime}(\theta_{0,N})}||I_1|.
\end{align*}
With $h=\frac{\Sigma_N f^{\prime}(\theta_{0,N})}{\sigma_{f,\xi_N}(\theta_{0,N})},$
we get
\begin{align*}
\inner{h}{f^{\prime}(\theta_{0,N})}^2 
= \frac{\inner{\Sigma_Nf^{\prime}(\theta_{0,N})}{f^{\prime}(\theta_{0,N})}^2}{\sigma_{f,\xi_N}^2(\theta_{0,N})} = \sigma_{f,\xi_N}^2(\theta_{0,N})
\end{align*}
and 
\begin{align}
\label{low_bd_int_varphi}
&
\biggl(
\int_{-c'/2}^{c'/2}\varphi^{\prime}(t)\pi_{c'}(t)dt
\biggr)^{2}
\geq 
\sigma_{f,\xi_N}^2(\theta_{0,N})-2\sigma_{f,\xi_N}(\theta_{0,N})|I_1|.
\end{align}
Finally, we bound $|I_1|$ as follows.
Note that 
\begin{align*}
|\varphi^{\prime}(t)-\varphi^{\prime}(0)|
&=|\inner{h}{f^{\prime}(\theta_{t})- f^{\prime}(\theta_{0,N})}|
\\&\leq
 \|h\| \|f^{\prime}\|_{{\rm Lip}_{\rho,\gamma}}
 (1 \lor \|\theta_{0,N}\|\lor \|\theta_t\|)^{\gamma}
(c'/2)^{\rho}\|h\|^{\rho}
\\&\leq
 \|f\|_{C^{s,\gamma}}
 (1 \lor \|\theta_{0,N}\|\lor (\|\theta_{0,N}\|+(c'/2)\|h\|))^{\gamma}
(c'/2)^{\rho}\|h\|^{1+\rho}
\\&\leq
2^{\gamma-\rho}\|f\|_{C^{s,\gamma}}
 (1 \lor \|\theta_{0,N}\|)^{\gamma}
(c')^{\rho}\|h\|^{1+\rho},
\end{align*}
where we used the fact that $(c'/2)\|h\|\leq 1.$
It follows that 
\begin{align*}
 |I_{1}|&\leq  
2^{\gamma-\rho}\|f\|_{C^{s,\gamma}}
 (1 \lor \|\theta_{0,N}\|)^{\gamma}
(c')^{\rho}\|h\|^{1+\rho}
\\&\leq
2^{\gamma-\rho}
\frac{\|f\|_{C^{s,\gamma}}
 (1 \lor \|\theta_{0,N}\|)^{\gamma}}
 {\sigma_{f,\xi_N}^{1+\rho}(\theta_{0,N})}
(c')^{\rho}\|\Sigma_N\|^{1+\rho}\|f^{\prime}(\theta_{0,N})\|^{1+\rho}
\\&\leq
2^{\gamma-\rho}\sigma_{f,\xi_N}(\theta_{0,N})
\frac{\|f\|_{C^{s,\gamma}}
 (1 \lor \|\theta_{0,N}\|)^{\gamma}}
 {\sigma_{f,\xi_N}^{2+\rho}(\theta_{0,N})}
(c')^{\rho}\|\Sigma_N\|^{1+\rho}
\|f^{\prime}\|_{L_{\infty,\gamma}}^{1+\rho}(1\vee \|\theta_{0,N}\|)^{\gamma (1+\rho)}
\\&\leq
2^{\gamma-\rho}\sigma_{f,\xi_N}(\theta_{0,N})
\frac{\|f\|_{C^{s,\gamma}}^{2+\rho}
 (1 \lor \|\theta_{0,N}\|)^{\gamma (2+\rho)}\|\Sigma_N\|^{(2+\rho)/2}}
 {\sigma_{f,\xi_N}^{2+\rho}(\theta_{0,N})}
(c')^{\rho}\|\Sigma_N\|^{\rho/2}
\\&=
2^{\gamma-\rho}\sigma_{f,\xi_N}(\theta_{0,N})
K_{s,\gamma}^{2+\rho}(f;\Sigma_N;\theta_{0,N})
(c')^{\rho}\|\Sigma_N\|^{\rho/2}.
\end{align*}
We substitute this bound in \eqref{low_bd_int_varphi} to get
\begin{align}
\label{low_bd_int_varphi_final}
&
\nonumber
\biggl(
\int_{-c'/2}^{c'/2}\varphi^{\prime}(t)\pi_{c'}(t)dt
\biggr)^{2}
\\
&
\geq 
\sigma_{f,\xi_N}^2(\theta_{0,N})\Bigl(1-2^{\gamma+1-\rho}
K_{s,\gamma}^{2+\rho}(f;\Sigma_N;\theta_{0,N})
(c')^{\rho}\|\Sigma_N\|^{\rho/2}\Bigr).
\end{align}
Using bounds \eqref{>vanTrees}, \eqref{ineq:lowerdecomp} and \eqref{low_bd_int_varphi_final},
we conclude that 
\begin{align*}
&
\sup_{\theta\in U_N(\theta_{0,N};c/2;\Sigma_N)}\frac{\E_{\theta}(T(X_{N})-f(\theta))^{2}}
{\sigma^{2}_{f,\xi_N}(\theta_{0,N})}
\geq 
\frac
{
1-2^{\gamma+1-\rho}
K_{s,\gamma}^{2+\rho}(f;\Sigma_N;\theta_{0,N})
(c')^{\rho}\|\Sigma_N\|^{\rho/2}
}
{
1+4J_{\pi}/(c')^{2}
}
\\
&
\geq
1-2^{\gamma+1-\rho}
K_{s,\gamma}^{2}(f;\Sigma_N;\theta_{0,N})
c^{\rho}\|\Sigma_N\|^{\rho/2}
-\frac{4J_{\pi}K_{s,\gamma}^{2}(f;\Sigma_N;\theta_{0,N})}{c^2},
\end{align*}
implying the claim of the lemma.
\QED
\end{proof}

\section{The proof of minimax lower bound}\label{Sec:Optimal_Smoothness}
\label{nemirovski}

In this section, we use a modification of the approach developed by Nemirovski \cite{Nemirovski_1990, Nemirovski} to prove minimax lower bounds 
implying the optimality of smoothness thresholds for efficient estimation. 
This will be done only in the case of classical Gaussian shift model 
(see Example \ref{Example 1})
$$
X= \theta + \sigma Z,\ \theta \in {\mathbb R}^d,\ Z\sim {\mathcal N}(0,I_d)
$$
with unknown mean $\theta$ and known noise level $\sigma^2.$ 
The noise in this model is $\xi:= \sigma Z$ with covariance $\Sigma=\sigma^2 I_d,$ and the parameter 
space is the Euclidean space ${\mathbb R}^d$ with canonical 
inner product. Our main goal is to prove Theorem \ref{min_max_nemirovski} stated 
in Section \ref{Overview}. Our approach is based on a construction of 
a set $\Theta$ of $2^{d/8}$ $2\eps$-separated points of the unit ball in ${\mathbb R}^d$ and a set of 
smooth functionals $f_l(\theta), l=1,\dots , d.$ Assuming the existence of estimators 
$T_l(X), l=1,\dots, l$ of these functionals with mean squared error rate $\delta^2,$ we show 
that it is possible to estimate parameter $\theta\in \Theta$ with mean squared error 
$\lesssim \frac{\delta^2}{\eps^{2(s-1)}}.$ We compare this with well known minimax rates
of estimation of $\theta\in \Theta$ to prove a lower bound on $\delta^2.$ 

\begin{proof}
Let $h$ be the Hamming distance on the binary cube $\{-1,1\}^d:$
$$
h(\omega,\omega'):= \sum_{j=1}^d I(\omega_j\neq \omega_j'), \omega, \omega'\in \{-1,1\}^d.
$$
It follows from Varshamov-Gilbert bound (see \cite{Tsybakov}, Lemma 2.9)
that there exists a subset $\Omega\subset \{-1,1\}^d$ such that ${\rm card}(\Omega)\geq 2^{d/8}$
and $h(\omega, \omega')\geq d/8, \omega\neq \omega', \omega,\omega'\in  \Omega.$ 
For some $\eps \in (0,1/8),$ let
$$
\theta_{\omega}:= \frac{8\eps}{\sqrt{d}}(\omega_1,\dots, \omega_d), \omega\in \Omega
$$
and let $\Theta:= \{\theta_{\omega}: \omega \in \Omega\}.$
Note that $\|\theta_{\omega}\|=8\eps$ and 
\begin{align}
\label{dist_theta_omega}
\|\theta_{\omega}-\theta_{\omega'}\|= 16\eps \sqrt{\frac{h(\omega,\omega')}{d}},
\omega, \omega'\in \Omega,
\end{align}
which implies that, for all $\omega\neq \omega',$ 
$$
\|\theta_{\omega}-\theta_{\omega'}\|\geq \frac{8}{\sqrt{2}}\eps\geq 
2\eps.
$$

Let $\varphi : {\mathbb R} \mapsto [0,1]$ be a $C^{\infty}$ function 
with support in $[-1,1],$ with $\|\tilde \varphi\|_{C^s}\leq 1$ for 
$\tilde \varphi (t):=\varphi(\|t\|^2), t\in {\mathbb R}^d$ and $\varphi(0)>0$ being 
a constant.  
Define
$$
f_l(\theta):= \sum_{\omega\in \Omega} \omega_l 
\eps^s \tilde \varphi\biggl(\frac{\theta-\theta_\omega}{\eps}\biggr), \theta \in {\mathbb R}^d.
$$
Note that the functions $\eps^s \tilde \varphi\Bigl(\frac{\theta-\theta_\omega}{\eps}\Bigr), \omega\in \Omega$ have disjoint supports (since the function $\varphi\Bigl(\frac{\theta-\theta_\omega}{\eps}\Bigr)$ is supported 
in a ball of radius $\eps$ around $\theta_\omega$ and points $\theta_{\omega}, \omega \in \Omega$
are $2\eps$-separated). It follows that $f_l(\theta_\omega)= \omega_l \varphi(0)\eps^s, \omega\in \Omega, l=1,\dots, d$ and also that $\|f_l\|_{C^s}\leq 1$ (recall that $\|\tilde \varphi\|_{C^s}\leq 1$ and $\eps\leq 1/8$).

Define 
$$
\tau(\theta, \theta') := \biggl(\frac{1}{d}\sum_{l=1}^d (f_l(\theta)-f_l(\theta'))^2\biggr)^{1/2}, \theta, \theta'\in \Theta.
$$
We will need the following simple lemma:

\begin{lemma}
\begin{align}
\label{tau_norm}
\tau(\theta, \theta')= \frac{\varphi(0) \eps^{s-1}}{8} \|\theta-\theta'\|, \theta, \theta'\in \Theta.
\end{align}
\end{lemma}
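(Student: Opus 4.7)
The plan is to reduce the identity to a direct computation using two facts established earlier: (i) the bumps $\eps^s \tilde\varphi\bigl(\frac{\cdot - \theta_\omega}{\eps}\bigr)$ have pairwise disjoint supports on $\Theta$, so $f_l(\theta_\omega) = \omega_l\,\varphi(0)\,\eps^s$ for every $\omega\in\Omega$ and every $l$; and (ii) the explicit formula \eqref{dist_theta_omega} relating $\|\theta_\omega-\theta_{\omega'}\|$ to the Hamming distance $h(\omega,\omega')$.

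First, for $\theta=\theta_\omega$ and $\theta'=\theta_{\omega'}$ in $\Theta$, I would substitute (i) into the definition of $\tau$:
\begin{equation*}
\tau^2(\theta_\omega,\theta_{\omega'}) = \frac{\varphi(0)^2\,\eps^{2s}}{d}\sum_{l=1}^d (\omega_l-\omega'_l)^2.
\end{equation*}
Since $\omega_l,\omega'_l\in\{-1,1\}$, we have $(\omega_l-\omega'_l)^2 = 4\,\mathbbm{1}\{\omega_l\neq\omega'_l\}$, so the sum equals $4 h(\omega,\omega')$, giving
\begin{equation*}
\tau^2(\theta_\omega,\theta_{\omega'}) = \frac{4\,\varphi(0)^2\,\eps^{2s}\,h(\omega,\omega')}{d}.
\end{equation*}

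Next, I would invoke (ii), which rearranges to $\frac{h(\omega,\omega')}{d} = \frac{\|\theta_\omega-\theta_{\omega'}\|^2}{256\,\eps^2}$. Substituting this into the previous display yields
\begin{equation*}
\tau^2(\theta_\omega,\theta_{\omega'}) = \frac{4\,\varphi(0)^2\,\eps^{2s}}{256\,\eps^2}\,\|\theta_\omega-\theta_{\omega'}\|^2 = \frac{\varphi(0)^2\,\eps^{2(s-1)}}{64}\,\|\theta_\omega-\theta_{\omega'}\|^2,
\end{equation*}
and taking square roots gives \eqref{tau_norm}. There is no real obstacle here — the only things to double-check are the disjoint-support evaluation $f_l(\theta_\omega)=\omega_l\,\varphi(0)\,\eps^s$ (which requires $2\eps$-separation of $\Theta$, already established) and the algebraic identity $(\omega_l-\omega'_l)^2=4\,\mathbbm{1}\{\omega_l\neq\omega'_l\}$.
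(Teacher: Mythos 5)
Your proof is correct and follows the same route as the paper: both use the disjoint-support evaluation $f_l(\theta_\omega)=\omega_l\,\varphi(0)\,\eps^s$ to express $\tau$ in terms of the Hamming distance, and then substitute \eqref{dist_theta_omega} to convert to $\|\theta_\omega-\theta_{\omega'}\|$. You simply spell out the algebra the paper calls ``a straightforward computation.''
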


\begin{proof}
Indeed, for all $\omega, \omega'\in \Omega,$ we have by a straightforward computation 
that 
\begin{align*}
\tau(\theta_{\omega}, \theta_{\omega'})= 2\varphi(0) \eps^s \sqrt{\frac{h(\omega, \omega')}{d}}
\end{align*}
(this is based on the fact that $f_l(\theta_{\omega})=\varphi(0) \eps^s \omega_l$). 
Combining this with \eqref{dist_theta_omega} yields 
\begin{align*}
\tau(\theta_{\omega}, \theta_{\omega'})= \frac{\varphi(0) \eps^{s-1}}{8} \|\theta_{\omega}-\theta_{\omega'}\|, \omega, \omega'\in \Omega,
\end{align*}
which implies the claim.

\qed
\end{proof}

In addition, we will use the following well known fact:

\begin{lemma}
If $\eps^2 \leq c' \sigma^2 d$ for a small enough numerical constant $c'>0,$
then 
\begin{equation}
\label{min_max_AAA}
\inf_{\hat \theta}\max_{\theta \in \Theta}{\mathbb E}_{\theta}\|\hat \theta(X)-\theta\|^2
\geq c'' \sigma^2 d,
\end{equation}
where the infimum is taken over all estimators $\hat \theta$ and 
$c''$ is a numerical constant.
\end{lemma}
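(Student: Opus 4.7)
The plan is to prove \eqref{min_max_AAA} by applying Fano's inequality in its Tsybakov form (e.g.\ Theorem~2.5 of \emph{Introduction to Nonparametric Estimation}) to the packing $\Theta$. Two ingredients make this routine. First, under $P_{\theta_\omega}$ we have $X\sim\mathcal{N}(\theta_\omega,\sigma^2 I_d)$, so the pairwise KL divergence equals $\|\theta_\omega-\theta_{\omega'}\|^2/(2\sigma^2)$; combining with the trivial bound $\|\theta_\omega-\theta_{\omega'}\|\leq 16\eps$ coming from \eqref{dist_theta_omega} and $h(\omega,\omega')\leq d$, this gives $D_{\mathrm{KL}}(P_{\theta_\omega}\,\|\,P_{\theta_{\omega'}})\leq 128\eps^2/\sigma^2$. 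Second, $\log|\Theta|\geq (d/8)\log 2$ by Varshamov--Gilbert. Choosing $c'$ sufficiently small, the hypothesis $\eps^2\leq c'\sigma^2 d$ then forces $\max_{\omega,\omega'}D_{\mathrm{KL}}(P_{\theta_\omega}\,\|\,P_{\theta_{\omega'}})\leq \alpha\log|\Theta|$ for some prescribed $\alpha<1/8$, so Fano's inequality yields a minimax testing error bounded away from zero by a numerical constant $c_1>0$.

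To convert the testing lower bound into an MSE lower bound, I would fix an arbitrary estimator $\hat\theta(X)$ and introduce the minimum-distance test $\psi(X):=\argmin_{\omega\in\Omega}\|\hat\theta(X)-\theta_\omega\|$. The separation \eqref{dist_theta_omega} together with $h(\omega,\omega')\geq d/8$ shows that distinct points of $\Theta$ are $4\sqrt{2}\,\eps$-apart, so $\{\psi\neq\omega\}\subset\{\|\hat\theta(X)-\theta_\omega\|\geq 2\sqrt{2}\,\eps\}$, and Markov's inequality yields
\[
\max_{\omega\in\Omega}P_{\theta_\omega}\{\psi\neq\omega\}\leq \frac{1}{8\eps^2}\max_{\omega\in\Omega}\mathbb{E}_{\theta_\omega}\|\hat\theta(X)-\theta_\omega\|^2.
\]
Combining this with the Fano lower bound $\max_\omega P_{\theta_\omega}\{\psi\neq\omega\}\geq c_1$ gives $\max_\omega\mathbb{E}_{\theta_\omega}\|\hat\theta(X)-\theta_\omega\|^2\geq 8c_1\eps^2$. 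Evaluating at $\eps^2=c'\sigma^2 d$ (the maximum value permitted by the hypothesis) produces the claimed bound with $c''=8c_1 c'$.

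No step of this plan is genuinely hard; the only place that needs care is the joint choice of the constants $\alpha$, $c_1$, and $c'$ so that the Fano step and the Markov step close up simultaneously. The conceptual content of the lemma is simply that once $\eps^2\asymp \sigma^2 d$, the hypotheses $\theta_\omega$ remain $\gtrsim\sigma\sqrt{d}$-separated in $\ell_2$ while the corresponding Gaussian measures are statistically indistinguishable in the Fano sense.
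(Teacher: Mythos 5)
Your proof is correct and is exactly the route the paper intends: the paper omits the argument entirely, remarking only that it is "quite standard" and citing Theorem~2.5 of Tsybakov, and your Varshamov--Gilbert packing, KL computation, Fano step, and minimum-distance reduction from estimation to testing constitute precisely that standard argument. You also correctly observe that the argument really yields the bound $\gtrsim\eps^2$ (with the stated $c''\sigma^2 d$ form holding only when $\eps^2\asymp\sigma^2 d$), which matches the paper's own remark immediately after the lemma that the lower bound "could also be written as $c''\eps^2$."
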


The proof of this fact is quite standard 
(it could be based, for instance, on Theorem 2.5 in \cite{Tsybakov}).
Note that the lower bound could be also written as $c'' \eps^2$ for some 
numerical constant $c''.$

Suppose now that, for some $\delta>0,$ 
\begin{align}
\label{maxminmax}
\sup_{\|f\|_{C^s}\leq 1} \inf_{T} \sup_{\|\theta\|\leq 1} {\mathbb E}_{\theta}(T(X)-f(\theta))^2 < \delta^2. 
\end{align}
This implies that 
$$
\max_{1\leq l\leq d}\inf_{T} \max_{\theta \in \Theta} {\mathbb E}_{\theta}(T(X)-f_l(\theta))^2 < \delta^2
$$
and, moreover, for all $l=1,\dots, d$ there exist estimators $T_l(X)$ such that 
$$
\max_{\theta \in \Theta} {\mathbb E}_{\theta}(T_l(X)-f_l(\theta))^2 < \delta^2.
$$
It will be convenient to replace estimators $T_l(X)$ by estimators $\tilde T_l(X)$
defined as follows:
$\tilde T_l(X):= \eps^s \varphi(0)$ if $T_l(X)\geq 0$ and $\tilde T_l(X):= -\eps^s \varphi(0)$
otherwise. For these modified estimators, it is easy to check that  
\begin{align}
\label{bd_T_l}
\max_{\theta \in \Theta} {\mathbb E}_{\theta}(\tilde T_l(X)-f_l(\theta))^2 < 4\delta^2.
\end{align}
Define finally $\tilde \omega:= (\tilde \omega_1, \dots, \tilde \omega_d),$
where $\tilde \omega_l = \tilde \omega_l(X):={\rm sign}(\tilde T_l(X))$ and set 
$\tilde \theta=\tilde \theta (X):=\theta_{\tilde \omega}.$
The following identity immediately follows from the definitions and from \eqref{tau_norm}:
\begin{align*}
\|\tilde \theta-\theta\|= \frac{8}{\varphi(0)\eps^{s-1}}\tau(\tilde \theta, \theta)=
\frac{8}{\varphi(0)\eps^{s-1}}\biggl(\frac{1}{d}\sum_{l=1}^d (\tilde T_l(X)-f_l(\theta))^2\biggr)^{1/2}, 
\theta \in \Theta.
\end{align*}
Therefore, we can deduce from \eqref{bd_T_l}
\begin{align}
\label{buy_buy}
{\mathbb E}_{\theta}\|\tilde \theta-\theta\|^2  = \frac{8^2}{\varphi^2(0)\eps^{2(s-1)}}
\frac{1}{d} \sum_{l=1}^d {\mathbb E}_{\theta}(\tilde T_l(X)-f_l(\theta))^2
\leq \frac{4\ 8^2 \delta^2}{\varphi^2(0)\eps^{2(s-1)}}, \theta \in \Theta.
\end{align}
It remains to set $\eps^2 := c' (\sigma^2 d \wedge 1)$ and to use minimax lower bound \eqref{min_max_AAA}
to get 
\begin{align*}
\max_{\theta \in \Theta}{\mathbb E}_{\theta}\|\tilde \theta -\theta\|^2
\geq c'' \eps^2.
\end{align*}
Combining this with bound \eqref{buy_buy}, we get 
\begin{align*}
\frac{4\ 8^2 \delta^2}{\varphi^2(0)\eps^{2(s-1)}}
\geq c'' \eps^2,
\end{align*}
which implies that $\delta^2 \gtrsim \eps^{2s}.$ Therefore, 
\begin{align}
\label{maxminmax_BBB}
\sup_{\|f\|_{C^s}\leq 1} \inf_{T} \sup_{\|\theta\|\leq 1} {\mathbb E}_{\theta}(T(X)-f(\theta))^2 
\gtrsim (\sigma^2 d)^s \wedge 1. 
\end{align}

To complete the proof, it remains to show that, for some $c_2>0,$ the following bound holds:
\begin{align}
\label{bd_very_simple}
&
\sup_{\|f\|_{C^s}\leq 1} \inf_{T} \sup_{\|\theta\|\leq 1} {\mathbb E}_{\theta}(T(X)-f(\theta))^2 
\geq c_2 (\sigma^2 \wedge 1).
\end{align}
This easily follows from the bound of Theorem \ref{min_lower_bd}. To this end, take 
$f(\theta):= \langle \theta, u\rangle \varphi (\|\theta\|^2), \theta \in {\mathbb R}^d,$
where $\|u\|=\kappa$ for a small enough constant $\kappa>0$ and $\varphi : {\mathbb R}\mapsto [0,1]$ is a $C^{\infty}$
function with $\varphi(t)= 1, t\in [0,1]$ and $\varphi(t)= 0, |t|>2.$ It is easy to see that $u$ and $\varphi$
could be chosen in such a way that $\|f\|_{C^s}\leq 1.$ For such a function $f$ and for $\|\theta\|\leq 1,$
$\sigma_{f,\xi}^2(\theta)=\kappa^2\sigma^2$ and $K(f;\Sigma;\theta)\leq \frac{1}{\kappa}.$ 
Take also $\theta_0=0.$
The bound of Theorem \ref{min_lower_bd} now easily implies that, for small enough constants $c_3, c_4>0$
and for all $\sigma \leq c_3,$ 
\begin{align}
\label{bd_very_simple_A}
&
\inf_{T} \sup_{\|\theta\|\leq 1} {\mathbb E}_{\theta}(T(X)-f(\theta))^2 
\geq c_4 \sigma^2.
\end{align}
If $\sigma > c_3,$ then $\sigma^2 d\gtrsim c_3^2,$ and bound \eqref{maxminmax_BBB}
implies that, for some $c_4'>0,$ 
\begin{align*}
&
\inf_{T} \sup_{\|\theta\|\leq 1} {\mathbb E}_{\theta}(T(X)-f(\theta))^2 
\geq c_4'.
\end{align*}
Together with \eqref{bd_very_simple_A}, this implies \eqref{bd_very_simple}.

\qed
\end{proof}

{\bf Acknowledgement.} The authors are very thankful to Martin Wahl for careful reading of the paper and pointing out a number of typos and to anonymous referees for a number of useful suggestions.

\vskip 2mm

\end{document}